\documentclass[oneside,reqno,english]{amsart}
\usepackage[T1]{fontenc}
\usepackage[utf8]{inputenc}
\usepackage{xcolor}
\usepackage{babel}
\usepackage{prettyref}
\usepackage{amstext}
\usepackage{amsthm}
\usepackage{amssymb}
\usepackage[pdfusetitle,
 bookmarks=true,bookmarksnumbered=false,bookmarksopen=false,
 breaklinks=false,pdfborder={0 0 0},pdfborderstyle={},backref=false,colorlinks=false]
 {hyperref}
\hypersetup{
 colorlinks=true,citecolor=blue,linkcolor=blue,linktocpage=true}

\makeatletter
\numberwithin{equation}{section}
\numberwithin{figure}{section}

\usepackage{prettyref}

\newrefformat{cor}{Corollary~\ref{#1}}
\newrefformat{subsec}{Section~\ref{#1}}
\newrefformat{lem}{Lemma~\ref{#1}}
\newrefformat{thm}{Theorem~\ref{#1}}
\newrefformat{sec}{Section~\ref{#1}}
\newrefformat{chap}{Chapter~\ref{#1}}
\newrefformat{prop}{Proposition~\ref{#1}}
\newrefformat{exa}{Example~\ref{#1}}
\newrefformat{tab}{Table~\ref{#1}}
\newrefformat{rem}{Remark~\ref{#1}}
\newrefformat{def}{Definition~\ref{#1}}
\newrefformat{fig}{Figure~\ref{#1}}
\newrefformat{claim}{Claim~\ref{#1}}
\newrefformat{assu}{Assumption~\ref{#1}}

\makeatother

\theoremstyle{plain}
\newtheorem{thm}{\protect\theoremname}[section]
\theoremstyle{definition}
\newtheorem{defn}[thm]{\protect\definitionname}
\theoremstyle{plain}
\newtheorem{lem}[thm]{\protect\lemmaname}
\newtheorem{cor}[thm]{\protect\corollaryname}
\theoremstyle{remark}
\newtheorem{rem}[thm]{\protect\remarkname}
\theoremstyle{plain}
\newtheorem{prop}[thm]{\protect\propositionname}
\providecommand{\corollaryname}{Corollary}
\providecommand{\definitionname}{Definition}
\providecommand{\lemmaname}{Lemma}
\providecommand{\propositionname}{Proposition}
\providecommand{\remarkname}{Remark}
\providecommand{\theoremname}{Theorem}

\begin{document}
\subjclass[2020]{Primary: 46E22; Secondary: 31C20, 37C30, 47A20, 47B32, 60G42.}
\title[Tree Capacity and Splitting Isometries for Subinvariant Kernels]{Tree Capacity and Splitting Isometries for Subinvariant Kernels}
\begin{abstract}
Starting from a subinvariant positive definite kernel under a branching
pullback, we attach to the resulting kernel tower a canonical electrical
network on the word tree whose edge weights are the diagonal increments.
This converts diagonal growth into effective resistance and capacity,
giving explicit criteria and quantitative bounds, together with a
matching upper bound under a mild level regularity condition. When
the diagonal tower has finite limit at a point, we prove convergence
of the full kernels and obtain an invariant completion with a minimality
property. We also describe the associated RKHS splitting and a boundary
martingale construction leading to weighted invariant majorants.
\end{abstract}

\author{James Tian}
\address{Mathematical Reviews, 535 W. William St, Suite 210, Ann Arbor, MI
48103, USA}
\email{james.ftian@gmail.com}
\keywords{Subinvariant kernels, tree capacity, effective resistance, invariant
completion, splitting isometries, boundary martingales, reproducing
kernel Hilbert space.}

\maketitle
\tableofcontents{}

\section{Introduction}\label{sec:1}

Let $X$ be a set and let $\varphi_{1},\dots,\varphi_{m}:X\to X$
be given maps. A positive definite kernel $J$ on $X$ may be pulled
back along the family $\varphi=\left(\varphi_{i}\right)$ by 
\[
\left(LJ\right)\left(s,t\right):=\sum^{m}_{i=1}J\left(\varphi_{i}\left(s\right),\varphi_{i}\left(t\right)\right).
\]
Throughout we fix a positive definite kernel $K$ and assume the subinvariance
inequality 
\[
LK\ge K
\]
in the Loewner order. Starting from $K_{0}:=K$ we form the monotone
kernel tower 
\[
K_{n+1}:=LK_{n}\qquad\left(n\ge0\right),
\]
and we write $u_{n}\left(s\right):=K_{n}\left(s,s\right)$ for the
diagonal growth.

This paper is driven by a simple observation: the scalar increments
of the diagonal tower canonically generate a weighted rooted $m$-ary
tree, and that tree can be read as an electrical network with conductances
forced by the kernel dynamics. Concretely, fixing a basepoint $s\in X$
and a word $w$ of length $\left|w\right|$, we look at the one-step
diagonal increment at the point $\varphi_{w}\left(s\right)$ taken
at the matching depth, 
\[
a_{s}\left(w\right):=u_{\left|w\right|+1}\left(\varphi_{w}\left(s\right)\right)-u_{\left|w\right|}\left(\varphi_{w}\left(s\right)\right)\ge0,
\]
and we use these numbers to assign conductances to edges of the word
tree. The resulting network has no ambient geometry and no metric
assumptions built in; it is produced purely from positivity and the
tower recursion. The classical energy/capacity machinery on trees
then becomes a quantitative probe of how the kernel tower creates
diagonal mass across levels.

There are two main outputs. First, the network package collapses to
an explicit scalar obstruction. A telescoping identity shows that
the total increment mass at level $k$ is an alternating diagonal
difference at $s$, 
\[
\sum_{w\in W_{k}}a_{s}\left(w\right)=u_{2k+1}\left(s\right)-u_{2k}\left(s\right).
\]
Combining this with cutset energy lower bounds yields a universal
resistance estimate 
\[
R_{N}\left(s\right)\ge\sum^{N-1}_{k=0}\frac{m^{k}}{u_{2k+1}\left(s\right)-u_{2k}\left(s\right)}
\]
and hence an explicit one-sided criterion for vanishing capacity.
Under a mild level concentration hypothesis (a quantitative way of
ruling out extreme unevenness of the increments within a level), a
uniform splitting flow has energy within a fixed factor of the cutset
bound, giving two-sided estimates and an ``if and only if'' criterion
for positive limiting capacity in terms of the series 
\[
\sum^{\infty}_{k=0}\frac{m^{k}}{u_{2k+1}\left(s\right)-u_{2k}\left(s\right)}.
\]
In this sense, the diagonal tower determines an intrinsic tree capacity
at each basepoint, and the tree energy provides a sharp numerical
obstruction to positivity of the limiting capacity.

Second, diagonal control yields a canonical off-diagonal completion.
On the finiteness locus 
\[
X_{\mathrm{fin}}:=\left\{ s\in X:\ u_{\infty}\left(s\right):=\lim_{n\to\infty}u_{n}\left(s\right)<\infty\right\} ,
\]
positivity of the increments $K_{n+1}-K_{n}$ propagates diagonal
bounds to off-diagonal bounds, so the full tower $\left(K_{n}\left(s,t\right)\right)$
converges for every $s,t\in X_{\mathrm{fin}}$. The pointwise limit
\[
K_{\infty}\left(s,t\right):=\lim_{n\to\infty}K_{n}\left(s,t\right)
\]
is a positive definite kernel on $X_{\mathrm{fin}}$, satisfies $LK_{\infty}=K_{\infty}$
there, and is characterized by a minimality property: it is the smallest
$L$-invariant majorant of $K$ on $X_{\mathrm{fin}}$ in the kernel
order. This ``invariant completion'' is the natural fixed point
selected by the monotone tower once one restricts to points where
the diagonal does not diverge.

Two further structures are then extracted from the invariance $LK_{\infty}=K_{\infty}$.
On the Hilbert space side, invariance becomes an isometric splitting
on the reproducing kernel Hilbert space $\mathcal{H}\left(K_{\infty}\right)$,
which gives explicit word operators $\left(S_{w}\right)$ indexed
by the tree and Parseval identities of the form 
\[
\sum_{w\in W_{n}}S^{*}_{w}S_{w}=I.
\]
For the boundary, sampling along a random infinite word $\omega\in\Omega=\left\{ 1,\dots,m\right\} ^{\mathbb{N}}$
gives a martingale 
\[
M_{n}\left(s,t;\omega\right):=m^{n}K_{\infty}\left(\varphi_{\omega\mid n}\left(s\right),\varphi_{\omega\mid n}\left(t\right)\right),
\]
whose expectation recovers $K_{\infty}$. Under an explicit $L^{2}$
boundedness condition in terms of levelwise square sums of $u_{\infty}$
along the tree, the martingale converges to a boundary kernel $M_{\infty}$
satisfying an exact shift cocycle. The diagonal boundary factors $h\left(s;\omega\right):=M_{\infty}\left(s,s;\omega\right)$
dominate the off-diagonal boundary values pointwise, and they allow
one to build a large cone of positive definite kernels by weighting
and integrating: 
\[
J_{f}\left(s,t\right):=\int_{\Omega}f\left(\omega\right)M_{\infty}\left(s,t;\omega\right)\,d\mu\left(\omega\right).
\]
The cocycle identifies the action of $L$ on this cone by the left
shift on weights, so $\sigma$-invariant weights produce $L$-invariant
kernels, and the constant weight recovers $K_{\infty}$.

The earlier paper \cite{tian2026sub} focused on an invariant-majorant
perspective for subinvariant kernel dynamics and emphasized domination
and invariant completion in that setting. The new feature here is
the tree energy package driven \emph{directly} by diagonal increments:
the kernel tower canonically generates an electrical network whose
effective resistance and capacity encode how diagonal mass is created
across levels, leading to a sharp scalar obstruction and quantitative
comparison theorems. The boundary and weighting constructions in the
later sections are then organized as consequences of this new tree-based
structure and provide a flexible calculus of invariant majorants built
from boundary kernels.

In what follows, \prettyref{sec:2} constructs the canonical conductance
network and develops the resistance/capacity estimates and the limiting
criterion. \prettyref{sec:3} proves existence and minimality of the
invariant completion $K_{\infty}$ on $X_{\mathrm{fin}}$. \prettyref{sec:4}
derives the splitting isometry on $\mathcal{H}\left(K_{\infty}\right)$
and the associated word operators. \prettyref{sec:5}--\prettyref{sec:6}
develop the boundary martingale, cocycle identities, and diagonal
boundary factors, and \prettyref{sec:7} introduces weighted boundary
kernels and identifies the resulting cone of positive definite kernels
together with an explicit shift criterion for $L$-invariance and
the associated invariant-majorant comparisons. 

\subsection*{Related literature}

A few standard ingredients appear repeatedly. Basic positivity for
reproducing kernels and Gram matrices, together with monotone limits
in the Loewner order, provide the ambient kernel setting \cite{MR51437,MR2284176}.
Kernel positivity also leads naturally to extension/majorant questions
and to relative RKHS constructions \cite{MR3251728,MR3560890}. The
electrical language for trees and infinite networks (energy, effective
resistance, and capacity) gives the background for the resistance/capacity
numerics \cite{MR920811,MR1324344,MR3616205,MR4641241}, and related
resistance ideas occur throughout the Dirichlet form literature on
fractal type spaces \cite{MR1840042,MR2778606,MR2964679,MR4068276,MR4730446}.
Invariance relations such as $LK_{\infty}=K_{\infty}$ connect to
row isometric splittings and Wold type decompositions \cite{MR2416726,MR4651635},
and complete Pick and Drury-Arveson settings offer a parallel function-space
context in which positivity and column/row estimates control multiplier
structure \cite{MR4676341,MR3488033,MR4549696,MR4683372}. Sampling
along an infinite word produces the relevant martingale objects; boundary
representations, cocycle/shift identities, and Martin boundary heuristics
on trees are standard references for this part of the story \cite{MR1814344,MR1743100,MR284569,MR4031266,MR4412972,MR4388381,MR4730766}.
Although the present network is intrinsic (coming from kernel increments
rather than an ambient geometry), there is also a broad literature
relating analytic quantities on graphs to geometric constraints such
as curvature type conditions \cite{MR4190424} and to function space
and Carleson measure phenomena on homogeneous trees \cite{MR3490548,MR3545231,MR4232185}.
The point here is that the diagonal increments of the tower determine
a canonical conductance network, and the resulting tree energy numerics
feed directly into the invariant completion and the boundary weighting
calculus. 

\section{Tree capacity/energy package}\label{sec:2}

This section builds a tree-based capacity and energy framework directly
from the diagonal growth of the kernel tower. The network viewpoint
is classical, but the way it is driven here by a positive kernel is
the new feature: the successive increases in the diagonal supply all
edge weights on the tree, with no geometric assumptions on the underlying
space. This lets us treat the kernel iteration as a branching structure
and extract quantitative information about how mass spreads along
it. The resulting conductance and resistance estimates depend only
on these diagonal increments and lead to a scalar criterion that describes
 when the limiting capacity is positive.

We begin with a set of elementary estimates on the rooted $m$-ary
tree associated to the iterates of $\varphi=\left(\varphi_{i}\right)$,
expressed entirely in terms of the diagonals $u_{n}\left(s\right)=K_{n}\left(s,s\right)$.
The only other input is positivity and a subinvariance inequality
\prettyref{eq:2-2}.

Fix a set $X$ and maps $\varphi_{1},\dots,\varphi_{m}:X\to X$. For
$n\ge0$ define the word sets 
\[
W_{n}:=\left\{ 1,\dots,m\right\} ^{n},\qquad W_{*}:=\bigcup_{n\ge0}W_{n},
\]
with $W_{0}=\left\{ \emptyset\right\} $. For any $w=i_{1}\cdots i_{n}\in W_{n}$
set 
\[
\varphi_{w}:=\varphi_{i_{n}}\circ\cdots\circ\varphi_{i_{1}}.
\]

\begin{defn}
Let $\mathcal{K}\left(X\right)$ be the set of all positive definite
(p.d.) kernels on $X$. Given $\varphi$, the associated pullback
operator on $\mathcal{K}\left(X\right)$ is given by 
\begin{equation}
\left(LJ\right)\left(s,t\right):=\sum^{m}_{i=1}J\left(\varphi_{i}\left(s\right),\varphi_{i}\left(t\right)\right).\label{eq:2-1}
\end{equation}
\end{defn}

\begin{lem}
\label{lem:2-2}$L$ is order-preserving, i.e., $J_{1}\leq J_{2}\Rightarrow LJ_{1}\leq LJ_{2}$.
\end{lem}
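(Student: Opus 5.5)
The plan is to reduce the statement to the positivity of a single kernel. Recall that $J_{1}\le J_{2}$ in the Loewner order means that $D:=J_{2}-J_{1}$ is a positive definite kernel, i.e.\ $\sum_{a,b}\overline{c_{a}}c_{b}D(x_{a},x_{b})\ge0$ for all finite families of points and complex coefficients. Since $L$ is linear in $J$ (each term $J(\varphi_{i}(s),\varphi_{i}(t))$ is linear in $J$), we have $LJ_{2}-LJ_{1}=LD$. It therefore suffices to show that $L$ maps $\mathcal{K}(X)$ into itself: if $D$ is positive definite, then so is $LD$.

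For that step, I fix points $s_{1},\dots,s_{N}\in X$ and scalars $c_{1},\dots,c_{N}\in\mathbb{C}$ and expand
\[
\sum_{a,b=1}^{N}\overline{c_{a}}c_{b}\,(LD)(s_{a},s_{b})=\sum_{i=1}^{m}\Bigl(\sum_{a,b=1}^{N}\overline{c_{a}}c_{b}\,D\bigl(\varphi_{i}(s_{a}),\varphi_{i}(s_{b})\bigr)\Bigr).
\]
For each fixed $i$, the inner sum is the quadratic form of $D$ evaluated at the points $x_{a}:=\varphi_{i}(s_{a})$ with the same coefficients $c_{a}$, so it is nonnegative.

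The only subtle point is that the $x_{a}$ need not be distinct, because $\varphi_{i}$ need not be injective. I handle this by grouping equal points and summing their coefficients. Alternatively, one can note directly that positive definiteness as defined, with arbitrary finite sequences, allows repetitions, since $D(x_{a},x_{b})$ is simply a Gram-type matrix. A sum of $m$ nonnegative numbers is nonnegative, so $LD\in\mathcal{K}(X)$, and hence $LJ_{1}\le LJ_{2}$.

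I do not expect a real obstacle here. The only step needing a sentence of care is the possible non-injectivity of $\varphi_{i}$, and the repetition-allowed formulation of positive definiteness disposes of it. Alternatively, one could argue via Kolmogorov factorizations: write $D(x,y)=\langle k_{y},k_{x}\rangle$ in some Hilbert space, so that $LD(s,t)=\sum_{i}\langle k_{\varphi_{i}(t)},k_{\varphi_{i}(s)}\rangle$ is a Gram kernel in $\bigoplus_{i=1}^{m}\mathcal{H}$. I would mention this as a remark, since that viewpoint reappears later in the splitting isometries.
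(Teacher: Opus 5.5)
Your proposal is correct and follows the paper's proof essentially verbatim. Both show that $L$ maps $\mathcal{K}(X)$ into itself by splitting the quadratic form of $LD$ into $m$ quadratic forms of $D$ at the points $\varphi_{i}(s_{a})$, and then apply this to $D=J_{2}-J_{1}$ using linearity of $L$. Your remark about possibly coinciding points $\varphi_{i}(s_{a})$ adds harmless extra care that the paper leaves implicit.
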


\begin{proof}
For any $J\in\mathcal{K}\left(X\right)$, using \prettyref{eq:2-1},
\[
\sum_{\alpha,\beta}\overline{c_{\alpha}}c_{\beta}\left(LJ\right)\left(s_{\alpha},s_{\beta}\right)=\sum^{m}_{k=1}\sum_{\alpha,\beta}\overline{c_{\alpha}}c_{\beta}J\left(\varphi_{k}\left(s_{\alpha}\right),\varphi_{k}\left(s_{\beta}\right)\right)\geq0
\]
for all finite subsets $\left\{ \left(c_{\alpha},s_{\alpha}\right)\right\} \subset\mathbb{C}\times X$,
by positive definiteness of $J$. Thus $LJ$ is also in $\mathcal{K}\left(X\right)$.
Therefore, 
\[
J_{1}\leq J_{2}\Leftrightarrow J_{2}-J_{1}\in\mathcal{K}\left(X\right)\Rightarrow L\left(J_{2}-J_{1}\right)\in\mathcal{K}\left(X\right)\Leftrightarrow LJ_{1}\leq LJ_{2}.
\]
\end{proof}

\begin{defn}
Fix $K\in\mathcal{K}\left(X\right)$, and assume 
\begin{equation}
LK\ge K.\label{eq:2-2}
\end{equation}
Define the tower of p.d. kernels $\left(n\in\mathbb{N}_{0}:=\left\{ 0,1,2,\dots\right\} \right)$
\begin{align}
K_{0} & :=K,\quad K_{n+1}:=LK_{n}\label{eq:2-3}
\end{align}
and the diagonals 
\begin{equation}
u_{n}\left(s\right):=K_{n}\left(s,s\right)\label{eq:2-4}
\end{equation}
for all $s\in X$.
\end{defn}

\begin{cor}
The following hold:
\begin{enumerate}
\item $K_{n+1}\ge K_{n}$, $n\in\mathbb{N}_{0}$;
\item $u_{n+1}\left(s\right)\ge u_{n}\left(s\right)$, $n\in\mathbb{N}_{0}$,
$s\in X$. 
\end{enumerate}
\end{cor}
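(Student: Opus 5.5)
The plan is to prove (1) by induction on $n$, using that $L$ is order-preserving (\prettyref{lem:2-2}), and then to obtain (2) from (1) by evaluating on the diagonal.

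For the base case $n=0$, the claim $K_{1}\ge K_{0}$ reads $LK\ge K$. This is exactly the subinvariance hypothesis \prettyref{eq:2-2}.

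For the inductive step, assume $K_{n+1}\ge K_{n}$ for some $n\ge0$, that is, $K_{n+1}-K_{n}\in\mathcal{K}\left(X\right)$. Applying \prettyref{lem:2-2} with $J_{1}=K_{n}$ and $J_{2}=K_{n+1}$ gives $LK_{n}\le LK_{n+1}$. By the recursion \prettyref{eq:2-3} this is $K_{n+1}\le K_{n+2}$, which closes the induction. Each $K_{n}$ lies in $\mathcal{K}\left(X\right)$, again by \prettyref{lem:2-2}, since $L$ maps $\mathcal{K}\left(X\right)$ into itself.

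For (2), fix $s\in X$ and test the positive definite kernel $K_{n+1}-K_{n}$ on the single point $s$ with coefficient $c=1$:
\[
0\le\left(K_{n+1}-K_{n}\right)\left(s,s\right)=u_{n+1}\left(s\right)-u_{n}\left(s\right),
\]
using \prettyref{eq:2-4}. The same one-point test shows that each $u_{n}\left(s\right)$ is real and nonnegative, so the inequality is between real numbers and makes sense.

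No step is a real obstacle. The only point needing care is that the Loewner order $J_{1}\le J_{2}$ means $J_{2}-J_{1}\in\mathcal{K}\left(X\right)$, and that \prettyref{lem:2-2} is applied in exactly that form.
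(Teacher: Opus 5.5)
Your proof is correct and follows the paper's argument. Part (1) goes by induction from the subinvariance hypothesis and the order-preservation of $L$ (Lemma~\ref{lem:2-2}), and part (2) evaluates the positive definite increment $K_{n+1}-K_{n}$ on the diagonal. The only difference is cosmetic: you get $(K_{n+1}-K_{n})(s,s)\ge0$ directly from the one-point quadratic form, whereas the paper writes it as the squared RKHS norm $\Vert D_{n}(\cdot,s)\Vert^{2}_{H_{D_{n}}}$, so your version is slightly more elementary.
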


\begin{proof}
By \prettyref{lem:2-2}, the assumption \prettyref{eq:2-2} implies
that $K_{n+1}\ge K_{n}$ for all $n$. The pointwise claim may be
verified as follows: set $D_{n}=K_{n+1}-K_{n}$, so $D_{n}\in\mathcal{K}\left(X\right)$
and $D_{n}\left(s,s\right)=u_{n+1}\left(s\right)-u_{n}\left(s\right)$
by \prettyref{eq:2-4}. Then $D_{n}\left(s,s\right)=\left\langle D_{n}\left(\cdot,s\right),D_{n}\left(\cdot,s\right)\right\rangle _{H_{D_{n}}}\geq0$
where $H_{D_{n}}$ is the reproducing kernel Hilbert space of $D_{n}$.
\end{proof}

The inequalities above show that the diagonal values $u_{n}(s)$ form
an increasing sequence along the tower. To convert this scalar information
into a geometric object, we now attach these increments to the rooted
$m$-ary tree generated by the maps $\varphi_{i}$. The construction
produces a canonical network whose edge weights encode how much new
diagonal mass is created at each step of the tower.
\begin{defn}
Fix a basepoint $s\in X$. Consider the rooted $m$-ary tree with
vertex set $W_{*}$, root $\emptyset$, and edges $\left(w,wi\right)$
for $w\in W_{*}$ and $i\in\left\{ 1,\dots,m\right\} $. For each
node $w\in W_{*}$, let 
\begin{equation}
a_{s}\left(w\right):=u_{\left|w\right|+1}\left(\varphi_{w}\left(s\right)\right)-u_{\left|w\right|}\left(\varphi_{w}\left(s\right)\right)\ge0,\label{eq:2-5}
\end{equation}
and for each edge $e=\left(w,wi\right)$, define the conductance 
\begin{equation}
c_{s}\left(e\right):=\frac{a_{s}\left(w\right)}{m^{\left|w\right|+1}}.\label{eq:2-6}
\end{equation}
\end{defn}

\begin{rem}
Equation \prettyref{eq:2-5} is the one-step diagonal increment at
the point $\varphi_{w}\left(s\right)$, taken at the tower index $n=\left|w\right|$
matching the depth of $w$. Thus $a_{s}\left(w\right)$ measures how
much the diagonal value at $\varphi_{w}\left(s\right)$ increases
when one passes from $K_{\left|w\right|}$ to $K_{\left|w\right|+1}=LK_{\left|w\right|}$.
Equivalently, it is the local amount of new diagonal mass created
by a single application of $L$ at the node $w$. 

The quantity $c_{s}\left(w,wi\right):=\frac{a_{s}\left(w\right)}{m^{\left|w\right|+1}}$
in \prettyref{eq:2-6} is introduced as an edge weight on the rooted
$m$-ary tree. It is chosen so that the total weight of the level-$k$
cutset $E_{k}$ takes the simple form 
\begin{equation}
C_{k}\left(s\right):=\sum_{e\in E_{k}}c_{s}\left(e\right)=\frac{1}{m^{k}}\sum_{w\in W_{k}}a_{s}\left(w\right),\label{eq:2-7}
\end{equation}
since each vertex $w\in W_{k}$ contributes $m$ edges, each of weight
$a_{s}\left(w\right)/m^{k+1}$. In particular, by \prettyref{lem:telescoping},
\[
C_{k}\left(s\right)=\frac{u_{2k+1}\left(s\right)-u_{2k}\left(s\right)}{m^{k}}.
\]
This normalization is the one compatible with the energy functional
introduced below: with these edge weights, level cutsets provide the
natural lower bounds for the energy of a unit flow, and hence for
the effective resistance to depth $N$. (Recall that, the cutset $E_{k}$
consists of edges from level $k$ to level $k+1$, namely edges $\left(w,wi\right)$
with $\left|w\right|=k$.)
\end{rem}

\begin{defn}
Fix a depth $N\ge1$. A unit flow to depth $N$ is a nonnegative function
$\theta$ on edges with $\left|w\right|\le N-1$ such that 
\begin{equation}
\sum^{m}_{i=1}\theta\left(\emptyset,i\right)=1,\label{eq:2-8}
\end{equation}
and for every internal vertex $w$ with $0<\left|w\right|<N$, 
\begin{equation}
\theta\left(w^{-},w\right)=\sum^{m}_{i=1}\theta\left(w,wi\right),\label{eq:2-9}
\end{equation}
where $w^{-}$ denotes the parent of $w$.

The flow energy is 
\begin{equation}
\mathcal{E}_{s}\left(\theta\right):=\sum_{e}\frac{\theta\left(e\right)^{2}}{c_{s}\left(e\right)},\label{eq:2-10}
\end{equation}
summing over all edges up to depth $N$. Define the effective resistance
and conductance by 
\begin{align}
R_{N}\left(s\right) & :=\inf\left\{ \mathcal{E}_{s}\left(\theta\right):\theta\text{ is a unit flow to depth }N\right\} ,\\
\mathrm{Cap}_{N}\left(s\right) & :=\frac{1}{R_{N}\left(s\right)}.
\end{align}
\end{defn}

The next lemma identifies a key identity of the construction. Although
the increments $a_{s}(w)$ are defined at individual nodes, their
total mass across each level admits a closed-form identity depending
only on the diagonal sequence $u_{n}(s)$. This levelwise telescoping
will lead to all subsequent capacity estimates. 
\begin{lem}
\label{lem:telescoping}For every $k\ge0$, 
\begin{equation}
\sum_{w\in W_{k}}a_{s}\left(w\right)=u_{2k+1}\left(s\right)-u_{2k}\left(s\right).\label{eq:2.11}
\end{equation}
In particular, for every $N\ge1$, 
\begin{align}
\sum^{N-1}_{k=0}\sum_{w\in W_{k}}a_{s}\left(w\right) & =\sum^{N-1}_{k=0}\left(u_{2k+1}\left(s\right)-u_{2k}\left(s\right)\right)\nonumber \\
 & =\left(u_{1}-u_{0}\right)+\left(u_{3}-u_{2}\right)+\cdots+\left(u_{2N-1}-u_{2N-2}\right).\label{eq:2.12}
\end{align}
\end{lem}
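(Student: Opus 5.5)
The plan is to reduce the identity to one structural fact about iterates of $L$: for every kernel $J$, every $k\ge0$ and every $s,t\in X$,
\[
\left(L^{k}J\right)\left(s,t\right)=\sum_{w\in W_{k}}J\left(\varphi_{w}\left(s\right),\varphi_{w}\left(t\right)\right).
\]
I would prove this by induction on $k$. The case $k=0$ is trivial, since $W_{0}=\left\{ \emptyset\right\}$ and $\varphi_{\emptyset}=\mathrm{id}$. For the inductive step I would write $L^{k+1}J=L^{k}(LJ)$ and apply the induction hypothesis to the kernel $LJ$. This gives $\sum_{w\in W_{k}}\sum_{i}J(\varphi_{i}\varphi_{w}(s),\varphi_{i}\varphi_{w}(t))$, and with the paper's convention $\varphi_{wi}=\varphi_{i}\circ\varphi_{w}$ this is exactly the sum over $wi\in W_{k+1}$.

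The one point needing care is the composition order in $\varphi_{w}=\varphi_{i_{n}}\circ\cdots\circ\varphi_{i_{1}}$. I split $L^{k+1}=L^{k}\circ L$ rather than $L\circ L^{k}$ precisely so that the new letter is appended on the right of $w$, which is consistent with that convention. In any case the sum runs over all words, so it could be reindexed if needed. This is the only step I expect to require attention, and it is mild.

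Next I apply the fact to $J=K_{n}$ on the diagonal. Since $K_{n+k}=L^{k}K_{n}$, it gives
\[
\sum_{w\in W_{k}}u_{n}\left(\varphi_{w}\left(s\right)\right)=u_{n+k}\left(s\right)\qquad\text{for all }n,k\ge0.
\]
Now I sum the definition \eqref{eq:2-5} over $w\in W_{k}$ and use the previous display twice, once with $n=k+1$ and once with $n=k$. This yields
\[
\sum_{w\in W_{k}}a_{s}(w)=\sum_{w\in W_{k}}u_{k+1}(\varphi_{w}(s))-\sum_{w\in W_{k}}u_{k}(\varphi_{w}(s))=u_{2k+1}(s)-u_{2k}(s),
\]
which is \eqref{eq:2.11}. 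Both sums are finite sums of finite numbers, so subtracting them is legitimate.

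Finally, summing \eqref{eq:2.11} over $k=0,\dots,N-1$ gives \eqref{eq:2.12} directly. No telescoping cancellation occurs there, because only the odd-minus-even differences appear.
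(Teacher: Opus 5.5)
Your proposal is correct and follows essentially the paper's route: both arguments rest on the word expansion $(L^{k}J)(s,t)=\sum_{w\in W_{k}}J(\varphi_{w}(s),\varphi_{w}(t))$ evaluated on the diagonal. The only difference is cosmetic. The paper unfolds $u_{k+1}$ and $u_{k}$ down to $u_{0}$ and recombines them via the concatenation bijection $W_{k}\times W_{k+1}\to W_{2k+1}$, whereas you use $K_{n+k}=L^{k}K_{n}$ to get $\sum_{w\in W_{k}}u_{n}(\varphi_{w}(s))=u_{n+k}(s)$ directly, which absorbs that bijection into the semigroup property (and your handling of the composition order $\varphi_{wi}=\varphi_{i}\circ\varphi_{w}$ is right).
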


\begin{proof}
Fix $k\ge0$. By definition \prettyref{eq:2-5}, 
\[
\sum_{w\in W_{k}}a_{s}\left(w\right)=\sum_{w\in W_{k}}\left(u_{k+1}\left(\varphi_{w}\left(s\right)\right)-u_{k}\left(\varphi_{w}\left(s\right)\right)\right).
\]
On the diagonal, iterating $L$ gives 
\begin{align*}
u_{n}\left(x\right) & =K_{n}\left(x,x\right)=\left(L^{n}K\right)\left(x,x\right)\\
 & =\sum_{v\in W_{n}}K\left(\varphi_{v}\left(x\right),\varphi_{v}\left(x\right)\right)=\sum_{v\in W_{n}}u_{0}\left(\varphi_{v}\left(x\right)\right).
\end{align*}
Therefore, 
\begin{align*}
\sum_{w\in W_{k}}u_{k+1}\left(\varphi_{w}\left(s\right)\right) & =\sum_{w\in W_{k}}\sum_{v\in W_{k+1}}u_{0}\left(\varphi_{v}\left(\varphi_{w}\left(s\right)\right)\right)\\
 & =\sum_{w\in W_{k}}\sum_{v\in W_{k+1}}u_{0}\left(\varphi_{wv}\left(s\right)\right).
\end{align*}
The concatenation map $\left(w,v\right)\mapsto wv$ is a bijection
$W_{k}\times W_{k+1}\to W_{2k+1}$, hence the last expression equals
\[
\sum_{u\in W_{2k+1}}u_{0}\left(\varphi_{u}\left(s\right)\right)=u_{2k+1}\left(s\right).
\]
Similarly, 
\[
\sum_{w\in W_{k}}u_{k}\left(\varphi_{w}\left(s\right)\right)=\sum_{w\in W_{k}}\sum_{v\in W_{k}}u_{0}\left(\varphi_{wv}\left(s\right)\right)=\sum_{u\in W_{2k}}u_{0}\left(\varphi_{u}\left(s\right)\right)=u_{2k}\left(s\right).
\]
Subtracting yields the identity \prettyref{eq:2.11}. Summing over
$k=0,\dots,N-1$ gives the stated formula \prettyref{eq:2.12}. 
\end{proof}

We next express the conductance of a level cutset and obtain a universal
lower bound on the energy of any unit flow.
\begin{lem}
\label{lem:cutset}For every $k\ge0$, 
\[
C_{k}\left(s\right)=\frac{1}{m^{k}}\sum_{w\in W_{k}}a_{s}\left(w\right)=\frac{u_{2k+1}\left(s\right)-u_{2k}\left(s\right)}{m^{k}}.
\]
Moreover, for every $N\ge1$ and every unit flow $\theta$ to depth
$N$, 
\begin{equation}
\mathcal{E}_{s}\left(\theta\right)\ge\sum^{N-1}_{k=0}\frac{1}{C_{k}\left(s\right)}.\label{eq:2-15}
\end{equation}
Consequently, 
\[
R_{N}\left(s\right)\ge\sum^{N-1}_{k=0}\frac{1}{C_{k}\left(s\right)},\qquad\mathrm{Cap}_{N}\left(s\right)\le\left(\sum^{N-1}_{k=0}\frac{1}{C_{k}\left(s\right)}\right)^{-1}.
\]
\end{lem}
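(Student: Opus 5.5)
The first identity needs no new work. By \prettyref{eq:2-6}, each vertex $w\in W_{k}$ carries $m$ outgoing edges, each of conductance $a_{s}(w)/m^{k+1}$. Summing over $i$ and over $w\in W_{k}$ therefore gives $C_{k}(s)=m^{-k}\sum_{w\in W_{k}}a_{s}(w)$, and \prettyref{lem:telescoping} turns this into $(u_{2k+1}(s)-u_{2k}(s))/m^{k}$.

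For the energy bound \prettyref{eq:2-15}, I will use the classical Nash-Williams cutset argument, applied level by level.

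\emph{Step 1: each cutset carries the full unit of flow.} For every $k\le N-1$,
\[
\sum_{e\in E_{k}}\theta(e)=1.
\]
This follows by induction on $k$. The case $k=0$ is \prettyref{eq:2-8}. For the inductive step, note that the edges of $E_{k}$ ($k\ge1$) are grouped by their parent vertex $w\in W_{k}$. Kirchhoff's rule \prettyref{eq:2-9} says that the edges below $w$ carry total flow $\theta(w^{-},w)$. Summing over $w\in W_{k}$ then reproduces $\sum_{e\in E_{k-1}}\theta(e)$.

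\emph{Step 2: Cauchy--Schwarz on each level.} Since the cutsets $E_{0},\dots,E_{N-1}$ are disjoint and together exhaust all edges up to depth $N$, the energy splits as
\[
\mathcal{E}_{s}(\theta)=\sum_{k=0}^{N-1}\sum_{e\in E_{k}}\frac{\theta(e)^{2}}{c_{s}(e)}.
\]
On each level, Cauchy--Schwarz gives
\[
1=\Bigl(\sum_{e\in E_{k}}\theta(e)\Bigr)^{2}\le\Bigl(\sum_{e\in E_{k}}\frac{\theta(e)^{2}}{c_{s}(e)}\Bigr)\Bigl(\sum_{e\in E_{k}}c_{s}(e)\Bigr),
\]
so the level-$k$ energy is at least $1/C_{k}(s)$. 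Summing over $k$ yields \prettyref{eq:2-15}. Taking the infimum over unit flows gives the stated bound on $R_{N}(s)$, and taking reciprocals gives the bound on $\mathrm{Cap}_{N}(s)$.

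The main obstacle is not the argument itself but the conventions at degenerate edges, where $c_{s}(e)=0$. I would adopt the standard convention: $\theta^{2}/0=+\infty$ when $\theta>0$, $0/0=0$, and $1/0=+\infty$. With these conventions, Cauchy--Schwarz is applied only over edges with $c_{s}(e)>0$.

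Suppose some edge in $E_{k}$ has zero conductance while carrying positive flow. Then the energy is already infinite and the inequality holds trivially. Otherwise, all of the flow on $E_{k}$ passes through edges of positive conductance, and the restricted Cauchy--Schwarz estimate still gives the bound. In the extreme case $C_{k}(s)=0$ the right-hand side is $+\infty$. In that case every unit flow must have infinite energy, because by Step 1 the flow on $E_{k}$ is positive on some edge. So that case is consistent as well.
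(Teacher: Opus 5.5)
Your proposal is correct and follows the paper's proof: the cutset conductance is computed by counting the $m$ outgoing edges per vertex and then applying \prettyref{lem:telescoping}, and the energy bound comes from the unit flow across each level cutset, a levelwise Cauchy--Schwarz inequality, and summation over the disjoint cutsets $E_{0},\dots,E_{N-1}$. The paper only asserts that each cutset carries total flow $1$ and says nothing about edges with $c_{s}(e)=0$; your induction via \prettyref{eq:2-9} and your handling of zero conductances supply those omitted details correctly.
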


\begin{proof}
For $\left|w\right|=k$, each edge out of $w$ has conductance $a_{s}\left(w\right)/m^{k+1}$,
and there are $m$ such edges. Hence the total conductance contributed
by $w$ to $E_{k}$ is 
\[
m\cdot\frac{a_{s}\left(w\right)}{m^{k+1}}=\frac{a_{s}\left(w\right)}{m^{k}},
\]
and summing over $w\in W_{k}$ gives $C_{k}\left(s\right)=m^{-k}\sum_{w\in W_{k}}a_{s}\left(w\right)$.
Inserting the identity from \prettyref{lem:telescoping} yields $C_{k}\left(s\right)=\left(u_{2k+1}\left(s\right)-u_{2k}\left(s\right)\right)/m^{k}$.

For the energy bound, fix $k\in\left\{ 0,\dots,N-1\right\} $. By
the root normalization \prettyref{eq:2-8} and conservation at intermediate
vertices \prettyref{eq:2-9}, the total flow crossing each cutset
$E_{k}$ equals $1$, hence 
\[
\sum_{e\in E_{k}}\theta\left(e\right)=1.
\]
By Cauchy-Schwarz, 
\[
1=\left(\sum_{e\in E_{k}}\theta\left(e\right)\right)^{2}\le\left(\sum_{e\in E_{k}}\frac{\theta\left(e\right)^{2}}{c_{s}\left(e\right)}\right)\left(\sum_{e\in E_{k}}c_{s}\left(e\right)\right)=\left(\sum_{e\in E_{k}}\frac{\theta\left(e\right)^{2}}{c_{s}\left(e\right)}\right)C_{k}\left(s\right),
\]
hence $\sum_{e\in E_{k}}\frac{\theta\left(e\right)^{2}}{c_{s}\left(e\right)}\ge\frac{1}{C_{k}\left(s\right)}$.
Summing over $k=0,\dots,N-1$, using disjointness of the cutsets and
the definition \prettyref{eq:2-10} gives \prettyref{eq:2-15}. 
\end{proof}

\begin{rem}
The estimate \prettyref{eq:2-15} is the classical cutset lower bound
for the energy of a unit flow (a Nash-Williams-type inequality for
trees/electrical networks), specialized to the level cutsets $E_{k}$
in our canonical network. See e.g., \cite{MR920811,MR3616205}.
\end{rem}

Combining the cutset formula with the telescoping identity yields
a clean lower bound on the resistance to depth $N$ expressed entirely
through the alternating differences $u_{2k+1}(s)-u_{2k}(s)$. This
gives a one-sided criterion for vanishing capacity. 
\begin{thm}
\label{thm:lower-bound}Fix $s\in X$. With $a_{s}\left(w\right)$,
$c_{s}\left(e\right)$, $C_{k}\left(s\right)$, $R_{N}\left(s\right)$,
and $\mathrm{Cap}_{N}\left(s\right)$ as above, the following hold.
\begin{enumerate}
\item For every $N\ge1$, 
\begin{equation}
R_{N}\left(s\right)\ge\sum^{N-1}_{k=0}\frac{m^{k}}{u_{2k+1}\left(s\right)-u_{2k}\left(s\right)},\label{eq:2-16}
\end{equation}
with the convention that a zero denominator gives the corresponding
term $+\infty$. Equivalently, 
\begin{equation}
\mathrm{Cap}_{N}\left(s\right)\le\left(\sum^{N-1}_{k=0}\frac{m^{k}}{u_{2k+1}\left(s\right)-u_{2k}\left(s\right)}\right)^{-1}.\label{eq:2-17}
\end{equation}
\item Define 
\[
S\left(s\right):=\sum^{\infty}_{k=0}\frac{m^{k}}{u_{2k+1}\left(s\right)-u_{2k}\left(s\right)}\in\left(0,\infty\right].
\]
If $S\left(s\right)=\infty$ then $R_{N}\left(s\right)\to\infty$
and $\mathrm{Cap}_{N}\left(s\right)\to0$ as $N\to\infty$. If $S\left(s\right)<\infty$,
the bound above does not, by itself, imply a positive lower bound
on $\mathrm{Cap}_{N}\left(s\right)$.
\end{enumerate}
\end{thm}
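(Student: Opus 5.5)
The plan is to read part (1) directly off \prettyref{lem:cutset} and then obtain part (2) by monotonicity in $N$.

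For (1), fix $N\ge1$. By \prettyref{lem:cutset}, every unit flow $\theta$ to depth $N$ satisfies $\mathcal{E}_{s}(\theta)\ge\sum_{k=0}^{N-1}1/C_{k}(s)$. The same lemma gives $C_{k}(s)=(u_{2k+1}(s)-u_{2k}(s))/m^{k}$, so $1/C_{k}(s)=m^{k}/(u_{2k+1}(s)-u_{2k}(s))$. Taking the infimum over unit flows yields \prettyref{eq:2-16}, and taking reciprocals yields \prettyref{eq:2-17}.

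The one point needing care is the degenerate case $C_{k}(s)=0$. Then every edge $e\in E_{k}$ has $c_{s}(e)=0$, since all the $a_{s}(w)$ are nonnegative. A unit flow must still push total mass $1$ across $E_{k}$, so some edge carries $\theta(e)>0$ with zero conductance. Under the natural convention $\theta^{2}/0=+\infty$ for $\theta>0$ (and $0/0=0$), the energy is $+\infty$. Hence $R_{N}(s)=+\infty$, which agrees with the stated convention, and $\mathrm{Cap}_{N}(s)=0$. I will state this convention explicitly, since the Cauchy--Schwarz step in \prettyref{lem:cutset} implicitly assumes $C_{k}(s)>0$.

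For (2), the terms of $S(s)$ are positive (possibly $+\infty$), so $S(s)\in(0,\infty]$. Next I check that $R_{N}(s)$ is nondecreasing in $N$. Restricting a unit flow to depth $N+1$ to the edges with $|w|\le N-1$ gives a unit flow to depth $N$ with no larger energy, so $R_{N}(s)\le R_{N+1}(s)$. In any case the lower bound \prettyref{eq:2-16} alone suffices: its right side consists of the partial sums of $S(s)$. If $S(s)=\infty$, these partial sums tend to $\infty$, so $R_{N}(s)\to\infty$ and $\mathrm{Cap}_{N}(s)=1/R_{N}(s)\to0$.

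The final sentence of (2) is a disclaimer rather than a mathematical claim. I would note that if $S(s)<\infty$, then \prettyref{eq:2-16} gives only a lower bound on $R_{N}(s)$. Nothing prevents the true resistance from being much larger, because concentration of the $a_{s}(w)$ on few nodes within a level makes every flow's energy exceed the cutset bound. An upper bound on $R_{N}(s)$ requires exhibiting a specific flow, which is deferred to the level-regularity hypothesis. There is no real obstacle in this theorem: it is a reformulation of \prettyref{lem:cutset}, and the only care needed is with the $+\infty$ conventions.
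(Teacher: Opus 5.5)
Your proposal is correct and is essentially the paper's proof: part (1) is Lemma~\ref{lem:cutset} combined with $C_{k}(s)=(u_{2k+1}(s)-u_{2k}(s))/m^{k}$, and part (2) follows because $R_{N}(s)$ dominates the partial sums of $S(s)$. Your explicit zero-conductance conventions ($\theta^{2}/0=+\infty$ for $\theta>0$, $0/0=0$) make precise something the paper leaves implicit; the monotonicity of $R_{N}(s)$ in $N$ is not needed here, and the paper only uses it in Corollary~\ref{cor:capacity-criterion}.
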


\begin{proof}
The resistance bound \prettyref{eq:2-16} (equivalently, \prettyref{eq:2-17})
is \prettyref{lem:cutset} with the explicit formula for $C_{k}\left(s\right)$.
For part (2), write 
\[
S_{N}\left(s\right):=\sum^{N-1}_{k=0}\frac{m^{k}}{u_{2k+1}\left(s\right)-u_{2k}\left(s\right)}.
\]
Then $R_{N}\left(s\right)\ge S_{N}\left(s\right)$. If $S\left(s\right)=\infty$
then $S_{N}\left(s\right)\to\infty$, hence $R_{N}\left(s\right)\to\infty$
and $\mathrm{Cap}_{N}\left(s\right)=1/R_{N}\left(s\right)\to0$. 
\end{proof}

The lower bound alone does not rule out the possibility that the capacity
remains strictly positive even when the incremental mass is unevenly
distributed across each level. To obtain a matching upper bound, we
impose a mild concentration condition. Under this hypothesis, a uniform
splitting flow has energy within a fixed factor of the lower bound,
leading to two-sided resistance estimates.

Specifically, assume there exists $\Lambda\ge1$ such that 
\begin{equation}
\left(\sum_{w\in W_{k}}a_{s}\left(w\right)\right)\left(\sum_{w\in W_{k}}\frac{1}{a_{s}\left(w\right)}\right)\le\Lambda m^{2k}\label{eq:2-18}
\end{equation}
for every $k\ge0$ with $\sum_{w\in W_{k}}a_{s}\left(w\right)>0$.

If $\sum_{w\in W_{k}}a_{s}\left(w\right)=0$ for some $k$, then $C_{k}\left(s\right)=0$
and $R_{N}\left(s\right)=+\infty$ for all $N\ge k+1$, and the comparisons
below hold with $+\infty$ on both sides.
\begin{thm}
\label{thm:upperbound}Under \prettyref{eq:2-18}, for every $N\ge1$,
\begin{equation}
\sum^{N-1}_{k=0}\frac{m^{k}}{u_{2k+1}\left(s\right)-u_{2k}\left(s\right)}\le R_{N}\left(s\right)\le\Lambda\sum^{N-1}_{k=0}\frac{m^{k}}{u_{2k+1}\left(s\right)-u_{2k}\left(s\right)}.\label{eq:2-19}
\end{equation}
Equivalently, 
\begin{equation}
\frac{1}{\Lambda}\left(\sum^{N-1}_{k=0}\frac{m^{k}}{u_{2k+1}\left(s\right)-u_{2k}\left(s\right)}\right)^{-1}\le\mathrm{Cap}_{N}\left(s\right)\le\left(\sum^{N-1}_{k=0}\frac{m^{k}}{u_{2k+1}\left(s\right)-u_{2k}\left(s\right)}\right)^{-1}.\label{eq:2-20}
\end{equation}
\end{thm}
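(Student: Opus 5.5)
The lower bound in \prettyref{eq:2-19} is exactly \prettyref{eq:2-16} from \prettyref{thm:lower-bound}, so only the upper bound needs proof. Since $R_{N}(s)$ is an infimum over unit flows, it suffices to exhibit one unit flow $\theta$ to depth $N$ with $\mathcal{E}_{s}(\theta)\le\Lambda\sum_{k<N}m^{k}/(u_{2k+1}(s)-u_{2k}(s))$. I will use the uniform splitting flow
\[
\theta(w,wi):=m^{-(|w|+1)},\qquad |w|\le N-1,\ 1\le i\le m .
\]
It is nonnegative and satisfies \prettyref{eq:2-8}, since $\sum_{i}\theta(\emptyset,i)=m\cdot m^{-1}=1$. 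It also satisfies \prettyref{eq:2-9}: for $|w|=j$ one has $\theta(w^{-},w)=m^{-j}=m\cdot m^{-(j+1)}=\sum_{i}\theta(w,wi)$.

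Next I compute the energy level by level. For an edge $e=(w,wi)$ with $|w|=k$, \prettyref{eq:2-6} gives
\[
\frac{\theta(e)^{2}}{c_{s}(e)}=\frac{m^{-2(k+1)}\,m^{k+1}}{a_{s}(w)}=\frac{m^{-(k+1)}}{a_{s}(w)}.
\]
Each vertex $w$ has $m$ outgoing edges, so the cutset $E_{k}$ contributes $m^{-k}\sum_{w\in W_{k}}1/a_{s}(w)$. Applying \prettyref{eq:2-18} and then \prettyref{lem:telescoping}, this contribution is at most
\[
m^{-k}\cdot\frac{\Lambda m^{2k}}{\sum_{w\in W_{k}}a_{s}(w)}=\frac{\Lambda m^{k}}{u_{2k+1}(s)-u_{2k}(s)}.
\]
Summing over $k=0,\dots,N-1$ bounds $\mathcal{E}_{s}(\theta)$, and hence $R_{N}(s)$, by $\Lambda$ times the series. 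The capacity form \prettyref{eq:2-20} then follows by taking reciprocals.

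The only delicate point is the degenerate bookkeeping. First suppose that at some level $k$ the sum $\sum_{W_{k}}a_{s}>0$ while some individual $a_{s}(w)=0$. With the natural convention $1/0=+\infty$, the left side of \prettyref{eq:2-18} is then infinite, so \prettyref{eq:2-18} itself forces every $a_{s}(w)>0$. Consequently every edge up to depth $N$ has positive conductance, and the energy computation above is legitimate. Now suppose some level sum vanishes. As noted before the theorem, both sides of \prettyref{eq:2-19} are then $+\infty$: the right side contains an infinite term, and the lower bound forces $R_{N}(s)=\infty$.

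I expect no real obstacle. The main care is to check that the exponent bookkeeping in $c_{s}$ cancels exactly to $m^{-k}\sum 1/a_{s}(w)$, so that hypothesis \prettyref{eq:2-18} enters with precisely the power $m^{2k}$.
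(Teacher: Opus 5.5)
Your proposal is correct and follows the paper's proof essentially verbatim. The lower bound is quoted from Theorem~\ref{thm:lower-bound}, and the upper bound comes from the uniform splitting flow $\theta(w,wi)=m^{-(|w|+1)}$, whose level-$k$ energy $m^{-k}\sum_{w\in W_k}a_s(w)^{-1}$ is then controlled by \eqref{eq:2-18}; your remark that \eqref{eq:2-18}, read with $1/0=+\infty$, forces every $a_s(w)>0$ on levels of positive mass makes explicit a point the paper leaves implicit, and your treatment of zero-mass levels matches the paper's remark preceding the theorem.
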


\begin{proof}
The lower bound in \prettyref{eq:2-19} is from \prettyref{thm:lower-bound}.
For the upper bound, it suffices to give a unit flow with controlled
energy. Define the uniform splitting flow by 
\[
\theta^{\mathrm{unif}}\left(w,wi\right):=\frac{1}{m^{\left|w\right|+1}}\qquad\left(\left|w\right|\le N-1\right).
\]
This is a unit flow in the sense of \prettyref{eq:2-8}-\prettyref{eq:2-9},
since $\sum^{m}_{i=1}\theta^{\mathrm{unif}}\left(\emptyset,i\right)=1$
and, for $0<\left|w\right|<N$, 
\[
\theta^{\mathrm{unif}}\left(w^{-},w\right)=\frac{1}{m^{\left|w\right|}}=\sum^{m}_{i=1}\frac{1}{m^{\left|w\right|+1}}=\sum^{m}_{i=1}\theta^{\mathrm{unif}}\left(w,wi\right).
\]
Fix $k\in\left\{ 0,\dots,N-1\right\} $. For an edge $e=\left(w,wi\right)$
with $\left|w\right|=k$ we have 
\[
\theta^{\mathrm{unif}}\left(e\right)=\frac{1}{m^{k+1}},\qquad c_{s}\left(e\right)=\frac{a_{s}\left(w\right)}{m^{k+1}},
\]
hence 
\[
\frac{\theta^{\mathrm{unif}}\left(e\right)^{2}}{c_{s}\left(e\right)}=\frac{1/m^{2k+2}}{a_{s}\left(w\right)/m^{k+1}}=\frac{1}{m^{k+1}}\cdot\frac{1}{a_{s}\left(w\right)}.
\]
Summing over the $m$ children of $w$ gives 
\[
\sum^{m}_{i=1}\frac{\theta^{\mathrm{unif}}\left(w,wi\right)^{2}}{c_{s}\left(w,wi\right)}=\frac{1}{m^{k}}\cdot\frac{1}{a_{s}\left(w\right)}.
\]
Therefore the level-$k$ contribution to the energy is 
\[
\sum_{e\in E_{k}}\frac{\theta^{\mathrm{unif}}\left(e\right)^{2}}{c_{s}\left(e\right)}=\frac{1}{m^{k}}\sum_{w\in W_{k}}\frac{1}{a_{s}\left(w\right)}.
\]
By the concentration hypothesis \prettyref{eq:2-18}, 
\[
\sum_{w\in W_{k}}\frac{1}{a_{s}\left(w\right)}\le\Lambda\frac{m^{2k}}{\sum_{w\in W_{k}}a_{s}\left(w\right)}.
\]
Hence 
\[
\sum_{e\in E_{k}}\frac{\theta^{\mathrm{unif}}\left(e\right)^{2}}{c_{s}\left(e\right)}\le\frac{1}{m^{k}}\cdot\Lambda\frac{m^{2k}}{\sum_{w\in W_{k}}a_{s}\left(w\right)}=\Lambda\frac{m^{k}}{\sum_{w\in W_{k}}a_{s}\left(w\right)}=\Lambda\frac{1}{C_{k}\left(s\right)}.
\]
Summing over $k=0,\dots,N-1$ yields 
\[
\mathcal{E}_{s}\left(\theta^{\mathrm{unif}}\right)\le\Lambda\sum^{N-1}_{k=0}\frac{1}{C_{k}\left(s\right)}=\Lambda\sum^{N-1}_{k=0}\frac{m^{k}}{u_{2k+1}\left(s\right)-u_{2k}\left(s\right)}.
\]
Since $R_{N}\left(s\right)$ is the infimum of $\mathcal{E}_{s}\left(\theta\right)$
over all unit flows, we obtain the desired upper bound in \prettyref{eq:2-19}.
And \prettyref{eq:2-20} is immediate. 
\end{proof}

With both bounds in hand, the behavior of $R_{N}(s)$ as $N\to\infty$
can be determined. The following corollary shows that the scalar series
\prettyref{eq:2-21} is the obstruction to having strictly positive
limiting capacity. 
\begin{cor}
\label{cor:capacity-criterion} Fix $s\in X$ and assume \prettyref{eq:2-18}
holds at $s$ with constant $\Lambda$. Set 
\begin{equation}
S\left(s\right):=\sum^{\infty}_{k=0}\frac{m^{k}}{u_{2k+1}\left(s\right)-u_{2k}\left(s\right)}\in\left(0,\infty\right].\label{eq:2-21}
\end{equation}
Then the limit $R_{\infty}\left(s\right):=\lim_{N\to\infty}R_{N}\left(s\right)\in\left(0,\infty\right]$
exists and satisfies 
\[
S\left(s\right)\le R_{\infty}\left(s\right)\le\Lambda S\left(s\right).
\]
Equivalently, 
\[
\frac{1}{\Lambda S\left(s\right)}\le\mathrm{Cap}_{\infty}\left(s\right)\le\frac{1}{S\left(s\right)}.
\]
In particular, $\mathrm{Cap}_{\infty}\left(s\right)>0$ if and only
if $S\left(s\right)<\infty$. 
\end{cor}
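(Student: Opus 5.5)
The plan is to establish, first, that $R_{N}(s)$ is nondecreasing in $N$. The limit $R_{\infty}(s)\in(0,\infty]$ then exists by monotonicity. After that we pass to the limit in the two-sided bound of \prettyref{thm:upperbound}.

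For monotonicity, take any unit flow $\theta$ to depth $N+1$. Its restriction to edges $(w,wi)$ with $|w|\le N-1$ is a unit flow to depth $N$. Indeed, the root normalization \prettyref{eq:2-8} is unchanged, and conservation \prettyref{eq:2-9} at vertices with $0<|w|<N$ is part of the depth-$(N+1)$ conditions. Since all terms in \prettyref{eq:2-10} are nonnegative, discarding the level-$N$ edges can only decrease the energy. Hence $R_{N}(s)\le\mathcal{E}_{s}(\theta)$, and taking the infimum over $\theta$ gives $R_{N}(s)\le R_{N+1}(s)$.

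We need one convention for edges with $c_{s}(e)=0$. We read $\theta(e)^{2}/0$ as $+\infty$ when $\theta(e)>0$, and as $0$ when $\theta(e)=0$. Nonnegativity of each term holds under this convention, so the argument is unaffected. Positivity of the limit follows from $R_{\infty}(s)\ge R_{1}(s)\ge m^{0}/(u_{1}(s)-u_{0}(s))>0$, using \prettyref{thm:lower-bound}; this equals $+\infty$ if the denominator vanishes.

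Next we write $S_{N}(s)$ for the partial sums of \prettyref{eq:2-21}. These are nondecreasing with limit $S(s)$. \prettyref{thm:upperbound} gives $S_{N}(s)\le R_{N}(s)\le\Lambda S_{N}(s)$ for each $N$. Letting $N\to\infty$ in both inequalities, which is allowed for monotone sequences in $[0,\infty]$, yields $S(s)\le R_{\infty}(s)\le\Lambda S(s)$.

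If some level has $\sum_{w\in W_{k}}a_{s}(w)=0$, the remark preceding \prettyref{thm:upperbound} gives $R_{N}(s)=\infty=S_{N}(s)$ for large $N$, so the bound holds trivially. Defining $\mathrm{Cap}_{\infty}(s):=1/R_{\infty}(s)=\lim_{N}\mathrm{Cap}_{N}(s)$, with $1/\infty=0$, and inverting gives the capacity bounds. The final equivalence is then immediate: $\mathrm{Cap}_{\infty}(s)>0$ if and only if $R_{\infty}(s)<\infty$, which holds if and only if $S(s)<\infty$, because $\Lambda<\infty$.

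The only step needing any care is the monotonicity of $R_{N}$ via restriction of flows, together with the infinite-conductance conventions. Once that is in place, the rest is a limit of the inequalities already proved.
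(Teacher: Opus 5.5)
Your proposal is correct and follows essentially the same route as the paper. You first get monotonicity of $R_{N}(s)$ in $N$ by restricting a deeper unit flow, and then pass to the limit in $S_{N}(s)\le R_{N}(s)\le\Lambda S_{N}(s)$ from Theorems~\ref{thm:lower-bound} and~\ref{thm:upperbound} before taking reciprocals. Your extra remarks are points the paper's proof leaves implicit: the zero-conductance convention, the positivity $R_{\infty}(s)\ge R_{1}(s)>0$, and the use of $\Lambda<\infty$ to get both directions of the equivalence.
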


\begin{proof}
The resistances are monotone in depth: if $N'\ge N$, every unit flow
$\theta'$ to depth $N'$ restricts to a unit flow $\theta$ to depth
$N$ with $\mathcal{E}_{s}\left(\theta\right)\le\mathcal{E}_{s}\left(\theta'\right)$.
Consequently $R_{N}\left(s\right)\le R_{N'}\left(s\right)$, so the
limit $R_{\infty}\left(s\right):=\lim_{N\to\infty}R_{N}\left(s\right)$
exists in $\left(0,\infty\right]$. Define $\mathrm{Cap}_{\infty}\left(s\right):=1/R_{\infty}\left(s\right)\in\left[0,\infty\right)$.

Now write 
\[
S_{N}\left(s\right):=\sum^{N-1}_{k=0}\frac{m^{k}}{u_{2k+1}\left(s\right)-u_{2k}\left(s\right)}.
\]
By Theorems \ref{thm:lower-bound} and \ref{thm:upperbound}, for
every $N\ge1$, 
\[
S_{N}\left(s\right)\le R_{N}\left(s\right)\le\Lambda S_{N}\left(s\right).
\]
Since $S_{N}\left(s\right)\uparrow S\left(s\right)$ as $N\to\infty$,
passing to the limit gives $S\left(s\right)\le R_{\infty}\left(s\right)\le\Lambda S\left(s\right)$.
Taking reciprocals yields 
\[
\frac{1}{\Lambda S\left(s\right)}\le\mathrm{Cap}_{\infty}\left(s\right)\le\frac{1}{S\left(s\right)},
\]
with the convention $1/\infty=0$. In particular, $\mathrm{Cap}_{\infty}\left(s\right)>0$
holds  when $S\left(s\right)<\infty$. 
\end{proof}

\begin{rem}
\prettyref{cor:capacity-criterion} isolates the scalar series $S\left(s\right)$
as the correct obstruction to positive limiting capacity. Under the
level concentration hypothesis \prettyref{eq:2-18}, finiteness of
$S\left(s\right)$ is equivalent to $\mathrm{Cap}_{\infty}\left(s\right)>0$,
and the two quantities determine each other up to the factor $\Lambda$. 
\end{rem}

\section{Invariant completion}\label{sec:3}

We now pass from diagonal control to an off-diagonal completion statement.
Throughout we keep the setting and notation of the preceding section:
$K:X\times X\to\mathbb{C}$ is positive definite, $L$ is the pullback
operator 
\[
\left(LJ\right)\left(s,t\right):=\sum^{m}_{i=1}J\left(\varphi_{i}\left(s\right),\varphi_{i}\left(t\right)\right),
\]
and we assume $LK\ge K$. We write $K_{0}:=K$ and $K_{n+1}:=LK_{n}$
for $n\ge0$, and $u_{n}\left(s\right):=K_{n}\left(s,s\right)$. For
each $s\in X$ the monotone limit 
\[
u_{\infty}\left(s\right):=\lim_{n\to\infty}u_{n}\left(s\right)\in\left[0,\infty\right]
\]
exists. 

We set 
\[
X_{\mathrm{fin}}:=\left\{ s\in X:u_{\infty}\left(s\right)<\infty\right\} .
\]
On $X_{\mathrm{fin}}$ the diagonal growth of the tower is controlled,
and the aim is to show that this control already yields convergence
of the full kernel tower on $X_{\mathrm{fin}}\times X_{\mathrm{fin}}$.
The idea is that positivity propagates diagonal bounds to off-diagonal
increments. 
\begin{lem}
\label{lem:3-1}Let $J:X\times X\to\mathbb{C}$ be positive definite.
Then for every $s,t\in X$, 
\[
\left|J\left(s,t\right)\right|^{2}\le J\left(s,s\right)J\left(t,t\right).
\]
\end{lem}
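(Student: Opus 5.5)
The plan is to use the positive semidefiniteness of the $2\times 2$ Gram matrix attached to the pair $\{s,t\}$. Positive definiteness of $J$ means that for all $c_{1},c_{2}\in\mathbb{C}$,
\[
\sum_{\alpha,\beta=1}^{2}\overline{c_{\alpha}}c_{\beta}J\left(s_{\alpha},s_{\beta}\right)\ge0,\qquad s_{1}=s,\ s_{2}=t .
\]
So the matrix
\[
G=\begin{pmatrix}J\left(s,s\right) & J\left(s,t\right)\\
J\left(t,s\right) & J\left(t,t\right)
\end{pmatrix}
\]
is positive semidefinite. Nonnegativity of the quadratic form for all complex coefficients forces $G$ to be Hermitian, i.e.\ $J\left(t,s\right)=\overline{J\left(s,t\right)}$. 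This is the standard polarization fact for complex forms. A positive semidefinite Hermitian matrix has nonnegative determinant, since its eigenvalues are $\ge0$, and this gives $J(s,s)J(t,t)-|J(s,t)|^{2}\ge0$ directly.

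Alternatively, I would argue through the reproducing kernel Hilbert space $H_{J}$, in parallel with the proof of the corollary on $u_{n+1}\ge u_{n}$. There the reproducing property gives $J\left(s,t\right)=\left\langle J\left(\cdot,t\right),J\left(\cdot,s\right)\right\rangle _{H_{J}}$. Cauchy--Schwarz in $H_{J}$, together with $\left\Vert J\left(\cdot,s\right)\right\Vert ^{2}=J\left(s,s\right)$, then yields the claim immediately. I would present this version, since the paper already invokes $H_{D_{n}}$ in that way.

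The only step requiring care is the Hermitian symmetry, if one avoids the RKHS route and uses elementary quadratic forms instead. In that case I would pick the coefficients directly. If $J(s,s)=0$, take $c_{1}=\lambda\overline{J(s,t)}$ with $\lambda\in\mathbb{R}$ and $c_{2}=-1$, then let $\lambda\to\infty$ to force $J(s,t)=0$. Otherwise take $c_{1}=-J(s,t)/J(s,s)$ and $c_{2}=1$, and expand to obtain $J(t,t)-|J(s,t)|^{2}/J(s,s)\ge0$.
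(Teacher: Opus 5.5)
Your first argument is exactly the paper's proof: positive semidefiniteness of the $2\times2$ Gram matrix, then $\det G\ge0$. You improve on it by making explicit that $J(t,s)=\overline{J(s,t)}$ is needed to read $\det G$ as $J(s,s)J(t,t)-|J(s,t)|^{2}$; the paper uses this silently.

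The RKHS route you say you would actually present is also correct, but it is the heavier of the two. The identity $J(s,t)=\langle J(\cdot,t),J(\cdot,s)\rangle_{H_{J}}$ rests on the Moore--Aronszajn construction. Its proof (checking that the induced form on $\mathrm{span}\{J(\cdot,x)\}$ is definite) already applies Cauchy--Schwarz to a positive semidefinite sesquilinear form, i.e.\ essentially this lemma. So the two-point determinant argument is the more elementary and self-contained one. The RKHS version only buys notational consistency with the later use of $\mathcal{H}(K_{\infty})$ and $H_{D_{n}}$.

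There is one concrete slip in your elementary fallback. You fixed the convention $\sum\overline{c_{\alpha}}c_{\beta}J(s_{\alpha},s_{\beta})$. Under it, the degenerate-case choice $c_{1}=\lambda\overline{J(s,t)}$, $c_{2}=-1$ gives the value $J(t,t)-2\lambda\,\mathrm{Re}(J(s,t)^{2})$. Letting $\lambda\to\pm\infty$ then only yields $\mathrm{Re}(J(s,t)^{2})=0$, which does not exclude, e.g., $J(s,t)=e^{i\pi/4}$. The correct choice is $c_{1}=\lambda J(s,t)$, which matches your nondegenerate choice $c_{1}=-J(s,t)/J(s,s)$ (that one is right). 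It gives $J(t,t)-2\lambda|J(s,t)|^{2}\ge0$ for all $\lambda>0$, hence $J(s,t)=0$.

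Note also that both expansions still use Hermitian symmetry. Picking coefficients therefore does not bypass that step; it relies on the polarization remark from your first paragraph.
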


\begin{proof}
Positivity of $J$ implies that the Gram matrix 
\[
\left[\begin{matrix}J\left(s,s\right) & J\left(s,t\right)\\
J\left(t,s\right) & J\left(t,t\right)
\end{matrix}\right]
\]
is positive semidefinite. In particular its determinant is nonnegative,
which is the claim.
\end{proof}

We now apply \prettyref{lem:3-1} to the positive definite increments
$D_{n}:=K_{n+1}-K_{n}$. This gives a uniform control of off-diagonal
increments by diagonal increments. 
\begin{cor}
The following pointwise estimate holds:
\begin{equation}
\left|K_{n+1}\left(s,t\right)-K_{n}\left(s,t\right)\right|\leq\left(u_{n+1}\left(s\right)-u_{n}\left(s\right)\right)^{1/2}\left(u_{n+1}\left(t\right)-u_{n}\left(t\right)\right)^{1/2}.\label{eq:3-1}
\end{equation}
\end{cor}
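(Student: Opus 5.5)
The plan is to apply \prettyref{lem:3-1} to the increment kernel $D_{n}:=K_{n+1}-K_{n}$. First I will record that $D_{n}$ is positive definite. This is part (1) of the corollary following the definition of the tower: it gives $K_{n+1}\ge K_{n}$ in the Loewner order, which by definition means $D_{n}\in\mathcal{K}\left(X\right)$. That corollary rests on \prettyref{lem:2-2} together with the subinvariance hypothesis \prettyref{eq:2-2}, via induction $K_{n+1}-K_{n}=L^{n}\left(LK-K\right)$.

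Next I will identify the diagonal of $D_{n}$. By \prettyref{eq:2-4}, $D_{n}\left(s,s\right)=K_{n+1}\left(s,s\right)-K_{n}\left(s,s\right)=u_{n+1}\left(s\right)-u_{n}\left(s\right)$, and similarly at $t$. Both quantities are nonnegative by part (2) of the same corollary, so their square roots make sense.

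Applying \prettyref{lem:3-1} to $J=D_{n}$ then yields
\[
\left|K_{n+1}\left(s,t\right)-K_{n}\left(s,t\right)\right|^{2}\le\left(u_{n+1}\left(s\right)-u_{n}\left(s\right)\right)\left(u_{n+1}\left(t\right)-u_{n}\left(t\right)\right).
\]
Taking nonnegative square roots gives \prettyref{eq:3-1}.

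There is no real obstacle here. The only point to check is that ``$\ge$'' is indeed the Loewner order, so that the difference is a genuine positive definite kernel. For that kernel the $2\times2$ Gram determinant argument of \prettyref{lem:3-1} applies verbatim, including the Hermitian symmetry $D_{n}\left(t,s\right)=\overline{D_{n}\left(s,t\right)}$, which follows from positive definiteness.
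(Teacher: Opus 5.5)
Your proposal is correct and matches the paper's proof: both apply \prettyref{lem:3-1} to the positive definite increment $D_{n}=K_{n+1}-K_{n}$, whose diagonal is $u_{n+1}-u_{n}$. Your explicit justification $K_{n+1}-K_{n}=L^{n}\left(LK-K\right)$ is a clean way to see positivity of $D_{n}$ from \prettyref{lem:2-2} and \prettyref{eq:2-2}.
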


\begin{proof}
Define the increment $\left(n\in\mathbb{N}_{0}\right)$
\[
D_{n}:=K_{n+1}-K_{n}.
\]
Since $K_{n+1}=LK_{n}\ge K_{n}$, each $D_{n}$ is positive definite.
Moreover, 
\[
D_{n}\left(s,s\right)=u_{n+1}\left(s\right)-u_{n}\left(s\right),\quad s\in X.
\]
\prettyref{eq:3-1} follows from \prettyref{lem:3-1} with $J=D_{n}$.
\end{proof}

The inequality \prettyref{eq:3-1} is the only place where the diagonal
enters, and it is used to control off-diagonal convergence. We first
note the resulting Cauchy criterion.
\begin{lem}
\label{lem:3-3}Fix $s,t\in X_{\mathrm{fin}}$. Then the series 
\[
\sum^{\infty}_{n=0}\left|K_{n+1}\left(s,t\right)-K_{n}\left(s,t\right)\right|
\]
converges. In particular, the limit 
\[
K_{\infty}\left(s,t\right):=\lim_{n\to\infty}K_{n}\left(s,t\right)
\]
exists as a finite complex number. 
\end{lem}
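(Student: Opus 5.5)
The plan is to bound each term of the series by \prettyref{eq:3-1} and then apply Cauchy--Schwarz in the summation index $n$. Write $\delta_{n}(x):=u_{n+1}(x)-u_{n}(x)\ge0$ for $x\in X$; these are nonnegative by the Corollary in \prettyref{sec:2}. By \prettyref{eq:3-1},
\[
\sum_{n=0}^{N-1}\left|K_{n+1}(s,t)-K_{n}(s,t)\right|\le\sum_{n=0}^{N-1}\delta_{n}(s)^{1/2}\delta_{n}(t)^{1/2}.
\]

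Next apply the Cauchy--Schwarz inequality for finite sums to the right-hand side:
\[
\sum_{n=0}^{N-1}\delta_{n}(s)^{1/2}\delta_{n}(t)^{1/2}\le\Bigl(\sum_{n=0}^{N-1}\delta_{n}(s)\Bigr)^{1/2}\Bigl(\sum_{n=0}^{N-1}\delta_{n}(t)\Bigr)^{1/2}.
\]
Each inner sum telescopes, giving $u_{N}(s)-u_{0}(s)$ and $u_{N}(t)-u_{0}(t)$. By monotonicity $u_{N}\le u_{\infty}$, so these are at most $u_{\infty}(s)-u_{0}(s)$ and $u_{\infty}(t)-u_{0}(t)$. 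Both are finite because $s,t\in X_{\mathrm{fin}}$. The partial sums of the nonnegative series are therefore bounded uniformly in $N$, which gives absolute convergence with
\[
\sum_{n}\left|K_{n+1}(s,t)-K_{n}(s,t)\right|\le\bigl(u_{\infty}(s)-u_{0}(s)\bigr)^{1/2}\bigl(u_{\infty}(t)-u_{0}(t)\bigr)^{1/2}.
\]

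Finally, $K_{N}(s,t)=K_{0}(s,t)+\sum_{n<N}\bigl(K_{n+1}(s,t)-K_{n}(s,t)\bigr)$. Since the series converges absolutely in $\mathbb{C}$, this sequence converges, so $K_{\infty}(s,t)$ exists as a finite complex number.

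There is no real obstacle here. The only point needing care is that the bound must not depend on $N$, and the telescoping against the finite limits $u_{\infty}(s),u_{\infty}(t)$ supplies exactly that.
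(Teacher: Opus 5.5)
Your proposal is correct and follows the paper's proof essentially step for step. You bound each increment by (\ref{eq:3-1}), apply Cauchy--Schwarz in $n$, telescope to $u_{N}-u_{0}\le u_{\infty}-u_{0}<\infty$, and deduce absolute convergence and hence the finite limit; your explicit bound on the total sum is a harmless extra.
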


\begin{proof}
By \prettyref{eq:3-1}, 
\[
\sum^{N-1}_{n=0}\left|K_{n+1}\left(s,t\right)-K_{n}\left(s,t\right)\right|\le\sum^{N-1}_{n=0}\left(u_{n+1}\left(s\right)-u_{n}\left(s\right)\right)^{1/2}\left(u_{n+1}\left(t\right)-u_{n}\left(t\right)\right)^{1/2}.
\]
By Cauchy-Schwarz, 
\begin{multline*}
\sum^{N-1}_{n=0}\left(u_{n+1}\left(s\right)-u_{n}\left(s\right)\right)^{1/2}\left(u_{n+1}\left(t\right)-u_{n}\left(t\right)\right)^{1/2}\\
\le\left(\sum^{N-1}_{n=0}\left(u_{n+1}\left(s\right)-u_{n}\left(s\right)\right)\right)^{1/2}\left(\sum^{N-1}_{n=0}\left(u_{n+1}\left(t\right)-u_{n}\left(t\right)\right)\right)^{1/2}.
\end{multline*}
Then we get
\[
\sum^{N-1}_{n=0}\left(u_{n+1}\left(s\right)-u_{n}\left(s\right)\right)=u_{N}\left(s\right)-u_{0}\left(s\right)\le u_{\infty}\left(s\right)-u_{0}\left(s\right)<\infty,
\]
and similarly for $t$. Letting $N\to\infty$ shows that the series
of absolute increments converges, hence $\left(K_{n}\left(s,t\right)\right)$
is Cauchy and has a finite limit. 
\end{proof}

\prettyref{lem:3-3} provides pointwise convergence on $X_{\mathrm{fin}}\times X_{\mathrm{fin}}$.
We now verify that the limit kernel inherits the structural properties
of the tower.
\begin{thm}
\label{thm:3-4}There exists a unique kernel $K_{\infty}:X_{\mathrm{fin}}\times X_{\mathrm{fin}}\to\mathbb{C}$
with the following properties.
\begin{enumerate}
\item For every $s,t\in X_{\mathrm{fin}}$, one has $K_{\infty}\left(s,t\right)=\lim_{n\to\infty}K_{n}\left(s,t\right)$.
\item $K_{\infty}$ is positive definite on $X_{\mathrm{fin}}$.
\item $K_{\infty}$ is $L$-invariant on $X_{\mathrm{fin}}$ in the sense
that for every $s,t\in X_{\mathrm{fin}}$, 
\[
\left(LK_{\infty}\right)\left(s,t\right)=K_{\infty}\left(s,t\right),
\]
and $K_{\infty}\ge K$ on $X_{\mathrm{fin}}$ in the kernel order.
\item If $J$ is any kernel on $X_{\mathrm{fin}}$ such that $J\ge K$ and
$LJ=J$, then $J\ge K_{\infty}$ on $X_{\mathrm{fin}}$ in the kernel
order.
\end{enumerate}
\end{thm}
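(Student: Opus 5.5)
The plan is to define $K_{\infty}$ by item (1) using \prettyref{lem:3-3}, and then check (2)--(4) by passing to pointwise limits in finite Gram forms. Uniqueness is immediate, since (1) determines $K_{\infty}$ pointwise on $X_{\mathrm{fin}}\times X_{\mathrm{fin}}$. The one non-formal ingredient, which I expect to be the main obstacle (mild as it is), is showing that $X_{\mathrm{fin}}$ is forward invariant under each $\varphi_{i}$. Without it, $LK_{\infty}$ and $LJ$ would not even make sense on $X_{\mathrm{fin}}$.

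\emph{Forward invariance of $X_{\mathrm{fin}}$.} On the diagonal we have $u_{n+1}(s)=\sum_{i=1}^{m}u_{n}(\varphi_{i}(s))$. Each summand is nonnegative, because $u_{n}=K_{n}(\cdot,\cdot)$ is the diagonal of a p.d. kernel. Hence $u_{n}(\varphi_{i}(s))\le u_{n+1}(s)\le u_{\infty}(s)$ for every $n$. Letting $n\to\infty$ gives $u_{\infty}(\varphi_{i}(s))\le u_{\infty}(s)<\infty$, so $\varphi_{i}(X_{\mathrm{fin}})\subset X_{\mathrm{fin}}$. Consequently $L$ acts on kernels restricted to $X_{\mathrm{fin}}$. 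The argument of \prettyref{lem:2-2}, run with points of $X_{\mathrm{fin}}$ only, shows that this restricted $L$ is again order-preserving.

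\emph{Positivity and the order relations.} For finite families $(c_{\alpha},s_{\alpha})\subset\mathbb{C}\times X_{\mathrm{fin}}$, the quadratic form $\sum\overline{c_{\alpha}}c_{\beta}K_{n}(s_{\alpha},s_{\beta})$ is $\ge0$ and is a finite sum, so it converges to the corresponding form for $K_{\infty}$. This gives (2). Likewise, for fixed $n$ and $N\ge n$ the kernel $K_{N}-K_{n}$ is p.d., being a sum of the increments $D_{j}$. Letting $N\to\infty$ shows $K_{\infty}-K_{n}$ is p.d. on $X_{\mathrm{fin}}$; in particular $K_{\infty}\ge K_{0}=K$. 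For invariance, take $s,t\in X_{\mathrm{fin}}$. Then $\varphi_{i}(s),\varphi_{i}(t)\in X_{\mathrm{fin}}$, and since $m$ is finite,
\[
(LK_{\infty})(s,t)=\sum_{i=1}^{m}\lim_{n}K_{n}(\varphi_{i}(s),\varphi_{i}(t))=\lim_{n}(LK_{n})(s,t)=\lim_{n}K_{n+1}(s,t)=K_{\infty}(s,t).
\]
This proves (3).

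\emph{Minimality.} Let $J$ be a kernel on $X_{\mathrm{fin}}$ with $J\ge K$ and $LJ=J$. I argue by induction using the order-preserving restricted $L$. If $J\ge K_{n}$, then $J=LJ\ge LK_{n}=K_{n+1}$; hence $J\ge K_{n}$ for all $n$. Thus each form $\sum\overline{c_{\alpha}}c_{\beta}(J-K_{n})(s_{\alpha},s_{\beta})$ is nonnegative. Passing to the limit as before yields $J-K_{\infty}$ p.d., that is, $J\ge K_{\infty}$, which is (4).
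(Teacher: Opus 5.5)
Your proposal is correct and follows essentially the same route as the paper. You define $K_{\infty}$ via Lemma~\ref{lem:3-3}, get positivity and $K_{\infty}\ge K$ by passing to limits in finite Gram forms, and prove invariance through forward invariance of $X_{\mathrm{fin}}$, which comes from the diagonal recursion $u_{n+1}(s)=\sum_{i}u_{n}(\varphi_{i}(s))$. You obtain minimality from $J\ge K_{n}$ for all $n$; your induction $J=LJ\ge LK_{n}=K_{n+1}$ is the paper's $J=L^{n}J\ge L^{n}K=K_{n}$. Your remark that the restricted $L$ is order-preserving on $X_{\mathrm{fin}}$, and your slightly stronger $K_{\infty}\ge K_{n}$ for every $n$, only make explicit points the paper leaves implicit.
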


\begin{proof}
Existence and uniqueness of the pointwise limit $K_{\infty}\left(s,t\right)$
for $s,t\in X_{\mathrm{fin}}$ is given by \prettyref{lem:3-3}.

To prove positive definiteness, fix points $s_{1},\dots,s_{r}\in X_{\mathrm{fin}}$
and consider the Gram matrices 
\[
G_{n}:=\left[K_{n}\left(s_{i},s_{j}\right)\right]^{r}_{i,j=1}.
\]
Each $G_{n}$ is positive semidefinite because $K_{n}$ is positive
definite. Moreover, for each $i,j$ the limit $\lim_{n\to\infty}K_{n}\left(s_{i},s_{j}\right)$
exists by \prettyref{lem:3-3}, hence $G_{n}$ converges entrywise
to the matrix 
\[
G_{\infty}:=\left[K_{\infty}\left(s_{i},s_{j}\right)\right]^{r}_{i,j=1}.
\]
For any vector $c\in\mathbb{C}^{r}$ we have $c^{*}G_{n}c\ge0$ for
all $n$, and $c^{*}G_{n}c\to c^{*}G_{\infty}c$ by entrywise convergence.
Therefore $c^{*}G_{\infty}c\ge0$ for all $c$, so $G_{\infty}$ is
positive semidefinite. Since the choice of $\left(s_{i}\right)$ was
arbitrary, $K_{\infty}$ is positive definite on $X_{\mathrm{fin}}$.

To prove invariance, fix $s,t\in X_{\mathrm{fin}}$ and write 
\[
K_{n+1}\left(s,t\right)=\left(LK_{n}\right)\left(s,t\right)=\sum^{m}_{i=1}K_{n}\left(\varphi_{i}\left(s\right),\varphi_{i}\left(t\right)\right).
\]
For each $i$ we have $\varphi_{i}\left(s\right),\varphi_{i}\left(t\right)\in X_{\mathrm{fin}}$.
Indeed, for every $n\ge0$ the diagonal recursion gives 
\[
u_{n+1}\left(s\right)=\sum^{m}_{j=1}u_{n}\left(\varphi_{j}\left(s\right)\right),
\]
hence $u_{n}\left(\varphi_{i}\left(s\right)\right)\le u_{n+1}\left(s\right)\le u_{\infty}\left(s\right)$.
Letting $n\to\infty$ yields $u_{\infty}\left(\varphi_{i}\left(s\right)\right)\le u_{\infty}\left(s\right)<\infty$,
and similarly for $t$. 

Thus \prettyref{lem:3-3} applies at the points $\varphi_{i}\left(s\right),\varphi_{i}\left(t\right)$,
and we may pass to the limit termwise in the finite sum: 
\begin{align*}
\lim_{n\to\infty}K_{n+1}\left(s,t\right) & =\sum^{m}_{i=1}\lim_{n\to\infty}K_{n}\left(\varphi_{i}\left(s\right),\varphi_{i}\left(t\right)\right)\\
 & =\sum^{m}_{i=1}K_{\infty}\left(\varphi_{i}\left(s\right),\varphi_{i}\left(t\right)\right)=\left(LK_{\infty}\right)\left(s,t\right).
\end{align*}
The left-hand side is $K_{\infty}\left(s,t\right)$, proving $\left(LK_{\infty}\right)\left(s,t\right)=K_{\infty}\left(s,t\right)$.

Since $K_{n+1}=LK_{n}\ge K_{n}$ and $K_{0}=K$, we have $K_{n}\ge K$
for all $n$ in the kernel order, hence $K_{\infty}\ge K$ on $X_{\mathrm{fin}}$
by passing to limits on finite Gram matrices as above.

For the minimality statement, let $J$ be a kernel on $X_{\mathrm{fin}}$
with $J\ge K$ and $LJ=J$. Then for every $n\ge0$, 
\[
J=L^{n}J\ge L^{n}K=K_{n}
\]
in the kernel order. Fix finitely many points $s_{1},\dots,s_{r}\in X_{\mathrm{fin}}$.
The corresponding Gram matrices satisfy $J\left[s_{1},\dots,s_{r}\right]\ge G_{n}$
for all $n$, and since $G_{n}\to G_{\infty}$ entrywise we obtain
$J\left[s_{1},\dots,s_{r}\right]\ge G_{\infty}$. Since the points
were arbitrary, $J\ge K_{\infty}$ on $X_{\mathrm{fin}}$ in the kernel
order. 
\end{proof}

We now give two consequences linking the set $X_{\mathrm{fin}}$ to
the resistance package from \prettyref{sec:2}. First, if $s\in X_{\mathrm{fin}}$
then the associated canonical network at $s$ has infinite effective
resistance to infinity, i.e.,$R_{N}\left(s\right)\to+\infty$. Second,
we isolate a levelwise localization condition on the diagonal increments
that is sufficient for $s\in X_{\mathrm{fin}}$. The idea is that
\prettyref{lem:telescoping} converts levelwise increment mass into
diagonal growth of the tower, so summability yields the monotone sequence
$\left(u_{n}\left(s\right)\right)$ to have a finite limit. 
\begin{prop}
\label{prop:3-5} Fix $s\in X_{\mathrm{fin}}$. Then 
\[
S\left(s\right)=\sum^{\infty}_{k=0}\frac{m^{k}}{u_{2k+1}\left(s\right)-u_{2k}\left(s\right)}=+\infty,
\]
with the convention that a zero denominator contributes $+\infty$.
Consequently, $R_{N}\left(s\right)\to+\infty$ and $\mathrm{Cap}_{N}\left(s\right)\to0$
as $N\to\infty$, hence $\mathrm{Cap}_{\infty}\left(s\right)=0$. 
\end{prop}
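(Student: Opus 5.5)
The argument reduces the statement to a summability fact about the alternating differences, and then uses the fact that a divergent series cannot be dominated by a summable one. Write $d_k:=u_{2k+1}(s)-u_{2k}(s)\ge 0$. By the monotonicity of the diagonal tower, the differences $u_{n+1}(s)-u_n(s)$ are nonnegative and telescope. Hence
\[
\sum_{k=0}^{N-1} d_k\le \sum_{n=0}^{2N-1}\bigl(u_{n+1}(s)-u_n(s)\bigr)=u_{2N}(s)-u_0(s)\le u_\infty(s)-u_0(s)<\infty,
\]
since $s\in X_{\mathrm{fin}}$. (This is also \eqref{eq:2.12} of \prettyref{lem:telescoping}, with the odd-index differences discarded.) So $\sum_k d_k<\infty$, and in particular $d_k\to 0$.

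Next, show that each term of $S(s)$ blows up. If some $d_k=0$, then by the stated convention $S(s)=+\infty$ immediately. Otherwise every $d_k>0$ and $d_k\to0$, so for all large $k$ we have $d_k\le 1$, and therefore
\[
\frac{m^k}{d_k}\ge m^k\ge 1 .
\]
The terms do not tend to zero, so the series diverges and $S(s)=+\infty$.

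This holds even in the degenerate case $m=1$: the bound $1/d_k\to\infty$ alone forces divergence. No subtler comparison, such as a Cauchy–Schwarz bound $\bigl(\sum_k 1\bigr)^2\le \sum_k d_k\,\sum_k 1/d_k$, is needed, although that bound would also work.

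Finally, apply part (2) of \prettyref{thm:lower-bound}. Since $S(s)=\infty$, it gives $R_N(s)\to\infty$ and $\mathrm{Cap}_N(s)\to0$. The monotonicity of $R_N$ noted in the proof of \prettyref{cor:capacity-criterion} identifies the limit as $R_\infty(s)=\infty$, so $\mathrm{Cap}_\infty(s)=0$; this step does not require \eqref{eq:2-18}.

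There is no real obstacle. The only points needing care are the zero-denominator convention and keeping indices straight when bounding the alternating sum by the full telescoping sum.
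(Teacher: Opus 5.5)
Your proof is correct and is essentially the paper's argument. Finiteness of $u_\infty(s)$ forces $u_{2k+1}(s)-u_{2k}(s)\to0$, so either some term of $S(s)$ is $+\infty$ by convention or the terms $m^k/(u_{2k+1}(s)-u_{2k}(s))$ fail to tend to zero, and part (2) of Theorem~\ref{thm:lower-bound} then yields $R_N(s)\to\infty$ and $\mathrm{Cap}_\infty(s)=0$; the only differences (bounding the alternating sum directly by $u_{2N}(s)-u_0(s)$, and using the eventual bound $d_k\le 1$ instead of $1/d_k\to\infty$) are cosmetic.
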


\begin{proof}
Set 
\[
d_{n}:=u_{n+1}\left(s\right)-u_{n}\left(s\right)\ge0.
\]
Since $s\in X_{\mathrm{fin}}$, the monotone limit $u_{\infty}\left(s\right)<\infty$
exists, hence 
\[
\sum^{\infty}_{n=0}d_{n}=\sum^{\infty}_{n=0}\left(u_{n+1}\left(s\right)-u_{n}\left(s\right)\right)=u_{\infty}\left(s\right)-u_{0}\left(s\right)<\infty.
\]
Therefore $d_{n}\to0$ as $n\to\infty$, and in particular $d_{2k}\to0$
as $k\to\infty$.

If $d_{2k}=u_{2k+1}\left(s\right)-u_{2k}\left(s\right)=0$ for some
$k$, then by convention the corresponding term in $S\left(s\right)$
is $+\infty$, hence $S\left(s\right)=+\infty$. Otherwise $d_{2k}>0$
for all $k$, and since $m^{k}\ge1$ we have 
\[
\frac{m^{k}}{u_{2k+1}\left(s\right)-u_{2k}\left(s\right)}=\frac{m^{k}}{d_{2k}}\ge\frac{1}{d_{2k}}\to+\infty.
\]
In particular, the terms of the defining series for $S\left(s\right)$
do not tend to $0$, so $S\left(s\right)=+\infty$.

Finally, \prettyref{thm:lower-bound} gives for every $N\ge1$, 
\[
R_{N}\left(s\right)\ge\sum^{N-1}_{k=0}\frac{m^{k}}{u_{2k+1}\left(s\right)-u_{2k}\left(s\right)}.
\]
Since the right-hand side diverges as $N\to\infty$, it follows that
$R_{N}\left(s\right)\to+\infty$. Therefore $\mathrm{Cap}_{N}\left(s\right)=1/R_{N}\left(s\right)\to0$,
and hence $\mathrm{Cap}_{\infty}\left(s\right)=0$. 
\end{proof}

\begin{rem}
\prettyref{prop:3-5} makes explicit a separation between the two
constructions. The invariant completion $K_{\infty}$ is built on
\[
X_{\mathrm{fin}}=\left\{ s\in X:u_{\infty}\left(s\right)<\infty\right\} ,
\]
and on this set the canonical tree network from \prettyref{sec:2}
has vanishing limiting capacity: 
\[
s\in X_{\mathrm{fin}}\ \Longrightarrow\ \mathrm{Cap}_{\infty}\left(s\right)=0.
\]

Accordingly, it is helpful to view \prettyref{sec:2} as organizing
the possible growth behavior of the diagonal tower. The present section
goes in a different direction. On $X_{\mathrm{fin}}$ the diagonal
finiteness gives a canonical $L$-invariant off-diagonal completion,
and the resulting kernel is characterized by a minimality property.
We now turn to a practical sufficient condition for $s\in X_{\mathrm{fin}}$,
stated directly in terms of levelwise increment masses. 
\end{rem}

\begin{thm}
\label{thm:3-6} Fix $s\in X$. For each $k\ge0$ define 
\[
\Delta_{k}\left(s\right):=\sum_{w\in W_{k}}\left(u_{k+1}\left(\varphi_{w}\left(s\right)\right)-u_{k}\left(\varphi_{w}\left(s\right)\right)\right)\in\left[0,\infty\right].
\]
If 
\begin{equation}
\sum^{\infty}_{k=0}\Delta_{k}\left(s\right)<\infty,\label{eq:3-2}
\end{equation}
then $s\in X_{\mathrm{fin}}$ and 
\begin{equation}
u_{\infty}\left(s\right)\le u_{0}\left(s\right)+\sum^{\infty}_{k=0}\Delta_{k}\left(s\right).\label{eq:3-3}
\end{equation}
\end{thm}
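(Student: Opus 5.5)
The plan is to reduce the hypothesis \prettyref{eq:3-2} to a statement about the scalar increments $d_{n}:=u_{n+1}\left(s\right)-u_{n}\left(s\right)\ge0$, and then telescope the monotone sequence $\left(u_{n}\left(s\right)\right)$.

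\textbf{Step 1 (identify $\Delta_{k}$).} By definition \prettyref{eq:2-5}, $\Delta_{k}\left(s\right)=\sum_{w\in W_{k}}a_{s}\left(w\right)$. \prettyref{lem:telescoping} then gives
\[
\Delta_{k}\left(s\right)=u_{2k+1}\left(s\right)-u_{2k}\left(s\right)=d_{2k}.
\]
So \prettyref{eq:3-2} says exactly that the even-indexed increments are summable.

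\textbf{Step 2 (telescope).} Since $\left(u_{n}\left(s\right)\right)$ is nondecreasing,
\[
u_{\infty}\left(s\right)=u_{0}\left(s\right)+\sum_{k\ge0}d_{2k}+\sum_{k\ge0}d_{2k+1}.
\]
Hence it suffices to control the odd-indexed increments $d_{2k+1}$ by the even ones.

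\textbf{Step 3 (structure of the increments).} Write $D_{0}:=LK-K\ge0$. Since $D_{n}=K_{n+1}-K_{n}=L^{n}D_{0}$, the same expansion used in the proof of \prettyref{lem:telescoping} gives
\[
d_{n}=\sum_{v\in W_{n}}d_{0}\left(\varphi_{v}\left(s\right)\right),\qquad d_{n+1}\left(s\right)=\sum_{i=1}^{m}d_{n}\left(\varphi_{i}\left(s\right)\right).
\]
The natural first attempt is to show $d_{2k+1}\le d_{2k}$ at $s$. If that holds, then $\sum_{n}d_{n}\le2\sum_{k}\Delta_{k}\left(s\right)<\infty$, and therefore $s\in X_{\mathrm{fin}}$.

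\textbf{Main obstacle.} The stated constant in \prettyref{eq:3-3} is the delicate part. It requires the odd increments to contribute nothing beyond $\sum_{k}\Delta_{k}$. Equivalently, one needs either $d_{2k+1}=0$, or a regrouping in which each $\Delta_{k}$ already accounts for two consecutive increments. I do not see such an identity from the positivity data alone. The recursion in Step 3 relates $d_{2k+1}$ at $s$ to $d_{2k}$ at the \emph{different} points $\varphi_{i}\left(s\right)$, and there is no evident monotonicity in $n$ at a fixed point.

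\textbf{Fallback.} I would first try to establish the comparison $d_{2k+1}\le d_{2k}$, or a uniform bound $d_{2k+1}\le C\,d_{2k}$, through the recursion in Step 3. If this succeeds, it yields finiteness of $u_{\infty}\left(s\right)$ together with the weaker bound $u_{\infty}\left(s\right)\le u_{0}\left(s\right)+\left(1+C\right)\sum_{k}\Delta_{k}\left(s\right)$. Before committing to the exact constant in \prettyref{eq:3-3}, I would test small examples (e.g.\ $m=1$ with a non-bijective $\varphi$) to see whether that constant can hold in general.
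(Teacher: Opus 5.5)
Your Steps 1--2 are correct, and your ``main obstacle'' is the right diagnosis. It is fatal, though, and not only for the constant in \eqref{eq:3-3}. The paper's proof gets past it only by asserting
\[
u_{2N}(s)-u_{0}(s)=\sum_{k=0}^{N-1}\bigl(u_{2k+1}(s)-u_{2k}(s)\bigr).
\]
This is not a telescoping identity: the left side equals $\sum_{k=0}^{N-1}(d_{2k}+d_{2k+1})$, so the odd increments you flagged are simply dropped.

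Your fallback cannot work either, because the statement itself is false. Take $m=1$, $X=\mathbb{N}_{0}$, $\varphi(n)=n+1$ (injective, not surjective) and $K(s,t)=\delta_{s,t}\lfloor s/2\rfloor$. Then $(LK-K)(s,t)=\delta_{s,t}\bigl(\lfloor (s+1)/2\rfloor-\lfloor s/2\rfloor\bigr)$ is positive definite, and $K_{n}(s,t)=\delta_{s,t}\lfloor (s+n)/2\rfloor$. Hence $u_{n}(0)=\lfloor n/2\rfloor$ and $\Delta_{k}(0)=u_{2k+1}(0)-u_{2k}(0)=0$ for all $k$. So hypothesis \eqref{eq:3-2} holds, yet $u_{\infty}(0)=\infty$. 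Here $d_{2k}=0$ and $d_{2k+1}=1$, so no comparison $d_{2k+1}\le C\,d_{2k}$ is available.

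If you also want finiteness to hold, use $K(s,t)=\delta_{s,t}(1-2^{-s})$. Then $u_{n}(0)=1-2^{-n}$ and $\sum_{k}\Delta_{k}(0)=\sum_{k}2^{-2k-1}=2/3$. But $u_{\infty}(0)=1>u_{0}(0)+2/3$, which violates \eqref{eq:3-3}. For general $m$, let every $\varphi_{i}$ be the shift and take $K(s,t)=\delta_{s,t}m^{-s}\lfloor s/2\rfloor$; again $u_{n}(0)=\lfloor n/2\rfloor$. So your plan to test small examples was the right move: carried out, it refutes the statement itself, not merely its constant.

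What is true is the two-step version. Set $\Delta'_{k}(s):=\sum_{w\in W_{k}}\bigl(u_{k+2}-u_{k}\bigr)(\varphi_{w}(s))$. The concatenation argument of Lemma~\ref{lem:telescoping} gives $\Delta'_{k}(s)=u_{2k+2}(s)-u_{2k}(s)$, hence $u_{\infty}(s)=u_{0}(s)+\sum_{k}\Delta'_{k}(s)$. Thus $s\in X_{\mathrm{fin}}$ exactly when this series converges. Equivalently, add to $\Delta_{k}(s)$ the odd-level term $\sum_{w\in W_{k+1}}(u_{k+1}-u_{k})(\varphi_{w}(s))=u_{2k+2}(s)-u_{2k+1}(s)$, which is precisely your $d_{2k+1}$.

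Corollary~\ref{cor:lq-localization-Xfin} inherits the same error: the first example satisfies its hypothesis with $A_{k}=0$ for every $k$, yet $u_{\infty}(0)=\infty$.
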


\begin{proof}
For each $k\ge0$, \prettyref{lem:telescoping} gives 
\[
\Delta_{k}\left(s\right)=\sum_{w\in W_{k}}\left(u_{k+1}\left(\varphi_{w}\left(s\right)\right)-u_{k}\left(\varphi_{w}\left(s\right)\right)\right)=u_{2k+1}\left(s\right)-u_{2k}\left(s\right).
\]
Summing in $k$ yields, for every $N\ge1$, 
\[
u_{2N}\left(s\right)-u_{0}\left(s\right)=\sum^{N-1}_{k=0}\left(u_{2k+1}\left(s\right)-u_{2k}\left(s\right)\right)=\sum^{N-1}_{k=0}\Delta_{k}\left(s\right).
\]
If \prettyref{eq:3-2} holds, the right-hand side is uniformly bounded
in $N$, hence $\left(u_{2N}\left(s\right)\right)_{N\ge0}$ is bounded
and increasing and therefore converges. Since $u_{2N}\left(s\right)\le u_{2N+1}\left(s\right)\le u_{2N+2}\left(s\right)$,
the full sequence $\left(u_{n}\left(s\right)\right)_{n\ge0}$ converges
as well, so $s\in X_{\mathrm{fin}}$. Letting $N\to\infty$ gives
\prettyref{eq:3-3}. 
\end{proof}

\begin{cor}
\label{cor:lq-localization-Xfin} Fix $s\in X$ and $q\in\left(1,\infty\right]$.
Suppose there is a sequence $\left(A_{k}\right)_{k\ge0}$ in $\left[0,\infty\right)$
such that for every $k\ge0$, 
\begin{equation}
m^{k\left(1-\frac{1}{q}\right)}\left(\sum_{w\in W_{k}}\left(u_{k+1}\left(\varphi_{w}\left(s\right)\right)-u_{k}\left(\varphi_{w}\left(s\right)\right)\right)^{q}\right)^{1/q}\le A_{k},\label{eq:3-4}
\end{equation}
with the usual interpretation for $q=\infty$, and assume $\sum^{\infty}_{k=0}A_{k}<\infty$.
Then $s\in X_{\mathrm{fin}}$ and 
\[
u_{\infty}\left(s\right)\le u_{0}\left(s\right)+\sum^{\infty}_{k=0}A_{k}.
\]
\end{cor}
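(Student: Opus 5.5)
The plan is to reduce the corollary to \prettyref{thm:3-6} by bounding each levelwise mass $\Delta_{k}\left(s\right)$ by $A_{k}$. Write $b_{w}:=u_{k+1}\left(\varphi_{w}\left(s\right)\right)-u_{k}\left(\varphi_{w}\left(s\right)\right)\ge0$ for $w\in W_{k}$, so that $\Delta_{k}\left(s\right)=\sum_{w\in W_{k}}b_{w}$, a finite sum of $\left|W_{k}\right|=m^{k}$ nonnegative terms. The finite hypothesis \prettyref{eq:3-4} forces every $b_{w}$ to be finite, since $A_{k}<\infty$.

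The key step is Hölder's inequality on the finite index set $W_{k}$ with exponents $q$ and its conjugate $q'=q/(q-1)$:
\[
\sum_{w\in W_{k}}b_{w}=\sum_{w\in W_{k}}1\cdot b_{w}\le\left|W_{k}\right|^{1/q'}\Big(\sum_{w\in W_{k}}b_{w}^{q}\Big)^{1/q}=m^{k\left(1-\frac{1}{q}\right)}\Big(\sum_{w\in W_{k}}b_{w}^{q}\Big)^{1/q}\le A_{k}.
\]
For $q=\infty$ the same bound reads $\sum_{w}b_{w}\le m^{k}\max_{w}b_{w}\le A_{k}$, which is the stated interpretation with $1/q=0$.

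Summing over $k$ then gives $\sum_{k}\Delta_{k}\left(s\right)\le\sum_{k}A_{k}<\infty$. This is exactly the hypothesis \prettyref{eq:3-2}, so \prettyref{thm:3-6} yields $s\in X_{\mathrm{fin}}$. Its estimate \prettyref{eq:3-3}, combined with the comparison above, gives $u_{\infty}\left(s\right)\le u_{0}\left(s\right)+\sum_{k}A_{k}$.

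There is no real obstacle. The only point to check is that the normalization $m^{k(1-1/q)}$ is exactly the Hölder constant $\left|W_{k}\right|^{1/q'}$, so that the hypothesis is precisely the $\ell^{1}$ bound needed.
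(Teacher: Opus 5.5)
Your H\"older step is correct, and your route is exactly the paper's: bound $\Delta_k(s)$ by $A_k$ via H\"older on $W_k$, then invoke Theorem~\ref{thm:3-6}. The gap is that last invocation, because Theorem~\ref{thm:3-6} is false, and the corollary fails with it. The proof of Theorem~\ref{thm:3-6} asserts $u_{2N}(s)-u_0(s)=\sum_{k<N}\bigl(u_{2k+1}(s)-u_{2k}(s)\bigr)$. But the left side also contains the increments $u_{2k+2}(s)-u_{2k+1}(s)$. Only $\sum_{k<N}\Delta_k(s)\le u_{2N}(s)-u_0(s)$ holds, and that inequality points the wrong way. By Lemma~\ref{lem:telescoping}, $\Delta_k(s)=u_{2k+1}(s)-u_{2k}(s)$ records only every other step of the diagonal tower, and \eqref{eq:3-4} says nothing about the other steps.

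Here is a concrete counterexample. Let $X=\mathbb{N}_0$, let $\varphi_i(x)=x+1$ for all $i$, and let $K(s,t)=m^{-s}h(s)\delta_{s,t}$, where $h(0)=1$, $h(2k+1)=h(2k)+2^{-k}$ and $h(2k+2)=h(2k+1)+1$. Then:
\begin{itemize}
\item $(LK-K)(s,t)=m^{-s}\bigl(h(s+1)-h(s)\bigr)\delta_{s,t}$ is positive definite, so the subinvariance hypothesis holds;
\item $u_n(s)=m^{-s}h(s+n)$;
\item every $w\in W_k$ has $\varphi_w(0)=k$, with $u_{k+1}(k)-u_k(k)=m^{-k}2^{-k}$.
\end{itemize}
So the left side of \eqref{eq:3-4} at $s=0$ equals $2^{-k}=:A_k$ for every $q\in(1,\infty]$, and $\sum_kA_k=2$. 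Yet $u_{2N}(0)=h(2N)\ge N+1\to\infty$, i.e.\ $0\notin X_{\mathrm{fin}}$. No argument can succeed without an extra hypothesis.

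A possible repair: the concatenation argument of Lemma~\ref{lem:telescoping} also gives $\sum_{w\in W_{k+1}}\bigl(u_{k+1}(\varphi_w(s))-u_k(\varphi_w(s))\bigr)=u_{2k+2}(s)-u_{2k+1}(s)$. Suppose you additionally assume the analogous bound on these shifted level sums, normalized by $m^{(k+1)(1-1/q)}$ and with summable bounds $A'_k$. Then your H\"older step applies verbatim and yields $u_\infty(s)\le u_0(s)+\sum_k(A_k+A'_k)$.
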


\begin{proof}
By Hölder on $W_{k}$, 
\[
\Delta_{k}\left(s\right)\le m^{k\left(1-\frac{1}{q}\right)}\left(\sum_{w\in W_{k}}\left(u_{k+1}\left(\varphi_{w}\left(s\right)\right)-u_{k}\left(\varphi_{w}\left(s\right)\right)\right)^{q}\right)^{1/q}\le A_{k}.
\]
Thus $\sum_{k}\Delta_{k}\left(s\right)\le\sum_{k}A_{k}<\infty$, and
\prettyref{thm:3-6} applies. 
\end{proof}

\section{Splitting isometry on $\mathcal{H}\left(K_{\infty}\right)$}\label{sec:4}

We now make the $L$-invariance of $K_{\infty}$ act on the reproducing
kernel Hilbert space (RKHS) of $K_{\infty}$. This produces a canonical
isometric splitting determined by the maps $\left(\varphi_{i}\right)$,
together with explicit formulas for its iterates indexed by words.

Let $\mathcal{H}_{\infty}:=\mathcal{H}\left(K_{\infty}\right)$ denote
the RKHS of $K_{\infty}$ on $X_{\mathrm{fin}}$. For each $s\in X_{\mathrm{fin}}$
write the kernel section
\[
k_{s}\left(\cdot\right):=K_{\infty}\left(\cdot,s\right)\in\mathcal{H}_{\infty}.
\]
Then $\left\langle f,k_{s}\right\rangle =f\left(s\right)$ for all
$f\in\mathcal{H}_{\infty}$, and 
\[
\left\langle k_{s},k_{t}\right\rangle =K_{\infty}\left(s,t\right),\quad\left\Vert k_{s}\right\Vert ^{2}=K_{\infty}\left(s,s\right)=u_{\infty}\left(s\right).
\]

\begin{lem}
\label{lem:4-1}For every $n\in\mathbb{N}_{0}$ and all $s,t\in X_{\mathrm{fin}}$,
\begin{equation}
K_{\infty}\left(s,t\right)=\sum_{w\in W_{n}}K_{\infty}\left(\varphi_{w}\left(s\right),\varphi_{w}\left(t\right)\right).\label{eq:4-1}
\end{equation}
\end{lem}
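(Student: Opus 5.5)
I will prove \prettyref{eq:4-1} by induction on $n$, using only the one-step invariance from \prettyref{thm:3-4}(3) and the fact that $X_{\mathrm{fin}}$ is forward invariant under every $\varphi_{i}$. The only point needing care is that every term in \prettyref{eq:4-1} must be defined. Since $K_{\infty}$ lives only on $X_{\mathrm{fin}}\times X_{\mathrm{fin}}$, I first need $\varphi_{w}(s)\in X_{\mathrm{fin}}$ for every word $w$ and every $s\in X_{\mathrm{fin}}$.

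This is already contained in the proof of \prettyref{thm:3-4}. There the diagonal recursion $u_{n+1}(s)=\sum_{j}u_{n}(\varphi_{j}(s))$ gave $u_{\infty}(\varphi_{i}(s))\le u_{\infty}(s)<\infty$. Iterating along the letters of $w=i_{1}\cdots i_{n}$, with $\varphi_{w}=\varphi_{i_{n}}\circ\cdots\circ\varphi_{i_{1}}$, gives $u_{\infty}(\varphi_{w}(s))\le u_{\infty}(s)$. Hence $\varphi_{w}(X_{\mathrm{fin}})\subset X_{\mathrm{fin}}$ for all $w\in W_{*}$.

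For the induction, the case $n=0$ is trivial because $W_{0}=\{\emptyset\}$ and $\varphi_{\emptyset}=\mathrm{id}$. Assume \prettyref{eq:4-1} holds for some $n$ and all pairs of points of $X_{\mathrm{fin}}$. For each $w\in W_{n}$, apply $LK_{\infty}=K_{\infty}$ at the pair $(\varphi_{w}(s),\varphi_{w}(t))\in X_{\mathrm{fin}}\times X_{\mathrm{fin}}$:
\[
K_{\infty}\left(\varphi_{w}\left(s\right),\varphi_{w}\left(t\right)\right)=\sum_{i=1}^{m}K_{\infty}\left(\varphi_{i}\circ\varphi_{w}\left(s\right),\varphi_{i}\circ\varphi_{w}\left(t\right)\right).
\]
With the stated composition convention, $\varphi_{i}\circ\varphi_{w}=\varphi_{wi}$. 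The map $(w,i)\mapsto wi$ is a bijection $W_{n}\times\{1,\dots,m\}\to W_{n+1}$. Summing the display over $w\in W_{n}$ and using the induction hypothesis therefore gives \prettyref{eq:4-1} at level $n+1$.

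An alternative route is to pass to the limit $N\to\infty$ in $K_{N+n}=L^{n}K_{N}$, that is, in $K_{N+n}(s,t)=\sum_{w\in W_{n}}K_{N}(\varphi_{w}(s),\varphi_{w}(t))$. This is a finite sum of convergent terms by \prettyref{lem:3-3}. Either route works, and I expect no real obstacle beyond the domain check in the first step.
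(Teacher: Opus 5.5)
Your proof is correct and is essentially the paper's argument. The paper iterates $LK_{\infty}=K_{\infty}$ from Theorem~\ref{thm:3-4} to get $L^{n}K_{\infty}=K_{\infty}$ and expands $L^{n}$ as a sum over $W_{n}$, which is exactly what your induction unpacks; your explicit check that $\varphi_{w}(X_{\mathrm{fin}})\subset X_{\mathrm{fin}}$ fills in a domain point the paper leaves implicit, and your alternative limit route via $K_{N+n}=L^{n}K_{N}$ and Lemma~\ref{lem:3-3} is also valid.
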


\begin{proof}
By \prettyref{thm:3-4} we have $LK_{\infty}=K_{\infty}$ on $X_{\mathrm{fin}}$,
hence $L^{n}K_{\infty}=K_{\infty}$ for every $n\ge0$. Expanding
$L^{n}$ gives, for $s,t\in X_{\mathrm{fin}}$, 
\[
\left(L^{n}K_{\infty}\right)\left(s,t\right)=\sum_{w\in W_{n}}K_{\infty}\left(\varphi_{w}\left(s\right),\varphi_{w}\left(t\right)\right),
\]
and this equals $K_{\infty}\left(s,t\right)$. 
\end{proof}

We now turn \prettyref{eq:4-1} into an operator identity on $\mathcal{H}_{\infty}$.
For each $n\in\mathbb{N}_{0}$ define the Hilbert space 
\[
\mathcal{H}^{\left(n\right)}_{\infty}:=\bigoplus_{w\in W_{n}}\mathcal{H}_{\infty},
\]
whose elements we write as vectors $\left(f_{w}\right)_{w\in W_{n}}$,
with inner product 
\[
\left\langle \left(f_{w}\right)_{w\in W_{n}},\left(g_{w}\right)_{w\in W_{n}}\right\rangle :=\sum_{w\in W_{n}}\left\langle f_{w},g_{w}\right\rangle .
\]
In particular, $\mathcal{H}^{\left(0\right)}_{\infty}:=\mathcal{H}_{\infty}$.
\begin{lem}
\label{lem:4-2}For each $n\in\mathbb{N}_{0}$ there exists a unique
linear map 
\[
V_{n}:\mathcal{H}_{\infty}\to\mathcal{H}^{\left(n\right)}_{\infty}
\]
such that for every $s\in X_{\mathrm{fin}}$, 
\begin{equation}
V_{n}k_{s}=\left(k_{\varphi_{w}\left(s\right)}\right)_{w\in W_{n}}.\label{eq:4-2}
\end{equation}
Moreover, $V_{n}$ is an isometry, hence $\left\Vert V_{n}f\right\Vert =\left\Vert f\right\Vert $
for all $f\in\mathcal{H}_{\infty}$. 
\end{lem}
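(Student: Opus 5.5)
The plan is to define $V_{n}$ first on the dense subspace $\mathcal{H}^{0}_{\infty}:=\mathrm{span}\{k_{s}:s\in X_{\mathrm{fin}}\}$. Next, show that it preserves inner products there. Finally, extend by continuity. This is the standard construction of an isometry from a kernel identity.

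\textbf{Well-definedness.} First note that the right-hand side of \prettyref{eq:4-2} lives in $\mathcal{H}^{(n)}_{\infty}$. For $s\in X_{\mathrm{fin}}$, the argument in the proof of \prettyref{thm:3-4} gives $u_{\infty}(\varphi_{i}(s))\le u_{\infty}(s)$. Iterating this shows $\varphi_{w}(s)\in X_{\mathrm{fin}}$ for every word $w$, so each $k_{\varphi_{w}(s)}$ is a genuine element of $\mathcal{H}_{\infty}$. Now define, for finite sums,
\[
V_{n}\Big(\sum_{\alpha}c_{\alpha}k_{s_{\alpha}}\Big):=\Big(\sum_{\alpha}c_{\alpha}k_{\varphi_{w}(s_{\alpha})}\Big)_{w\in W_{n}}.
\]
The key computation uses \prettyref{lem:4-1} and runs as follows:
\[
\Big\|\Big(\sum_{\alpha}c_{\alpha}k_{\varphi_{w}(s_{\alpha})}\Big)_{w}\Big\|^{2}=\sum_{w\in W_{n}}\sum_{\alpha,\beta}\overline{c_{\beta}}c_{\alpha}K_{\infty}(\varphi_{w}(s_{\beta}),\varphi_{w}(s_{\alpha}))=\sum_{\alpha,\beta}\overline{c_{\beta}}c_{\alpha}K_{\infty}(s_{\beta},s_{\alpha})=\Big\|\sum_{\alpha}c_{\alpha}k_{s_{\alpha}}\Big\|^{2}.
\]
Apply this to a difference of two representations of the same vector; the difference has norm zero. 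Hence its image has norm zero, and the formula is independent of the representation. So $V_{n}$ is a well-defined linear isometry on $\mathcal{H}^{0}_{\infty}$.

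\textbf{Extension and uniqueness.} Since $\mathcal{H}^{0}_{\infty}$ is dense in $\mathcal{H}_{\infty}$, $V_{n}$ extends uniquely to a bounded linear isometry on all of $\mathcal{H}_{\infty}$. Norm preservation passes to limits. Uniqueness holds because any linear map satisfying \prettyref{eq:4-2} agrees with ours on the span. Here I read ``unique'' as among bounded (or continuous) linear maps, which is the implicit convention; an isometry is automatically bounded.

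\textbf{Main obstacle.} The only genuinely delicate points are the well-definedness step and the check that $\varphi_{w}$ maps $X_{\mathrm{fin}}$ into itself. Both are handled above: the first by the isometry computation, the second by the diagonal recursion $u_{n+1}(s)=\sum_{j}u_{n}(\varphi_{j}(s))$ already used in \prettyref{thm:3-4}. The case $n=0$ is trivial, since $V_{0}=I$.
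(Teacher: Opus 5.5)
Your proposal is correct and follows the same route as the paper. You define $V_{n}$ on the span of kernel sections, use \prettyref{lem:4-1} to show inner products are preserved, and extend by density. Your version is slightly more careful than the paper's in three respects: the paper asserts existence ``by linearity and density'' before doing the inner-product computation that well-definedness actually requires, it leaves implicit that $\varphi_{w}(X_{\mathrm{fin}})\subset X_{\mathrm{fin}}$, and it does not say that uniqueness is meant among bounded maps.
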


\begin{proof}
Existence and uniqueness follow by linearity and density of the linear
span of $\left\{ k_{s}:s\in X_{\mathrm{fin}}\right\} $ in $\mathcal{H}_{\infty}$. 

To prove that $V_{n}$ is an isometry, it suffices to check preservation
of inner products on kernel vectors. Fix $s,t\in X_{\mathrm{fin}}$.
Using \prettyref{eq:4-2} and the definition of the direct sum inner
product, 
\[
\left\langle V_{n}k_{t},V_{n}k_{s}\right\rangle =\sum_{w\in W_{n}}\left\langle k_{\varphi_{w}\left(t\right)},k_{\varphi_{w}\left(s\right)}\right\rangle =\sum_{w\in W_{n}}K_{\infty}\left(\varphi_{w}\left(s\right),\varphi_{w}\left(t\right)\right).
\]

By \prettyref{eq:4-1} the last sum equals $K_{\infty}\left(s,t\right)$,
which in turn equals $\left\langle k_{t},k_{s}\right\rangle $. Hence
$\left\langle V_{n}k_{t},V_{n}k_{s}\right\rangle =\left\langle k_{t},k_{s}\right\rangle $
for all $s,t$. It follows that $\left\langle V_{n}f,V_{n}g\right\rangle =\left\langle f,g\right\rangle $
for all $f,g\in\mathcal{H}_{\infty}$, so $V_{n}$ is an isometry. 
\end{proof}

The case $n=1$ gives the basic splitting map 
\[
V:=V_{1}:\mathcal{H}_{\infty}\to\mathcal{H}^{\left(1\right)}_{\infty}=\mathcal{H}^{\oplus m}_{\infty},\qquad Vk_{s}=\left(k_{\varphi_{1}\left(s\right)},\dots,k_{\varphi_{m}\left(s\right)}\right).
\]
Since $V$ is an isometry, its adjoint $V^{*}:\mathcal{H}^{\oplus m}_{\infty}\to\mathcal{H}_{\infty}$
is a coisometry. It is convenient to have an explicit formula for
$V^{*}$ on the dense subspace generated by kernel sections.
\begin{lem}
\label{lem:4-3}For $f=\left(f_{1},\dots,f_{m}\right)\in\mathcal{H}^{\oplus m}_{\infty}$
and $s\in X_{\mathrm{fin}}$,
\begin{equation}
\left(V^{*}f\right)\left(s\right)=\sum^{m}_{i=1}f_{i}\left(\varphi_{i}\left(s\right)\right).\label{eq:4-3}
\end{equation}
\end{lem}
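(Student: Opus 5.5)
The plan is to compute $V^{*}f$ through the reproducing property: its value at $s$ is its inner product with $k_{s}$, and we move $V$ across that inner product. Fix $f=(f_{1},\dots,f_{m})\in\mathcal{H}^{\oplus m}_{\infty}$ and $s\in X_{\mathrm{fin}}$. Since $V^{*}f\in\mathcal{H}_{\infty}$, the reproducing property on $\mathcal{H}_{\infty}$ gives
\[
\left(V^{*}f\right)\left(s\right)=\left\langle V^{*}f,k_{s}\right\rangle =\left\langle f,Vk_{s}\right\rangle _{\mathcal{H}^{\oplus m}_{\infty}}.
\]
Next we insert the defining formula \prettyref{eq:4-2} with $n=1$, namely $Vk_{s}=\left(k_{\varphi_{1}(s)},\dots,k_{\varphi_{m}(s)}\right)$, and expand the direct-sum inner product:
\[
\left\langle f,Vk_{s}\right\rangle =\sum_{i=1}^{m}\left\langle f_{i},k_{\varphi_{i}(s)}\right\rangle =\sum_{i=1}^{m}f_{i}\left(\varphi_{i}\left(s\right)\right).
\]
The last step applies the reproducing property of $\mathcal{H}_{\infty}$ once more, now at each point $\varphi_{i}(s)$.

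The only point needing care is that $\varphi_{i}(s)$ lies in $X_{\mathrm{fin}}$, since only then is $k_{\varphi_{i}(s)}$ a kernel section in $\mathcal{H}_{\infty}$ and $f_{i}(\varphi_{i}(s))$ defined. This is exactly the invariance shown in the proof of \prettyref{thm:3-4}: there $u_{\infty}(\varphi_{i}(s))\le u_{\infty}(s)<\infty$. The same fact is already implicit in \prettyref{lem:4-2}, since without it \prettyref{eq:4-2} would not make sense.

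The only other ingredient is the existence of $V^{*}$, and that is automatic because $V$ is a bounded (isometric) operator by \prettyref{lem:4-2}. No density argument is needed, because the computation holds directly for arbitrary $f$. The identity \prettyref{eq:4-3} therefore holds for all $f$ and all $s$, and I do not expect any genuine obstacle here.
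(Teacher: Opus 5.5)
Your proposal is correct and is essentially the paper's proof: evaluate $V^{*}f$ at $s$ via $\langle V^{*}f,k_{s}\rangle=\langle f,Vk_{s}\rangle$, expand the direct-sum inner product using $Vk_{s}=(k_{\varphi_{i}(s)})_{i=1}^{m}$, and apply the reproducing property at each $\varphi_{i}(s)$. Your extra remark that $\varphi_{i}(s)\in X_{\mathrm{fin}}$, via $u_{\infty}(\varphi_{i}(s))\le u_{\infty}(s)$ from the proof of \prettyref{thm:3-4}, makes explicit a point the paper leaves implicit.
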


\begin{proof}
By definition of the adjoint and the reproducing property, 
\[
\left\langle V^{*}f,k_{s}\right\rangle =\left\langle f,Vk_{s}\right\rangle =\left\langle \left(f_{i}\right)^{m}_{i=1},\left(k_{\varphi_{i}\left(s\right)}\right)^{m}_{i=1}\right\rangle =\sum^{m}_{i=1}\left\langle f_{i},k_{\varphi_{i}\left(s\right)}\right\rangle .
\]
Since $\left\langle f_{i},k_{x}\right\rangle =f_{i}\left(x\right)$
for $x\in X_{\mathrm{fin}}$, this is equivalent to \prettyref{eq:4-3}.
\end{proof}

We next relate the iterates $V_{n}$ to $V$ itself. Identify $\mathcal{H}^{\left(n+1\right)}_{\infty}$
with $(\mathcal{H}^{\left(n\right)}_{\infty})^{\oplus m}$ by grouping
coordinates according to the first letter: for each $v\in W_{n+1}$
write $v=iw$ with $i\in\left\{ 1,\dots,m\right\} $ and $w\in W_{n}$,
and regard a vector in $\mathcal{H}^{\left(n+1\right)}_{\infty}$
as $\left(F_{i}\right)^{m}_{i=1}$ with $F_{i}\in\mathcal{H}^{\left(n\right)}_{\infty}$
given by $\left(F_{i}\right)_{w}=f_{iw}$.
\begin{lem}
\label{lem:4-4}For every $n\ge0$ one has 
\begin{equation}
V_{n+1}=\left(V_{n}\oplus\cdots\oplus V_{n}\right)V,\label{eq:4-4}
\end{equation}
where $\left(V_{n}\oplus\cdots\oplus V_{n}\right)$ denotes the direct
sum of $m$ copies of $V_{n}$ acting from $\mathcal{H}^{\oplus m}_{\infty}$
to $(\mathcal{H}^{\left(n\right)}_{\infty})^{\oplus m}\cong\mathcal{H}^{\left(n+1\right)}_{\infty}$.
In particular, for every $s\in X_{\mathrm{fin}}$, 
\[
V_{n+1}k_{s}=\left(k_{\varphi_{v}\left(s\right)}\right)_{v\in W_{n+1}}.
\]
\end{lem}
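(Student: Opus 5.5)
Both sides of \prettyref{eq:4-4} are bounded linear maps $\mathcal{H}_{\infty}\to\mathcal{H}^{(n+1)}_{\infty}$, so the plan is to check that they agree on the kernel sections $k_{s}$, $s\in X_{\mathrm{fin}}$. Agreement on all of $\mathcal{H}_{\infty}$ then follows from linearity, continuity, and density of the span of $\{k_{s}\}$. Boundedness is immediate: $V=V_{1}$ and $V_{n}$ are isometries by \prettyref{lem:4-2}, and a direct sum of $m$ copies of an isometry is again an isometry, so the right-hand side is an isometry.

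\textbf{Step 1: evaluate the right-hand side on $k_{s}$.} Fix $s\in X_{\mathrm{fin}}$. By \prettyref{eq:4-2} with $n=1$,
\[
Vk_{s}=\left(k_{\varphi_{1}(s)},\dots,k_{\varphi_{m}(s)}\right).
\]
Each $\varphi_{i}(s)$ lies in $X_{\mathrm{fin}}$, as shown in the proof of \prettyref{thm:3-4}, so \prettyref{eq:4-2} applies at level $n$ to each coordinate. This gives
\[
\left(V_{n}\oplus\cdots\oplus V_{n}\right)Vk_{s}=\left(F_{i}\right)_{i=1}^{m},\qquad \left(F_{i}\right)_{w}=k_{\varphi_{w}(\varphi_{i}(s))}\quad(w\in W_{n}).
\]

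\textbf{Step 2: match indices with $V_{n+1}k_{s}$.} The only real point is the word convention. Since $\varphi_{w}=\varphi_{i_{n}}\circ\cdots\circ\varphi_{i_{1}}$, the first letter is applied first. Hence for $v=iw$ with $i\in\{1,\dots,m\}$ and $w\in W_{n}$,
\[
\varphi_{iw}=\varphi_{w}\circ\varphi_{i},\qquad\text{so}\qquad \varphi_{w}(\varphi_{i}(s))=\varphi_{iw}(s).
\]
Under the stated identification of $\mathcal{H}^{(n+1)}_{\infty}$ with $(\mathcal{H}^{(n)}_{\infty})^{\oplus m}$, given by grouping according to the first letter, the vector $(F_{i})$ from Step 1 is exactly $(k_{\varphi_{v}(s)})_{v\in W_{n+1}}$. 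By \prettyref{eq:4-2} at level $n+1$, this is $V_{n+1}k_{s}$.

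\textbf{Step 3: extend from kernel sections.} Both sides are linear and agree on every $k_{s}$, hence on the span of the kernel sections. Both are bounded and this span is dense, so they agree on $\mathcal{H}_{\infty}$, which proves \prettyref{eq:4-4}.

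The displayed formula for $V_{n+1}k_{s}$ is just \prettyref{eq:4-2} at level $n+1$, now recovered recursively through \prettyref{eq:4-4}. I expect no genuine obstacle. The only place needing care is Step 2: the grouping must be by the first letter rather than the last, and this is forced by the composition order in the definition of $\varphi_{w}$.
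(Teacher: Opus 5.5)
Your proposal is correct and follows the paper's proof: check \eqref{eq:4-4} on the kernel sections $k_{s}$, using $\varphi_{w}\circ\varphi_{i}=\varphi_{iw}$ together with the first-letter grouping, and then extend by density. Your explicit remarks that both sides are bounded and that $\varphi_{i}(s)\in X_{\mathrm{fin}}$ fill in points the paper leaves implicit.
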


\begin{proof}
It suffices to verify \prettyref{eq:4-4} on kernel vectors. Fix $s\in X_{\mathrm{fin}}$.
Then 
\[
Vk_{s}=\left(k_{\varphi_{i}\left(s\right)}\right)^{m}_{i=1}.
\]
Applying $\left(V_{n}\oplus\cdots\oplus V_{n}\right)$ gives the vector
whose $i$-th block equals 
\[
V_{n}k_{\varphi_{i}\left(s\right)}=\left(k_{\varphi_{w}\left(\varphi_{i}\left(s\right)\right)}\right)_{w\in W_{n}}=\left(k_{\varphi_{iw}\left(s\right)}\right)_{w\in W_{n}}.
\]
Under the identification $(\mathcal{H}^{\left(n\right)}_{\infty})^{\oplus m}\cong\mathcal{H}^{\left(n+1\right)}_{\infty}$
this is  $\left(k_{\varphi_{v}\left(s\right)}\right)_{v\in W_{n+1}}=V_{n+1}k_{s}$,
as claimed. 
\end{proof}

Finally, we give the norm identity associated to $V_{n}$, which is
the Hilbert space form of \prettyref{eq:4-1}. For $s,t\in X_{\mathrm{fin}}$,
\[
\left\langle V_{n}k_{t},V_{n}k_{s}\right\rangle =\sum_{w\in W_{n}}K_{\infty}\left(\varphi_{w}\left(s\right),\varphi_{w}\left(t\right)\right)=K_{\infty}\left(s,t\right),
\]
and in particular, 
\begin{equation}
\sum_{w\in W_{n}}K_{\infty}\left(\varphi_{w}\left(s\right),\varphi_{w}\left(s\right)\right)=\left\Vert V_{n}k_{s}\right\Vert ^{2}=\left\Vert k_{s}\right\Vert ^{2}=u_{\infty}\left(s\right).\label{eq:4-5}
\end{equation}
When $s\in X_{\mathrm{pos}}$, it is sometimes convenient to normalize
$k_{s}$ by setting $\widetilde{k}_{s}:=u_{\infty}\left(s\right)^{-1/2}k_{s}$,
so that $\Vert\widetilde{k}_{s}\Vert=1$ and 
\[
V_{n}\widetilde{k}_{s}=\left(u_{\infty}\left(s\right)^{-1/2}k_{\varphi_{w}\left(s\right)}\right)_{w\in W_{n}}.
\]

We close this section by making explicit the word operators underlying
the maps $V_{n}$ and deriving the associated Parseval identities.
This gives a tree-indexed packet system in $\mathcal{H}_{\infty}$.
\begin{lem}
\label{lem:4-5} Let $V=V_{1}:\mathcal{H}_{\infty}\to\mathcal{H}^{\oplus m}_{\infty}$
be the splitting isometry from above. For $i\in\left\{ 1,\dots,m\right\} $
let $S_{i}:\mathcal{H}_{\infty}\to\mathcal{H}_{\infty}$ be the coordinate
operators defined by 
\[
Vf=\left(S_{1}f,\dots,S_{m}f\right)\qquad\left(f\in\mathcal{H}_{\infty}\right).
\]
For a word $w=i_{1}\cdots i_{n}\in W_{n}$ set 
\[
S_{w}:=S_{i_{n}}\cdots S_{i_{1}}.
\]
Then for every $n\ge0$ and every $f\in\mathcal{H}_{\infty}$, 
\begin{equation}
V_{n}f=\left(S_{w}f\right)_{w\in W_{n}}.\label{eq:4-6}
\end{equation}
In particular, for every $s\in X_{\mathrm{fin}}$ and $w\in W_{n}$,
\[
S_{w}k_{s}=k_{\varphi_{w}\left(s\right)}.
\]
\end{lem}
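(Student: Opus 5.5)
The plan is induction on $n$, using the factorization of \prettyref{lem:4-4}. First, the kernel-level formula $S_{i}k_{s}=k_{\varphi_{i}\left(s\right)}$ follows at once from the definition of $S_{i}$ together with $Vk_{s}=\left(k_{\varphi_{1}\left(s\right)},\dots,k_{\varphi_{m}\left(s\right)}\right)$. Each $S_{i}$ is bounded (indeed a contraction), since $\left\Vert S_{i}f\right\Vert \le\left\Vert Vf\right\Vert =\left\Vert f\right\Vert $. Hence every $S_{w}$ is a bounded operator on $\mathcal{H}_{\infty}$, and the map $f\mapsto\left(S_{w}f\right)_{w\in W_{n}}$ is bounded and linear into $\mathcal{H}^{\left(n\right)}_{\infty}$.

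For the kernel vectors, I would show $S_{w}k_{s}=k_{\varphi_{w}\left(s\right)}$ by induction on $|w|$. Write $w=i_{1}\cdots i_{n}$ and $w'=i_{1}\cdots i_{n-1}$. Then
\[
S_{w}k_{s}=S_{i_{n}}S_{w'}k_{s}=S_{i_{n}}k_{\varphi_{w'}\left(s\right)}=k_{\varphi_{i_{n}}\left(\varphi_{w'}\left(s\right)\right)}=k_{\varphi_{w}\left(s\right)}.
\]
This step uses $\varphi_{w}=\varphi_{i_{n}}\circ\varphi_{w'}$, which matches the paper's convention $\varphi_{w}=\varphi_{i_{n}}\circ\cdots\circ\varphi_{i_{1}}$. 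It also uses the fact, established in the proof of \prettyref{thm:3-4}, that $X_{\mathrm{fin}}$ is invariant under each $\varphi_{i}$, so that all the kernel sections appearing here are defined.

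Comparing with \prettyref{eq:4-2}, the operators $V_{n}$ and $f\mapsto\left(S_{w}f\right)_{w}$ agree on every $k_{s}$. Both are bounded and linear, and the span of $\left\{ k_{s}\right\} $ is dense, so they agree on all of $\mathcal{H}_{\infty}$. This gives \prettyref{eq:4-6}. The case $n=0$ is trivial, because $S_{\emptyset}=I$ and $V_{0}=I$.

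I expect no real obstacle here. The only point needing care is that the ordering convention in $S_{w}=S_{i_{n}}\cdots S_{i_{1}}$ matches the composition order in $\varphi_{w}$, and the induction above checks this directly. The result could alternatively be phrased through \prettyref{lem:4-4}, peeling off the first letter, but the direct kernel-vector argument avoids having to track block identifications.
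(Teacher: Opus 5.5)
Your proposal is correct and follows essentially the same route as the paper. You derive $S_{i}k_{s}=k_{\varphi_{i}(s)}$ from the definition of $V$, iterate using $\varphi_{w}=\varphi_{i_{n}}\circ\varphi_{w'}$ to get $S_{w}k_{s}=k_{\varphi_{w}(s)}$, match this with the defining formula for $V_{n}$ on kernel sections, and extend by density and continuity. Your explicit bound $\|S_{i}f\|\le\|Vf\|=\|f\|$ justifies the continuity that the paper's proof only asserts.
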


\begin{proof}
By definition of $V$ and the splitting identity above, for $s\in X_{\mathrm{fin}}$
we have 
\[
Vk_{s}=\left(k_{\varphi_{1}\left(s\right)},\dots,k_{\varphi_{m}\left(s\right)}\right),
\]
so $S_{i}k_{s}=k_{\varphi_{i}\left(s\right)}$ for each $i$. By iteration,
for $w=i_{1}\cdots i_{n}\in W_{n}$ we obtain 
\[
S_{w}k_{s}=S_{i_{n}}\cdots S_{i_{1}}k_{s}=k_{\varphi_{i_{n}}\circ\cdots\circ\varphi_{i_{1}}\left(s\right)}=k_{\varphi_{w}\left(s\right)}.
\]
On the other hand, by definition of $V_{n}$ we have 
\[
V_{n}k_{s}=\left(k_{\varphi_{w}\left(s\right)}\right)_{w\in W_{n}}.
\]
Thus $V_{n}k_{s}=\left(S_{w}k_{s}\right)_{w\in W_{n}}$ for every
$s\in X_{\mathrm{fin}}$. Since the linear span of $\left\{ k_{s}:s\in X_{\mathrm{fin}}\right\} $
is dense in $\mathcal{H}_{\infty}$ and both sides depend linearly
and continuously on $f$, the identity \prettyref{eq:4-6} holds for
all $f\in\mathcal{H}_{\infty}$. 
\end{proof}

\begin{prop}
\label{prop:4-6}For every $n\ge0$ and every $f\in\mathcal{H}_{\infty}$,
\begin{equation}
\sum_{w\in W_{n}}\left\Vert S_{w}f\right\Vert ^{2}=\left\Vert f\right\Vert ^{2}.\label{eq:4-7}
\end{equation}
Equivalently, 
\begin{equation}
\sum_{w\in W_{n}}S^{*}_{w}S_{w}=I_{\mathcal{H}_{\infty}}.\label{eq:4-8}
\end{equation}
\end{prop}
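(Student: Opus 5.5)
The plan is to combine \prettyref{lem:4-2} with \prettyref{lem:4-5}. By \prettyref{lem:4-5}, for every $n\ge0$ and $f\in\mathcal{H}_{\infty}$ we have $V_{n}f=\left(S_{w}f\right)_{w\in W_{n}}$. Taking norms in the direct sum $\mathcal{H}^{\left(n\right)}_{\infty}$ and using its defining inner product gives
\[
\left\Vert V_{n}f\right\Vert ^{2}=\sum_{w\in W_{n}}\left\Vert S_{w}f\right\Vert ^{2}.
\]
\prettyref{lem:4-2} says that $V_{n}$ is an isometry, so the left-hand side equals $\left\Vert f\right\Vert ^{2}$. This is \prettyref{eq:4-7}.

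The case $n=0$ should be checked separately. Here $W_{0}=\left\{ \emptyset\right\}$, $S_{\emptyset}=I$ as the empty product, and $V_{0}=I$, so the identity is trivial. For general $n$, the operators $S_{w}$ are bounded, since each $S_{i}$ is a coordinate of the bounded map $V$ and so $\left\Vert S_{i}\right\Vert \le1$; the adjoints $S^{*}_{w}$ therefore make sense.

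To get \prettyref{eq:4-8}, apply polarization, or simply observe that
\[
\left\langle \Big(\sum_{w\in W_{n}}S^{*}_{w}S_{w}\Big)f,f\right\rangle =\sum_{w\in W_{n}}\left\Vert S_{w}f\right\Vert ^{2}=\left\langle f,f\right\rangle
\]
for all $f$. A bounded self-adjoint operator $T=\sum_{w}S^{*}_{w}S_{w}-I$ with $\left\langle Tf,f\right\rangle =0$ for all $f$ vanishes, because on a complex Hilbert space the quadratic form determines the operator. Equivalently, $\sum_{w}S^{*}_{w}S_{w}=V^{*}_{n}V_{n}=I$, since $V^{*}_{n}\left(g_{w}\right)_{w}=\sum_{w}S^{*}_{w}g_{w}$. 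Conversely, \prettyref{eq:4-8} implies \prettyref{eq:4-7} by taking $\left\langle \cdot f,f\right\rangle $.

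There is no real obstacle here. The only point needing care is that \prettyref{eq:4-6} has already been extended from kernel sections to all of $\mathcal{H}_{\infty}$ in \prettyref{lem:4-5}, so no further density argument is needed.
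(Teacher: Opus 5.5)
Your proof is correct and essentially the paper's own argument. You get the norm identity by combining $V_n f=(S_w f)_{w\in W_n}$ from Lemma~\ref{lem:4-5} with the isometry property of $V_n$ from Lemma~\ref{lem:4-2}, then the operator identity because the self-adjoint operator $I-\sum_{w}S_w^*S_w$ has vanishing quadratic form. Your extra observation that $\sum_{w}S^*_wS_w=V_n^*V_n$ is an equally valid direct way to see the operator identity.
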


\begin{proof}
Fix $n\ge0$. Using \prettyref{eq:4-6} and the definition of the
direct sum norm, 
\[
\left\Vert V_{n}f\right\Vert ^{2}=\sum_{w\in W_{n}}\left\Vert S_{w}f\right\Vert ^{2}\qquad\left(f\in\mathcal{H}_{\infty}\right).
\]
Since $V_{n}$ is an isometry by \prettyref{lem:4-2}, we have $\left\Vert V_{n}f\right\Vert =\left\Vert f\right\Vert $,
which gives \prettyref{eq:4-7}.

For \prettyref{eq:4-8}, set 
\[
A_{n}:=\sum_{w\in W_{n}}S^{*}_{w}S_{w}.
\]
Then $A_{n}$ is a positive operator and \prettyref{eq:4-7} implies
\[
\left\langle A_{n}f,f\right\rangle =\sum_{w\in W_{n}}\left\Vert S_{w}f\right\Vert ^{2}=\left\Vert f\right\Vert ^{2}=\left\langle f,f\right\rangle 
\]
for all $f\in\mathcal{H}_{\infty}$. Therefore 
\[
\left\langle \left(I_{\mathcal{H}_{\infty}}-A_{n}\right)f,f\right\rangle =0
\]
for all $f$. Since $I_{\mathcal{H}_{\infty}}-A_{n}$ is self-adjoint,
this forces $I_{\mathcal{H}_{\infty}}-A_{n}=0$, hence $A_{n}=I_{\mathcal{H}_{\infty}}$. 
\end{proof}

\begin{rem}
The splitting isometry $V:\mathcal{H}_{\infty}\to\mathcal{H}^{\oplus m}_{\infty}$
from \prettyref{lem:4-2} determines a tuple $\left(S_{1},\dots,S_{m}\right)$
on $\mathcal{H}_{\infty}$ satisfying $\sum^{m}_{i=1}S^{*}_{i}S_{i}=I_{\mathcal{H}_{\infty}}$.
This induces a unital completely positive map on $B\left(\mathcal{H}_{\infty}\right)$
given by $\Psi\left(X\right):=\sum^{m}_{i=1}S^{*}_{i}XS_{i}$. It
is natural to ask how the defect space $\mathcal{H}^{\oplus m}_{\infty}\ominus V\mathcal{H}_{\infty}$
and related decompositions of $V$ are reflected in the tree package
from \prettyref{sec:2}, and in particular how these operator-theoretic
features interact with the effective resistance and capacity of the
canonical network at $s$. We do not consider these questions here. 
\end{rem}

\section{Boundary martingales}\label{sec:5}

In \cite{tian2026sub} we used the harmonic majorant and its Doob
transforms to build boundary measures and organize the normalized
defect expansion. Here the emphasis is different: we first work on
a single boundary space $\Omega$ and study the martingale $\left(M_{n}\left(s,t;\cdot\right)\right)$
obtained by sampling $K_{\infty}$ along random words; its limit yields
diagonal boundary factors $h(\cdot;\omega)$ that dominate off-diagonal
values and supply the diagonal control needed for weighting. Sections
\ref{sec:6}--\ref{sec:7} then use this to build a cone of weighted
boundary kernels $J_{f}$ and to recognize $L$-invariance through
the shift cocycle. These boundary constructions are organized around
the tree energy input and the resulting diagonal control.

Let $\Omega:=\left\{ 1,\dots,m\right\} ^{\mathbb{N}}$ with its product
$\sigma$-algebra, and let $\mu$ be the product probability measure
for which the coordinate maps are independent and uniformly distributed
on $\left\{ 1,\dots,m\right\} $. For $\omega=\omega_{1}\omega_{2}\cdots\in\Omega$
and $n\ge0$ we write 
\[
\omega|n:=\omega_{1}\cdots\omega_{n}\in W_{n},
\]
with the convention $\omega|0=\emptyset$. Let $\mathcal{F}_{n}$
denote the $\sigma$-algebra generated by the first $n$ coordinates,
so that $\left(\mathcal{F}_{n}\right)_{n\ge0}$ is the natural filtration
on $\Omega$.

Fix $s,t\in X_{\mathrm{fin}}$. For $n\ge0$ define the random variable
\begin{equation}
M_{n}\left(s,t;\omega\right):=m^{n}K_{\infty}\left(\varphi_{\omega\mid n}\left(s\right),\varphi_{\omega\mid n}\left(t\right)\right).\label{eq:5-1}
\end{equation}
Since $K_{\infty}$ is positive definite on $X_{\mathrm{fin}}$ and
$\varphi_{\omega\mid n}\left(s\right),\varphi_{\omega\mid n}\left(t\right)\in X_{\mathrm{fin}}$,
the quantity $M_{n}\left(s,t;\omega\right)$ is well-defined and finite
for every $\omega$.
\begin{lem}
\label{lem:5-1}For each fixed $s,t\in X_{\mathrm{fin}}$, the process
$\left(M_{n}\left(s,t;\cdot\right)\right)_{n\ge0}$ is a complex-valued
$\left(\mathcal{F}_{n}\right)$-martingale. Moreover, 
\begin{equation}
\mathbb{E}\left[M_{n}\left(s,t;\cdot\right)\right]=K_{\infty}\left(s,t\right)\qquad\left(n\ge0\right).\label{eq:5-2}
\end{equation}
\end{lem}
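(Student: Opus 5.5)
The proof is a direct computation of conditional expectations, using only the one-step invariance $LK_{\infty}=K_{\infty}$ on $X_{\mathrm{fin}}$ from \prettyref{thm:3-4} and the fact, established in the proof of \prettyref{thm:3-4}, that each $\varphi_{i}$ maps $X_{\mathrm{fin}}$ into itself.

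\textbf{Adaptedness and integrability.} By induction on $n$, $\varphi_{v}(s),\varphi_{v}(t)\in X_{\mathrm{fin}}$ for every word $v$. The random variable $M_{n}(s,t;\omega)$ depends only on $\omega|n$, so it is $\mathcal{F}_{n}$-measurable and takes finitely many values. It is therefore bounded and in particular integrable. One can even bound it explicitly via \prettyref{lem:3-1} by $m^{n}u_{\infty}(\varphi_{\omega|n}(s))^{1/2}u_{\infty}(\varphi_{\omega|n}(t))^{1/2}$, but finiteness of the value set is all that is needed.

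\textbf{Martingale property.} Fix $n$ and a cylinder determined by $\omega|n=w$. Under $\mu$ the next coordinate $\omega_{n+1}$ is independent of $\mathcal{F}_{n}$ and uniform on $\{1,\dots,m\}$. Since $\varphi_{wi}=\varphi_{i}\circ\varphi_{w}$, on this cylinder we get
\[
\mathbb{E}\left[M_{n+1}\mid\mathcal{F}_{n}\right]=\frac{1}{m}\sum_{i=1}^{m}m^{n+1}K_{\infty}\left(\varphi_{i}(x),\varphi_{i}(y)\right)=m^{n}\left(LK_{\infty}\right)(x,y),
\]
where $x=\varphi_{w}(s)$ and $y=\varphi_{w}(t)$. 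Because $x,y\in X_{\mathrm{fin}}$, invariance $LK_{\infty}=K_{\infty}$ applies and the right side equals $m^{n}K_{\infty}(x,y)=M_{n}$. The real and imaginary parts are handled simultaneously by linearity of conditional expectation, so complex values cause no difficulty.

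\textbf{Expectation identity \prettyref{eq:5-2}.} This follows from the martingale property by iterating, since $M_{0}=K_{\infty}(s,t)$ is deterministic. Alternatively, one can compute directly:
\[
\mathbb{E}[M_{n}]=\sum_{w\in W_{n}}m^{-n}\,m^{n}K_{\infty}\left(\varphi_{w}(s),\varphi_{w}(t)\right)=K_{\infty}(s,t)
\]
by \prettyref{lem:4-1}.

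\textbf{Main obstacle.} There is no real obstacle. The only points needing care are, first, that the invariance is applied only at points of $X_{\mathrm{fin}}$, which is guaranteed by the forward-invariance of $X_{\mathrm{fin}}$; and second, keeping the composition order $\varphi_{wi}=\varphi_{i}\circ\varphi_{w}$ consistent with the convention $\varphi_{w}=\varphi_{i_{n}}\circ\cdots\circ\varphi_{i_{1}}$.
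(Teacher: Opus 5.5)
Your proposal is correct and follows essentially the same route as the paper. Both condition on the cylinder $\{\omega|n=w\}$ and average over the uniform next letter to get $m^{n}(LK_{\infty})(\varphi_{w}(s),\varphi_{w}(t))$, then apply $LK_{\infty}=K_{\infty}$ at points of $X_{\mathrm{fin}}$, and finally compute the mean by expanding over $W_{n}$ with $L^{n}K_{\infty}=K_{\infty}$; your explicit treatment of adaptedness, integrability, and forward invariance of $X_{\mathrm{fin}}$ supplies details the paper leaves implicit.
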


\begin{proof}
Fix $n\ge0$ and condition on the cylinder event $\left\{ \omega|n=w\right\} $.
Using the uniform distribution of $\omega_{n+1}$ and the definition
of $L$, 
\begin{align*}
\mathbb{E}\left[M_{n+1}\left(s,t;\cdot\right)\mid\omega|n=w\right] & =\frac{1}{m}\sum^{m}_{i=1}m^{n+1}K_{\infty}\left(\varphi_{wi}\left(s\right),\varphi_{wi}\left(t\right)\right)\\
 & =m^{n}\sum^{m}_{i=1}K_{\infty}\left(\varphi_{i}\left(\varphi_{w}\left(s\right)\right),\varphi_{i}\left(\varphi_{w}\left(t\right)\right)\right).
\end{align*}
By \prettyref{thm:3-4} we have $LK_{\infty}=K_{\infty}$ on $X_{\mathrm{fin}}\times X_{\mathrm{fin}}$,
hence 
\[
\sum^{m}_{i=1}K_{\infty}\left(\varphi_{i}\left(\varphi_{w}\left(s\right)\right),\varphi_{i}\left(\varphi_{w}\left(t\right)\right)\right)=K_{\infty}\left(\varphi_{w}\left(s\right),\varphi_{w}\left(t\right)\right).
\]
Therefore, on $\left\{ \omega|n=w\right\} $, we have
\[
\mathbb{E}\left[M_{n+1}\left(s,t;\cdot\right)\mid\omega|n=w\right]=m^{n}K_{\infty}\left(\varphi_{w}\left(s\right),\varphi_{w}\left(t\right)\right)=M_{n}\left(s,t;\omega\right).
\]
This shows $\mathbb{E}\left[M_{n+1}\left(s,t;\cdot\right)\,\middle|\,\mathcal{F}_{n}\right]=M_{n}\left(s,t;\cdot\right)$,
hence $\left(M_{n}\right)$ is a martingale.

To compute the mean, we expand the expectation over words of length
$n$, so that
\begin{align*}
\mathbb{E}\left[M_{n}\left(s,t;\cdot\right)\right] & =\sum_{w\in W_{n}}\mu\left(\omega|n=w\right)m^{n}K_{\infty}\left(\varphi_{w}\left(s\right),\varphi_{w}\left(t\right)\right)\\
 & =\sum_{w\in W_{n}}K_{\infty}\left(\varphi_{w}\left(s\right),\varphi_{w}\left(t\right)\right).
\end{align*}
Since $L^{n}K_{\infty}=K_{\infty}$, we have 
\[
\sum_{w\in W_{n}}K_{\infty}\left(\varphi_{w}\left(s\right),\varphi_{w}\left(t\right)\right)=\left(L^{n}K_{\infty}\right)\left(s,t\right)=K_{\infty}\left(s,t\right),
\]
which proves \prettyref{eq:5-2}. 
\end{proof}

The next step is to impose a condition guaranteeing uniform $L^{2}$
control. The relevant quantity is a levelwise square sum of the diagonal
values of $K_{\infty}$ along the $\varphi$-tree.

For $s\in X_{\mathrm{fin}}$ define 
\begin{equation}
B\left(s\right):=\sup_{n\ge0}\left\{ m^{n}\sum\nolimits_{w\in W_{n}}u_{\infty}\left(\varphi_{w}\left(s\right)\right)^{2}\right\} \in\left[0,\infty\right].\label{eq:5-3}
\end{equation}
We set 
\[
X_{2}:=\left\{ s\in X_{\mathrm{fin}}:B\left(s\right)<\infty\right\} .
\]
This condition holds under a convenient level regularity hypothesis
on the diagonal values $u_{\infty}\left(\varphi_{w}\left(s\right)\right)$.
We refer to \prettyref{eq:5-4} below as \emph{level comparability}
of the diagonal values along the $\varphi$-tree rooted at $s$. 
\begin{lem}[$X_{2}$ stability]
\label{lem:5-2-1} Fix $s\in X_{2}$ and $k\in\left\{ 1,\dots,m\right\} $.
Then $\varphi_{k}\left(s\right)\in X_{2}$ and 
\begin{equation}
B\left(\varphi_{k}\left(s\right)\right)\le\frac{1}{m}B\left(s\right).\label{eq:B-forward}
\end{equation}
\end{lem}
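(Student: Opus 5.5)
The plan is to compare the level-$n$ square sum at $\varphi_{k}\left(s\right)$ with the level-$(n+1)$ square sum at $s$. The key observation is that the words beginning with the letter $k$ index exactly the points reached from $\varphi_{k}\left(s\right)$.

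\emph{Membership in $X_{\mathrm{fin}}$.} In the proof of \prettyref{thm:3-4} we showed $u_{\infty}\left(\varphi_{k}\left(s\right)\right)\le u_{\infty}\left(s\right)<\infty$, so $\varphi_{k}\left(s\right)\in X_{\mathrm{fin}}$ and $B\left(\varphi_{k}\left(s\right)\right)$ is defined.

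\emph{Composition.} With the convention $\varphi_{w}=\varphi_{i_{n}}\circ\cdots\circ\varphi_{i_{1}}$ we have $\varphi_{w}\left(\varphi_{k}\left(s\right)\right)=\varphi_{kw}\left(s\right)$ for every $w\in W_{n}$, exactly as in the proof of \prettyref{lem:4-4}.

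\emph{Level comparison.} Fix $n\ge0$. The map $w\mapsto kw$ injects $W_{n}$ into $W_{n+1}$, and every term $u_{\infty}\left(\varphi_{v}\left(s\right)\right)^{2}$ is nonnegative. Hence
\[
m^{n}\sum_{w\in W_{n}}u_{\infty}\left(\varphi_{w}\left(\varphi_{k}\left(s\right)\right)\right)^{2}=m^{n}\sum_{w\in W_{n}}u_{\infty}\left(\varphi_{kw}\left(s\right)\right)^{2}\le\frac{1}{m}\,m^{n+1}\sum_{v\in W_{n+1}}u_{\infty}\left(\varphi_{v}\left(s\right)\right)^{2}.
\]
The last quantity is at most $\frac{1}{m}B\left(s\right)$ by the definition \prettyref{eq:5-3}.

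\emph{Conclusion.} Taking the supremum over $n\ge0$ gives \prettyref{eq:B-forward}. Since $B\left(s\right)<\infty$, it follows that $B\left(\varphi_{k}\left(s\right)\right)<\infty$, so $\varphi_{k}\left(s\right)\in X_{2}$.

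No step is a genuine obstacle. The only points needing care are the word-ordering convention, so that the prefix $k$ and not a suffix is the correct reindexing, and the bookkeeping of the extra factor of $m$ from the shift of level.
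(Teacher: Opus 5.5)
Your proposal is correct and follows essentially the same argument as the paper: reindex via $w\mapsto kw$, which injects $W_{n}$ into $W_{n+1}$, bound by the level-$(n+1)$ square sum at $s$, and take the supremum over $n$. Your explicit check that $\varphi_{k}\left(s\right)\in X_{\mathrm{fin}}$ (so that $B\left(\varphi_{k}\left(s\right)\right)$ is defined) is a small detail the paper leaves implicit from the proof of \prettyref{thm:3-4}, and including it is worthwhile.
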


\begin{proof}
Fix $k\in\left\{ 1,\dots,m\right\} $ and $n\ge0$. By definition
\prettyref{eq:5-3}, 
\[
m^{n}\sum_{w\in W_{n}}u_{\infty}\left(\varphi_{w}\left(\varphi_{k}\left(s\right)\right)\right)^{2}=m^{n}\sum_{w\in W_{n}}u_{\infty}\left(\varphi_{kw}\left(s\right)\right)^{2}.
\]
The concatenation map $w\mapsto kw$ identifies $W_{n}$ with a subset
of $W_{n+1}$, hence 
\begin{align*}
m^{n}\sum_{w\in W_{n}}u_{\infty}\left(\varphi_{kw}\left(s\right)\right)^{2} & \le m^{n}\sum_{v\in W_{n+1}}u_{\infty}\left(\varphi_{v}\left(s\right)\right)^{2}\\
 & =\frac{1}{m}\left(m^{n+1}\sum_{v\in W_{n+1}}u_{\infty}\left(\varphi_{v}\left(s\right)\right)^{2}\right)\le\frac{1}{m}B\left(s\right).
\end{align*}
Taking the supremum over $n\ge0$ yields \prettyref{eq:B-forward}.
Since $s\in X_{2}$ implies $B\left(s\right)<\infty$, we conclude
$B\left(\varphi_{k}\left(s\right)\right)<\infty$, hence $\varphi_{k}\left(s\right)\in X_{2}$. 
\end{proof}

\begin{lem}
\label{lem:5-2}Fix $s\in X_{\mathrm{fin}}$ and suppose there exists
$\Gamma\ge1$ such that for every $n\ge0$, 
\begin{equation}
\max_{w\in W_{n}}u_{\infty}\left(\varphi_{w}\left(s\right)\right)\le\Gamma\min_{w\in W_{n}}u_{\infty}\left(\varphi_{w}\left(s\right)\right).\label{eq:5-4}
\end{equation}
Then $s\in X_{2}$ and 
\begin{equation}
B\left(s\right)\le\Gamma\,u_{\infty}\left(s\right)^{2}.\label{eq:B-bound}
\end{equation}
\end{lem}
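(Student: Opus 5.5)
The bound rests on the level sum identity \prettyref{eq:4-5}: for $s\in X_{\mathrm{fin}}$ and every $n\ge0$,
\[
\sum_{w\in W_{n}}u_{\infty}\left(\varphi_{w}\left(s\right)\right)=u_{\infty}\left(s\right).
\]
This holds because $K_{\infty}\left(x,x\right)=u_{\infty}\left(x\right)$ on $X_{\mathrm{fin}}$, and $\varphi_{w}\left(s\right)\in X_{\mathrm{fin}}$ (as shown in the proof of \prettyref{thm:3-4}). I will combine this linear identity with the comparability hypothesis \prettyref{eq:5-4} to control the quadratic sum appearing in \prettyref{eq:5-3}.

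Fix $n$ and write $x_{w}:=u_{\infty}\left(\varphi_{w}\left(s\right)\right)\ge0$ for $w\in W_{n}$. Set $M_{n}:=\max_{w}x_{w}$ and $\mu_{n}:=\min_{w}x_{w}$. Since $\left|W_{n}\right|=m^{n}$, the level sum identity gives $m^{n}\mu_{n}\le u_{\infty}\left(s\right)$, so $\mu_{n}\le m^{-n}u_{\infty}\left(s\right)$. Then \prettyref{eq:5-4} yields
\[
M_{n}\le\Gamma\mu_{n}\le\Gamma m^{-n}u_{\infty}\left(s\right).
\]
Next I bound the quadratic sum by pulling out one factor of the maximum:
\[
m^{n}\sum_{w\in W_{n}}x_{w}^{2}\le m^{n}M_{n}\sum_{w\in W_{n}}x_{w}=m^{n}M_{n}\,u_{\infty}\left(s\right)\le\Gamma\,u_{\infty}\left(s\right)^{2}.
\]
This bound is uniform in $n$. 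Taking the supremum over $n\ge0$ gives \prettyref{eq:B-bound}. Since $u_{\infty}\left(s\right)<\infty$, it follows that $B\left(s\right)<\infty$, so $s\in X_{2}$.

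There is no real obstacle. The only point needing care is choosing the right pairing. Bounding $\sum x_{w}^{2}$ by $m^{n}M_{n}^{2}$ would give the weaker constant $\Gamma^{2}$. Using $M_{n}\sum x_{w}$ instead and controlling the single factor $M_{n}$ through the minimum is what yields the stated constant $\Gamma$. The degenerate case $u_{\infty}\left(s\right)=0$ needs nothing extra: it forces all $x_{w}=0$, and the bound holds trivially.
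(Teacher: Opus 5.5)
Your proof is correct and follows the paper's argument essentially step for step. It uses the level identity $\sum_{w\in W_n}u_\infty(\varphi_w(s))=u_\infty(s)$ coming from $L^nK_\infty=K_\infty$, bounds $\max_{w}u_\infty(\varphi_w(s))\le\Gamma m^{-n}u_\infty(s)$ via the minimum, and then pulls out a single factor of the maximum to obtain $m^n\sum_{w}u_\infty(\varphi_w(s))^2\le\Gamma u_\infty(s)^2$ uniformly in $n$ (your names $M_n$ and $\mu_n$ clash with the paper's martingale $M_n$ and measure $\mu$, but this is only cosmetic).
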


\begin{proof}
Fix $n\ge0$ and write 
\[
u^{\max}_{n}\left(s\right):=\max_{w\in W_{n}}u_{\infty}\left(\varphi_{w}\left(s\right)\right),\qquad u^{\min}_{n}\left(s\right):=\min_{w\in W_{n}}u_{\infty}\left(\varphi_{w}\left(s\right)\right).
\]
We have $LK_{\infty}=K_{\infty}$, hence on the diagonal $u_{\infty}\left(x\right)=\sum^{m}_{i=1}u_{\infty}\left(\varphi_{i}\left(x\right)\right)$.
Iterating yields 
\begin{equation}
u_{\infty}\left(s\right)=\sum_{w\in W_{n}}u_{\infty}\left(\varphi_{w}\left(s\right)\right).\label{eq:5-6}
\end{equation}
Since there are $m^{n}$ terms in \prettyref{eq:5-6}, we have 
\[
m^{n}u^{\min}_{n}\left(s\right)\le u_{\infty}\left(s\right)\le m^{n}u^{\max}_{n}\left(s\right).
\]
Assumption \prettyref{eq:5-4} gives $u^{\max}_{n}\left(s\right)\le\Gamma u^{\min}_{n}\left(s\right)$,
hence 
\[
u^{\max}_{n}\left(s\right)\le\Gamma\frac{u_{\infty}\left(s\right)}{m^{n}}.
\]
Therefore, 
\begin{align*}
m^{n}\sum_{w\in W_{n}}u_{\infty}\left(\varphi_{w}\left(s\right)\right)^{2} & \le m^{n}\left(\max_{w\in W_{n}}u_{\infty}\left(\varphi_{w}\left(s\right)\right)\right)\sum_{w\in W_{n}}u_{\infty}\left(\varphi_{w}\left(s\right)\right)\\
 & \le m^{n}u^{\max}_{n}\left(s\right)u_{\infty}\left(s\right)\le\Gamma u_{\infty}\left(s\right)^{2}.
\end{align*}
Taking the supremum over $n$ yields \prettyref{eq:B-bound}. 
\end{proof}

\begin{rem}
The hypothesis \prettyref{eq:5-4} is a simple sufficient condition
ensuring $B\left(s\right)<\infty$. It is stronger than necessary.
Indeed, to conclude $s\in X_{2}$ one only needs the uniform bound
\[
\sup_{n\ge0}m^{n}\sum_{w\in W_{n}}u_{\infty}\left(\varphi_{w}\left(s\right)\right)^{2}<\infty,
\]
i.e., $B\left(s\right)<\infty$ itself. One may replace \prettyref{eq:5-4}
by any levelwise condition that yields a bound of the form 
\[
m^{n}\sum_{w\in W_{n}}u_{\infty}\left(\varphi_{w}\left(s\right)\right)^{2}\le C\,u_{\infty}\left(s\right)^{2}\qquad\left(n\ge0\right),
\]
for some constant $C\ge1$. 
\end{rem}

We now prove that $B\left(s\right)$ controls the $L^{2}$ size of
the martingale $\left(M_{n}\left(s,t;\cdot\right)\right)$.
\begin{lem}
\label{lem:5-3}Fix $s,t\in X_{2}$. Then for every $n\ge0$, 
\begin{equation}
\mathbb{E}\left[\left|M_{n}\left(s,t;\cdot\right)\right|^{2}\right]\le B\left(s\right)^{1/2}B\left(t\right)^{1/2}.\label{eq:5-7}
\end{equation}
In particular, $\left(M_{n}\left(s,t;\cdot\right)\right)_{n\ge0}$
is bounded in $L^{2}\left(\Omega,\mu\right)$. 
\end{lem}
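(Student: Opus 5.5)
The plan is to compute the second moment exactly by expanding over the cylinder sets of level $n$, and then apply pointwise positivity followed by Cauchy--Schwarz. Since $M_{n}\left(s,t;\cdot\right)$ is $\mathcal{F}_{n}$-measurable and constant on each cylinder $\left\{ \omega|n=w\right\}$, which has $\mu$-measure $m^{-n}$, we obtain
\[
\mathbb{E}\left[\left|M_{n}\left(s,t;\cdot\right)\right|^{2}\right]=\sum_{w\in W_{n}}m^{-n}\,m^{2n}\left|K_{\infty}\left(\varphi_{w}\left(s\right),\varphi_{w}\left(t\right)\right)\right|^{2}=m^{n}\sum_{w\in W_{n}}\left|K_{\infty}\left(\varphi_{w}\left(s\right),\varphi_{w}\left(t\right)\right)\right|^{2}.
\]
All the points $\varphi_{w}\left(s\right),\varphi_{w}\left(t\right)$ lie in $X_{\mathrm{fin}}$, as observed in the proof of \prettyref{thm:3-4}. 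Hence every term is finite.

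Next, we bound each term using \prettyref{lem:3-1} applied to the positive definite kernel $K_{\infty}$ on $X_{\mathrm{fin}}$, which is available by \prettyref{thm:3-4}. Since $K_{\infty}\left(x,x\right)=u_{\infty}\left(x\right)$, this gives
\[
\left|K_{\infty}\left(\varphi_{w}\left(s\right),\varphi_{w}\left(t\right)\right)\right|^{2}\le u_{\infty}\left(\varphi_{w}\left(s\right)\right)u_{\infty}\left(\varphi_{w}\left(t\right)\right).
\]
We then apply Cauchy--Schwarz to the sum over $w\in W_{n}$:
\[
m^{n}\sum_{w\in W_{n}}u_{\infty}\left(\varphi_{w}\left(s\right)\right)u_{\infty}\left(\varphi_{w}\left(t\right)\right)\le\left(m^{n}\sum_{w\in W_{n}}u_{\infty}\left(\varphi_{w}\left(s\right)\right)^{2}\right)^{1/2}\left(m^{n}\sum_{w\in W_{n}}u_{\infty}\left(\varphi_{w}\left(t\right)\right)^{2}\right)^{1/2}.
\]
By the definition \prettyref{eq:5-3}, each factor is at most $B\left(s\right)^{1/2}$ and $B\left(t\right)^{1/2}$ respectively, which yields \prettyref{eq:5-7}. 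Because the bound does not depend on $n$ and is finite for $s,t\in X_{2}$, the martingale is bounded in $L^{2}\left(\Omega,\mu\right)$.

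There is no real obstacle here. The only point needing care is the bookkeeping of the powers of $m$: one factor $m^{2n}$ comes from squaring $M_{n}$, and the cylinder measure contributes $m^{-n}$. Together these leave exactly the normalization $m^{n}$ that appears in $B$.
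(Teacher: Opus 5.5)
Your proposal is correct and follows the paper's proof essentially step for step. Both arguments expand over level-$n$ cylinders to get $m^{n}\sum_{w\in W_{n}}\left|K_{\infty}\left(\varphi_{w}\left(s\right),\varphi_{w}\left(t\right)\right)\right|^{2}$, apply the determinant bound of \prettyref{lem:3-1} to $K_{\infty}$, use Cauchy--Schwarz over $W_{n}$, and then invoke the definition of $B$.
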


\begin{proof}
Fix $n\ge0$. By expanding the expectation over cylinders of length
$n$, 
\begin{align*}
\mathbb{E}\left[\left|M_{n}\left(s,t;\cdot\right)\right|^{2}\right] & =\sum_{w\in W_{n}}\mu\left(\omega\mid n=w\right)m^{2n}\left|K_{\infty}\left(\varphi_{w}\left(s\right),\varphi_{w}\left(t\right)\right)\right|^{2}\\
 & =m^{n}\sum_{w\in W_{n}}\left|K_{\infty}\left(\varphi_{w}\left(s\right),\varphi_{w}\left(t\right)\right)\right|^{2}.
\end{align*}
Applying \prettyref{lem:3-1} to $K_{\infty}$ gives 
\[
\left|K_{\infty}\left(\varphi_{w}\left(s\right),\varphi_{w}\left(t\right)\right)\right|^{2}\le u_{\infty}\left(\varphi_{w}\left(s\right)\right)u_{\infty}\left(\varphi_{w}\left(t\right)\right).
\]
Hence 
\[
\mathbb{E}\left[\left|M_{n}\left(s,t;\cdot\right)\right|^{2}\right]\le m^{n}\sum_{w\in W_{n}}u_{\infty}\left(\varphi_{w}\left(s\right)\right)u_{\infty}\left(\varphi_{w}\left(t\right)\right).
\]
By Cauchy-Schwarz in $\ell^{2}\left(W_{n}\right)$, 
\[
\sum_{w\in W_{n}}u_{\infty}\left(\varphi_{w}\left(s\right)\right)u_{\infty}\left(\varphi_{w}\left(t\right)\right)\le\left(\sum_{w\in W_{n}}u_{\infty}\left(\varphi_{w}\left(s\right)\right)^{2}\right)^{1/2}\left(\sum_{w\in W_{n}}u_{\infty}\left(\varphi_{w}\left(t\right)\right)^{2}\right)^{1/2}.
\]
Multiplying by $m^{n}$ and using the definition \prettyref{eq:5-3}
yields 
\begin{align*}
\mathbb{E}\left[\left|M_{n}\left(s,t;\cdot\right)\right|^{2}\right] & \le\left(m^{n}\sum_{w\in W_{n}}u_{\infty}\left(\varphi_{w}\left(s\right)\right)^{2}\right)^{1/2}\left(m^{n}\sum_{w\in W_{n}}u_{\infty}\left(\varphi_{w}\left(t\right)\right)^{2}\right)^{1/2}\\
 & \le B\left(s\right)^{1/2}B\left(t\right)^{1/2},
\end{align*}
which is \prettyref{eq:5-7}. 
\end{proof}

We can now pass to the boundary limit.
\begin{thm}[boundary limit]
\label{thm:5-5}Fix $s,t\in X_{2}$. Then the martingale $\left(M_{n}\left(s,t;\cdot\right)\right)_{n\ge0}$
converges in $L^{2}\left(\Omega,\mu\right)$ and $\mu$-almost surely
to a limit $M_{\infty}\left(s,t;\cdot\right)\in L^{2}\left(\Omega,\mu\right)$.
Moreover, 
\begin{equation}
K_{\infty}\left(s,t\right)=\mathbb{E}\left[M_{\infty}\left(s,t;\cdot\right)\right].\label{eq:5-8}
\end{equation}
\end{thm}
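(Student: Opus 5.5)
The plan is to apply the standard $L^{2}$ martingale convergence theorem to the complex-valued martingale of \prettyref{lem:5-1}, with the uniform $L^{2}$ bound supplied by \prettyref{lem:5-3}. Fix $s,t\in X_{2}$. By \prettyref{lem:5-1}, $(M_{n}(s,t;\cdot))_{n\ge0}$ is an $(\mathcal{F}_{n})$-martingale. By \prettyref{lem:5-3}, $\sup_{n}\mathbb{E}[|M_{n}|^{2}]\le B(s)^{1/2}B(t)^{1/2}<\infty$. Since the martingale is complex-valued, I would split it into real and imaginary parts. Each part is a real martingale with respect to the same filtration, because conditional expectation commutes with taking real and imaginary parts. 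Each is also bounded in $L^{2}$, since $|\mathrm{Re}\,M_{n}|,|\mathrm{Im}\,M_{n}|\le|M_{n}|$.

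For each real part, Doob's $L^{2}$ martingale convergence theorem gives convergence $\mu$-a.s. and in $L^{2}(\Omega,\mu)$. Recombining, $M_{n}(s,t;\cdot)\to M_{\infty}(s,t;\cdot)$ almost surely and in $L^{2}$, where $M_{\infty}:=\lim\mathrm{Re}\,M_{n}+i\lim\mathrm{Im}\,M_{n}$, defined off a null set and set to $0$ there. Alternatively, one can argue directly in $L^{2}$. Martingale increments are orthogonal, so $\mathbb{E}|M_{n}|^{2}=\mathbb{E}|M_{0}|^{2}+\sum_{k<n}\mathbb{E}|M_{k+1}-M_{k}|^{2}$. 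This sum is therefore bounded, which makes the sequence $L^{2}$-Cauchy. Almost sure convergence then still follows from Doob's theorem, since $L^{2}$-boundedness implies $L^{1}$-boundedness.

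For \prettyref{eq:5-8}, $L^{2}$ convergence on the probability space $\mu$ implies $L^{1}$ convergence by Cauchy--Schwarz, so $\mathbb{E}[M_{n}]\to\mathbb{E}[M_{\infty}]$. By \prettyref{eq:5-2}, $\mathbb{E}[M_{n}]=K_{\infty}(s,t)$ for every $n$, and hence $\mathbb{E}[M_{\infty}(s,t;\cdot)]=K_{\infty}(s,t)$. As a byproduct, $M_{n}=\mathbb{E}[M_{\infty}\mid\mathcal{F}_{n}]$.

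There is no serious obstacle here, because all the quantitative work was done in \prettyref{lem:5-3}. The only points needing care are the reduction of the complex-valued case to real martingales, and noting that $M_{\infty}$ is determined only up to a $\mu$-null set that may depend on the pair $(s,t)$. That dependence is harmless for this statement, which fixes $s,t$.
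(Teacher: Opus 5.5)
Your proposal is correct and follows the paper's proof. You get the martingale property from Lemma~\ref{lem:5-1} and the uniform $L^{2}$ bound from Lemma~\ref{lem:5-3}, apply the $L^{2}$ martingale convergence theorem for almost sure and $L^{2}$ convergence, and pass to the limit in \eqref{eq:5-2} to obtain \eqref{eq:5-8}; your extra care with the real/imaginary splitting and with $L^{2}$-convergence implying convergence of expectations only fills in details the paper leaves implicit.
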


\begin{proof}
By \prettyref{lem:5-1} the process $\left(M_{n}\left(s,t;\cdot\right)\right)$
is a martingale, and by \prettyref{lem:5-3} it is bounded in $L^{2}$.
The martingale convergence theorem in $L^{2}$ yields the existence
of a limit $M_{\infty}\left(s,t;\cdot\right)\in L^{2}$ such that
$M_{n}\to M_{\infty}$ in $L^{2}$ and $\mu$-almost surely.

Taking expectations in \prettyref{eq:5-2} and passing to the limit
gives 
\[
\mathbb{E}\left[M_{\infty}\left(s,t;\cdot\right)\right]=\lim_{n\to\infty}\mathbb{E}\left[M_{n}\left(s,t;\cdot\right)\right]=K_{\infty}\left(s,t\right),
\]
which is \prettyref{eq:5-8}. 
\end{proof}

The next statement shows that, for each fixed finite configuration
of points, the limiting object produces a positive semidefinite Gram
matrix for almost every boundary point.
\begin{thm}
\label{thm:5-6}Fix an integer $r\ge1$ and points $s_{1},\dots,s_{r}\in X_{2}$.
Then there exists a $\mu$-null set $N\left(s_{1},\dots,s_{r}\right)\subset\Omega$
such that for every $\omega\in\Omega\setminus N\left(s_{1},\dots,s_{r}\right)$
the matrix 
\[
\left[M_{\infty}\left(s_{i},s_{j};\omega\right)\right]^{r}_{i,j=1}
\]
is positive semidefinite. Moreover, for every $i,j\in\left\{ 1,\dots,r\right\} $,
\begin{equation}
K_{\infty}\left(s_{i},s_{j}\right)=\int_{\Omega}M_{\infty}\left(s_{i},s_{j};\omega\right)d\mu\left(\omega\right).\label{eq:5-9}
\end{equation}
\end{thm}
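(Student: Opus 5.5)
The plan is to work with finitely many pairs at once and use almost sure convergence from \prettyref{thm:5-5}. For each ordered pair $(i,j)$, with $i,j\in\left\{ 1,\dots,r\right\}$, the theorem gives a $\mu$-null set $N_{ij}$ off which $M_{n}\left(s_{i},s_{j};\omega\right)\to M_{\infty}\left(s_{i},s_{j};\omega\right)$. Set $N\left(s_{1},\dots,s_{r}\right):=\bigcup_{i,j}N_{ij}$. This is a finite union of null sets, so it is null.

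The key observation is that, for every $n$ and \emph{every} $\omega$, the matrix
\[
\left[M_{n}\left(s_{i},s_{j};\omega\right)\right]^{r}_{i,j=1}=m^{n}\left[K_{\infty}\left(\varphi_{\omega\mid n}\left(s_{i}\right),\varphi_{\omega\mid n}\left(s_{j}\right)\right)\right]^{r}_{i,j=1}
\]
is positive semidefinite. Indeed, the points $x_{i}:=\varphi_{\omega\mid n}\left(s_{i}\right)$ lie in $X_{\mathrm{fin}}$ (and even in $X_{2}$ by \prettyref{lem:5-2-1}, though only $X_{\mathrm{fin}}$ is needed). Moreover $K_{\infty}$ is positive definite on $X_{\mathrm{fin}}$ by \prettyref{thm:3-4}, and multiplying by $m^{n}>0$ preserves positivity. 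Repeated points among the $x_{i}$ cause no difficulty: for $c\in\mathbb{C}^{r}$, the quadratic form $\sum\overline{c_{i}}c_{j}K_{\infty}\left(x_{i},x_{j}\right)$ is still nonnegative, since it equals the form of the merged coefficients.

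Now fix $\omega\notin N\left(s_{1},\dots,s_{r}\right)$ and $c\in\mathbb{C}^{r}$. The quadratic form $\sum_{i,j}\overline{c_{i}}c_{j}M_{n}\left(s_{i},s_{j};\omega\right)$ is nonnegative for each $n$, and it converges to $\sum_{i,j}\overline{c_{i}}c_{j}M_{\infty}\left(s_{i},s_{j};\omega\right)$ because this is a finite sum of convergent terms. The limit is therefore nonnegative. Since $c$ was arbitrary, the limiting matrix is positive semidefinite. The same argument gives Hermitian symmetry, because $M_{n}\left(s_{j},s_{i}\right)=\overline{M_{n}\left(s_{i},s_{j}\right)}$ passes to the a.s. limit. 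The null set does not depend on $c$, since it was fixed in advance from countably (here finitely) many pairs.

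Finally, \prettyref{eq:5-9} is exactly \prettyref{eq:5-8} from \prettyref{thm:5-5} applied to each pair $\left(s_{i},s_{j}\right)$. This uses only $L^{2}\subset L^{1}$ on the probability space $\Omega$, so the integral is defined. There is no real obstacle. The only point needing care is to choose the exceptional set independently of the test vector $c$, which the finite union accomplishes. It is also worth noting that the resulting set depends on the configuration, so we do not claim a single null set working for all of $X_{2}$.
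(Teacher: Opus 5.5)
Your proposal is correct and follows essentially the same route as the paper: you take the finite union of the almost-sure-convergence null sets from \prettyref{thm:5-5}, observe that each $\left[M_{n}\left(s_{i},s_{j};\omega\right)\right]$ is a positive multiple of a Gram matrix of $K_{\infty}$, and pass to the limit via closedness of the positive semidefinite cone (your quadratic-form argument). You then read off \prettyref{eq:5-9} directly from \prettyref{eq:5-8}, exactly as the paper does.
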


\begin{proof}
For each pair $\left(i,j\right)$, \prettyref{thm:5-5} gives a $\mu$-null
set $N_{i,j}\subset\Omega$ outside of which $M_{n}\left(s_{i},s_{j};\omega\right)\to M_{\infty}\left(s_{i},s_{j};\omega\right)$.
Set $N\left(s_{1},\dots,s_{r}\right):=\bigcup^{r}_{i,j=1}N_{i,j}$.

Fix $n\ge0$ and $\omega\in\Omega$. The matrix 
\[
G_{n}\left(\omega\right):=\left[M_{n}\left(s_{i},s_{j};\omega\right)\right]^{r}_{i,j=1}=m^{n}\left[K_{\infty}\left(\varphi_{\omega\mid n}\left(s_{i}\right),\varphi_{\omega\mid n}\left(s_{j}\right)\right)\right]^{r}_{i,j=1}
\]
is positive semidefinite, since it is a positive scalar multiple of
a Gram matrix of the positive definite kernel $K_{\infty}$. If $\omega\notin N\left(s_{1},\dots,s_{r}\right)$,
then each entry converges, hence $G_{n}\left(\omega\right)$ converges
entrywise to 
\[
G_{\infty}\left(\omega\right):=\left[M_{\infty}\left(s_{i},s_{j};\omega\right)\right]^{r}_{i,j=1}.
\]
Since the cone of positive semidefinite matrices is closed, $G_{\infty}\left(\omega\right)$
is positive semidefinite for every $\omega\notin N\left(s_{1},\dots,s_{r}\right)$.
This proves the first claim.

Finally, \prettyref{eq:5-9} is just \prettyref{eq:5-8} applied to
the pairs $\left(s_{i},s_{j}\right)$. 
\end{proof}

\begin{rem}[diagonal $L^{2}$ control]
\label{rem:diag-L2} The condition $B\left(s\right)<\infty$ is stated
purely in terms of the diagonal function $u_{\infty}$, but the conclusion
of \prettyref{thm:5-6} is off-diagonal: for $\mu$-almost every $\omega$
and for each fixed finite set of points in $X_{2}$ we obtain a positive
semidefinite Gram matrix whose average recovers the corresponding
Gram matrix of $K_{\infty}$. 

For $s\in X_{\mathrm{fin}}$ and $n\ge0$ we have $M_{n}\left(s,s;\omega\right)=m^{n}u_{\infty}\left(\varphi_{\omega\mid n}\left(s\right)\right)\ge0$.
Moreover, 
\[
\mathbb{E}\left[\left|M_{n}\left(s,s;\cdot\right)\right|^{2}\right]=m^{n}\sum_{w\in W_{n}}u_{\infty}\left(\varphi_{w}\left(s\right)\right)^{2}.
\]
Consequently, 
\[
B\left(s\right)=\sup_{n\ge0}\mathbb{E}\left[\left|M_{n}\left(s,s;\cdot\right)\right|^{2}\right],
\]
so $X_{2}$ is exactly the set of points for which the \emph{diagonal}
martingale $\left(M_{n}\left(s,s;\cdot\right)\right)_{n\ge0}$ is
bounded in $L^{2}\left(\Omega,\mu\right)$. The hypothesis \prettyref{eq:5-4}
is one convenient sufficient condition ensuring this boundedness. 
\end{rem}

The condition $B\left(s\right)<\infty$ was introduced to guarantee
an $L^{2}$ boundary limit for the sampled kernels. It is often useful
to refine this boundedness into a quantitative statement measuring
how much of $K_{\infty}\left(s,t\right)$ is realized by the boundary
limit in a genuinely random way. Since $\left(M_{n}\left(s,t;\cdot\right)\right)$
is an $L^{2}$-martingale, its successive differences form an orthogonal
packet in $L^{2}\left(\Omega,\mu\right)$, and the total $L^{2}$-energy
of these increments is exactly the $L^{2}$-variance of the boundary
kernel $M_{\infty}\left(s,t;\cdot\right)$ around its mean $K_{\infty}\left(s,t\right)$.
We state this square-function identity below. 
\begin{prop}[square-function bound]
\label{prop:5-9} Fix $s,t\in X_{2}$ and set 
\[
D_{n}\left(s,t;\omega\right):=M_{n+1}\left(s,t;\omega\right)-M_{n}\left(s,t;\omega\right),\qquad n\ge0.
\]
Then 
\begin{multline}
\sum^{\infty}_{n=0}\mathbb{E}\left[\left|D_{n}\left(s,t;\cdot\right)\right|^{2}\right]\\
=\mathbb{E}\left[\left|M_{\infty}\left(s,t;\cdot\right)\right|^{2}\right]-\left|K_{\infty}\left(s,t\right)\right|^{2}\le B\left(s\right)^{1/2}B\left(t\right)^{1/2}-\left|K_{\infty}\left(s,t\right)\right|^{2}.\label{eq:5-11}
\end{multline}
In particular, the martingale square function 
\[
\left(\sum^{\infty}_{n=0}\left|D_{n}\left(s,t;\cdot\right)\right|^{2}\right)^{1/2}
\]
is in $L^{2}\left(\Omega,\mu\right)$ and its $L^{2}$-norm is bounded
by 
\[
\left(B\left(s\right)^{1/2}B\left(t\right)^{1/2}-\left|K_{\infty}\left(s,t\right)\right|^{2}\right)^{1/2}.
\]
\end{prop}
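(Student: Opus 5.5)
The plan is to use the orthogonality of martingale increments in $L^{2}(\Omega,\mu)$, which gives a Pythagorean identity. Fix $s,t\in X_{2}$. By \prettyref{lem:5-1} the process $(M_{n}(s,t;\cdot))$ is a complex-valued martingale. By \prettyref{lem:5-3} each $M_{n}$ lies in $L^{2}$, so $D_{n}\in L^{2}$. Since $M_{0}(s,t;\omega)=K_{\infty}(s,t)$ is constant, we can write
\[
M_{N}=K_{\infty}\left(s,t\right)+\sum^{N-1}_{n=0}D_{n}.
\]
The first step is to check that the family $\{K_{\infty}(s,t),D_{0},D_{1},\dots\}$ is pairwise orthogonal in $L^{2}$. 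For $j<n$, the function $D_{j}$ is $\mathcal{F}_{n}$-measurable, so
\[
\mathbb{E}\bigl[\overline{D_{j}}D_{n}\bigr]=\mathbb{E}\bigl[\overline{D_{j}}\,\mathbb{E}[D_{n}\mid\mathcal{F}_{n}]\bigr]=0,
\]
and $\mathbb{E}[D_{n}]=0$ gives orthogonality to constants. All products are integrable by Cauchy--Schwarz. On the finite cylinder $\sigma$-algebras this is only a finite-sum computation, so no integrability subtleties arise.

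The second step is the Pythagorean identity at finite level:
\[
\mathbb{E}\left[|M_{N}|^{2}\right]=|K_{\infty}(s,t)|^{2}+\sum^{N-1}_{n=0}\mathbb{E}\left[|D_{n}|^{2}\right].
\]
Next I let $N\to\infty$. The left side converges to $\mathbb{E}[|M_{\infty}|^{2}]$ because $M_{N}\to M_{\infty}$ in $L^{2}$ by \prettyref{thm:5-5}. The right side increases to the full series. This proves the equality in \eqref{eq:5-11}.

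For the inequality in \eqref{eq:5-11}, I use $\mathbb{E}[|M_{\infty}|^{2}]=\lim_{N}\mathbb{E}[|M_{N}|^{2}]\le B(s)^{1/2}B(t)^{1/2}$, which follows from \prettyref{lem:5-3}. For the square-function statement, monotone convergence gives
\[
\mathbb{E}\Bigl[\sum_{n}|D_{n}|^{2}\Bigr]=\sum_{n}\mathbb{E}\bigl[|D_{n}|^{2}\bigr].
\]
So the square function has finite $L^{2}$-norm, equal to the square root of the bound just obtained.

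I expect no genuine obstacle here. The only points needing care are the orthogonality of complex increments (conjugating correctly) and justifying the passage to the limit, which the $L^{2}$ convergence from \prettyref{thm:5-5} handles.
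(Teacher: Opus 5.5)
Your proposal is correct and is essentially the paper's argument: orthogonality of the martingale increments gives the finite-level Pythagorean identity, the $L^{2}$ convergence from Theorem~\ref{thm:5-5} passes it to the limit, and Lemma~\ref{lem:5-3} yields the bound. The only differences are cosmetic: you put the constant $M_{0}=K_{\infty}(s,t)$ into the orthogonal family where the paper expands $\mathbb{E}[|M_{\infty}-K_{\infty}(s,t)|^{2}]$ via the mean identity, and you should say that the square function's $L^{2}$-norm equals $(\mathbb{E}[|M_{\infty}|^{2}]-|K_{\infty}(s,t)|^{2})^{1/2}$ and is bounded by, not equal to, $(B(s)^{1/2}B(t)^{1/2}-|K_{\infty}(s,t)|^{2})^{1/2}$.
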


\begin{proof}
Fix $s,t\in X_{2}$. By \prettyref{thm:5-5}, the process $\left(M_{n}\left(s,t;\cdot\right)\right)_{n\ge0}$
is an $L^{2}$-martingale with limit $M_{\infty}\left(s,t;\cdot\right)$.
The martingale increments $D_{n}:=M_{n+1}-M_{n}$ are pairwise orthogonal
in $L^{2}\left(\Omega,\mu\right)$, hence for every $N\ge1$, 
\[
\mathbb{E}\left[\left|M_{N}\left(s,t;\cdot\right)-M_{0}\left(s,t;\cdot\right)\right|^{2}\right]=\sum^{N-1}_{n=0}\mathbb{E}\left[\left|D_{n}\left(s,t;\cdot\right)\right|^{2}\right].
\]
Since $M_{0}\left(s,t;\omega\right)=K_{\infty}\left(s,t\right)$ is
constant and $M_{N}\to M_{\infty}$ in $L^{2}$ as $N\to\infty$,
we obtain 
\[
\sum^{\infty}_{n=0}\mathbb{E}\left[\left|D_{n}\left(s,t;\cdot\right)\right|^{2}\right]=\mathbb{E}\left[\left|M_{\infty}\left(s,t;\cdot\right)-K_{\infty}\left(s,t\right)\right|^{2}\right].
\]
Expanding the right-hand side and using \prettyref{eq:5-8} gives
\[
\mathbb{E}\left[\left|M_{\infty}\left(s,t;\cdot\right)-K_{\infty}\left(s,t\right)\right|^{2}\right]=\mathbb{E}\left[\left|M_{\infty}\left(s,t;\cdot\right)\right|^{2}\right]-\left|K_{\infty}\left(s,t\right)\right|^{2},
\]
which yields the identity in \prettyref{eq:5-11}.

Finally, by \prettyref{lem:5-3} and convergence in $L^{2}$ we have
\[
\mathbb{E}\left[\left|M_{\infty}\left(s,t;\cdot\right)\right|^{2}\right]\le B\left(s\right)^{1/2}B\left(t\right)^{1/2},
\]
so the right-hand side of \prettyref{eq:5-11} is finite and the stated
$L^{2}$-bound for the square function follows. 
\end{proof}

\section{Shift cocycles and diagonal boundary factors}\label{sec:6}

We now state the pathwise identities satisfied by the boundary limits
constructed in the preceding section. These identities are given by
the word recursion and will be used later to move information along
the tree without returning to purely diagonal arguments.

Let $\sigma:\Omega\to\Omega$ denote the left shift 
\[
\sigma\left(\omega_{1}\omega_{2}\cdots\right):=\omega_{2}\omega_{3}\cdots,
\]
and for each $i\in\left\{ 1,\dots,m\right\} $ let $\iota_{i}:\Omega\to\Omega$
denote the prefix map $\iota_{i}\left(\omega\right):=i\omega$.

We begin with the exact cocycle at finite depth.
\begin{lem}
\label{lem:6-1}Fix $s,t\in X_{\mathrm{fin}}$. For every $n\ge0$,
every $i\in\left\{ 1,\dots,m\right\} $, and every $\omega\in\Omega$,
\begin{equation}
M_{n+1}\left(s,t;\iota_{i}\left(\omega\right)\right)=m\,M_{n}\left(\varphi_{i}\left(s\right),\varphi_{i}\left(t\right);\omega\right).\label{eq:6-1}
\end{equation}
Equivalently, for every $n\ge0$ and every $\omega\in\Omega$, 
\begin{equation}
M_{n+1}\left(s,t;\omega\right)=m\,M_{n}\left(\varphi_{\omega_{1}}\left(s\right),\varphi_{\omega_{1}}\left(t\right);\sigma\left(\omega\right)\right).\label{eq:6-2}
\end{equation}
\end{lem}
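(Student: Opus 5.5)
The identity \prettyref{eq:6-1} is a direct unwinding of the definition \prettyref{eq:5-1} together with the composition convention $\varphi_{w}=\varphi_{i_{n}}\circ\cdots\circ\varphi_{i_{1}}$. No limits and no invariance are needed: the statement is purely combinatorial at finite depth.

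The first step is to compute the prefix of $\iota_{i}(\omega)$. Fix $s,t\in X_{\mathrm{fin}}$, $n\ge0$, $i$, and $\omega$. Since $\iota_{i}(\omega)=i\omega_{1}\omega_{2}\cdots$, we have $\iota_{i}(\omega)|(n+1)=i\,(\omega|n)$, the concatenation of the letter $i$ with the word $\omega|n$. This includes the case $n=0$, where $\omega|0=\emptyset$.

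The second step is to use the composition order. For $w\in W_{n}$, the word $iw$ applies $\varphi_{i}$ first, so
\[
\varphi_{iw}=\varphi_{w}\circ\varphi_{i}.
\]
This is exactly the identity already used in \prettyref{lem:4-4} and \prettyref{lem:5-2-1}.

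The third step is substitution into \prettyref{eq:5-1}:
\[
M_{n+1}\left(s,t;\iota_{i}(\omega)\right)=m^{n+1}K_{\infty}\left(\varphi_{\omega|n}\left(\varphi_{i}(s)\right),\varphi_{\omega|n}\left(\varphi_{i}(t)\right)\right).
\]
The right-hand side is $m\cdot M_{n}\left(\varphi_{i}(s),\varphi_{i}(t);\omega\right)$. This is legitimate because $\varphi_{i}(s),\varphi_{i}(t)\in X_{\mathrm{fin}}$, as shown in the proof of \prettyref{thm:3-4}, so $M_{n}$ is defined at these points.

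For the equivalence with \prettyref{eq:6-2}, note that every $\omega$ can be written uniquely as $\omega=\iota_{\omega_{1}}(\sigma(\omega))$. Applying \prettyref{eq:6-1} with $i=\omega_{1}$ and with $\sigma(\omega)$ in place of $\omega$ gives \prettyref{eq:6-2}. Conversely, \prettyref{eq:6-2} evaluated at $\iota_{i}(\omega)$ gives back \prettyref{eq:6-1}, since $\sigma(\iota_{i}(\omega))=\omega$ and the first letter of $\iota_{i}(\omega)$ is $i$.

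There is no real obstacle here. The only point needing care is the order of composition: the leftmost letter of a word acts first. Reversing it would give $\varphi_{i}\circ\varphi_{w}$, and the identity would fail.
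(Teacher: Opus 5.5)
Your proposal is correct and follows the paper's proof: both use $\iota_{i}(\omega)|(n+1)=i(\omega|n)$ and $\varphi_{iw}=\varphi_{w}\circ\varphi_{i}$, substitute into the definition of $M_{n+1}$, and obtain \eqref{eq:6-2} by taking $i=\omega_{1}$ and $\sigma(\omega)$ in place of $\omega$. You also check that $\varphi_{i}(s),\varphi_{i}(t)\in X_{\mathrm{fin}}$ and prove the converse direction of the equivalence, both of which the paper leaves implicit.
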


\begin{proof}
Fix $n\ge0$, $i\in\left\{ 1,\dots,m\right\} $, and $\omega\in\Omega$.
By definition \prettyref{eq:5-1}, 
\begin{align*}
M_{n+1}\left(s,t;\iota_{i}\left(\omega\right)\right) & =m^{n+1}K_{\infty}\left(\varphi_{\left(\iota_{i}\left(\omega\right)\right)\mid\left(n+1\right)}\left(s\right),\varphi_{\left(\iota_{i}\left(\omega\right)\right)\mid\left(n+1\right)}\left(t\right)\right)\\
 & =m^{n+1}K_{\infty}\left(\varphi_{\omega\mid n}\left(\varphi_{i}\left(s\right)\right),\varphi_{\omega\mid n}\left(\varphi_{i}\left(t\right)\right)\right)\\
 & =m\,m^{n}K_{\infty}\left(\varphi_{\omega\mid n}\left(\varphi_{i}\left(s\right)\right),\varphi_{\omega\mid n}\left(\varphi_{i}\left(t\right)\right)\right)\\
 & =m\,M_{n}\left(\varphi_{i}\left(s\right),\varphi_{i}\left(t\right);\omega\right),
\end{align*}
which is \prettyref{eq:6-1}. The identity \prettyref{eq:6-2} is
the same statement with $i=\omega_{1}$ and $\sigma\left(\omega\right)$
in place of $\omega$. 
\end{proof}

We now pass to the boundary limit. Throughout we keep the notation
of the preceding section: $X_{2}\subset X_{\mathrm{fin}}$ is the
set of points $s$ for which $B\left(s\right)<\infty$, and for $s,t\in X_{2}$
the martingale $\left(M_{n}\left(s,t;\cdot\right)\right)$ converges
in $L^{2}$ and $\mu$-almost surely to $M_{\infty}\left(s,t;\cdot\right)$
by \prettyref{thm:5-5}.

Recall that, by \prettyref{lem:5-2-1}, $X_{2}$ is stable under the
maps $\varphi_{k}$, so the boundary limits $M_{\infty}\left(\varphi_{k}\left(s\right),\varphi_{k}\left(t\right);\cdot\right)$
exist whenever $s,t\in X_{2}$.
\begin{thm}
\label{thm:6-2}Fix $r\ge1$ and points $s_{1},\dots,s_{r}\in X_{2}$.
There exists a $\mu$-null set $N\left(s_{1},\dots,s_{r}\right)\subset\Omega$
with the following property:

For every $\omega\in\Omega\setminus N\left(s_{1},\dots,s_{r}\right)$
and every $k\in\left\{ 1,\dots,m\right\} $, 
\begin{equation}
M_{\infty}\left(s_{i},s_{j};\iota_{k}\left(\omega\right)\right)=m\,M_{\infty}\left(\varphi_{k}\left(s_{i}\right),\varphi_{k}\left(s_{j}\right);\omega\right)\qquad\left(1\le i,j\le r\right).\label{eq:6-3}
\end{equation}
In particular, for every $\omega\in\Omega\setminus N\left(s_{1},\dots,s_{r}\right)$
and every $i,j\in\left\{ 1,\dots,r\right\} $, one has 
\begin{equation}
M_{\infty}\left(s_{i},s_{j};\omega\right)=m\,M_{\infty}\left(\varphi_{\omega_{1}}\left(s_{i}\right),\varphi_{\omega_{1}}\left(s_{j}\right);\sigma\left(\omega\right)\right).\label{eq:6-4}
\end{equation}
\end{thm}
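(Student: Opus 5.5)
The plan is to pass to the limit in the finite-depth cocycle \prettyref{eq:6-1} along almost every path, keeping track of null sets. The only delicate point is that the right-hand side of \prettyref{eq:6-1} involves the points $\varphi_{k}(s_{i})$. The left-hand side is evaluated at $\iota_{k}(\omega)$ rather than at $\omega$. So the exceptional set has to be chosen so that it controls convergence both at $\omega$ and at $\iota_{k}(\omega)$.

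First, for every pair $(i,j)$ and every $k$, we have $\varphi_{k}(s_{i}),\varphi_{k}(s_{j})\in X_{2}$ by \prettyref{lem:5-2-1}. Hence \prettyref{thm:5-5} gives a $\mu$-null set $A_{i,j,k}$ outside of which
\[
M_{n}\left(\varphi_{k}\left(s_{i}\right),\varphi_{k}\left(s_{j}\right);\omega\right)\to M_{\infty}\left(\varphi_{k}\left(s_{i}\right),\varphi_{k}\left(s_{j}\right);\omega\right).
\]
Likewise, \prettyref{thm:5-5} gives a null set $N_{i,j}$ outside of which $M_{n}(s_{i},s_{j};\omega)\to M_{\infty}(s_{i},s_{j};\omega)$.

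The key observation is that $\mu(\iota_{k}^{-1}(N_{i,j}))=m\,\mu(N_{i,j}\cap[k])=0$. This holds because $\iota_{k}$ maps $\Omega$ bijectively onto the cylinder $[k]=\{\omega_{1}=k\}$, and $\mu\circ\iota_{k}=m\,\mu|_{[k]}$ under this identification, by the product structure of $\mu$. Set
\[
N(s_{1},\dots,s_{r}):=\bigcup_{i,j,k}\left(A_{i,j,k}\cup N_{i,j}\cup\iota_{k}^{-1}(N_{i,j})\right),
\]
which is a finite union of null sets.

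For $\omega$ outside this set, letting $n\to\infty$ in \prettyref{eq:6-1} gives \prettyref{eq:6-3} directly. The left side $M_{n+1}(s_{i},s_{j};\iota_{k}(\omega))$ converges to $M_{\infty}(s_{i},s_{j};\iota_{k}(\omega))$ because $\iota_{k}(\omega)\notin N_{i,j}$. The right side converges because $\omega\notin A_{i,j,k}$.

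For \prettyref{eq:6-4}, take $\omega\notin N$ and write $\omega=\iota_{\omega_{1}}(\sigma(\omega))$. One needs $\sigma(\omega)$ to lie outside the relevant exceptional sets. So I would enlarge $N$ by adding $\sigma^{-1}\left(\bigcup_{i,j,k}A_{i,j,k}\right)$, which is null because $\mu$ is $\sigma$-invariant. Alternatively one can argue directly from \prettyref{eq:6-2}. If $\omega\notin N_{i,j}$ and $\sigma(\omega)\notin A_{i,j,\omega_{1}}$, then both sides of \prettyref{eq:6-2} converge and the identity follows.

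The main obstacle is only the measure-theoretic bookkeeping: $M_{\infty}$ is defined merely almost everywhere, so the statement has to be read with fixed representatives. The quasi-invariance of $\mu$ under $\iota_{k}$ and $\sigma$ is what makes the transported null sets null. To be safe, I would fix $M_{\infty}$ as the pointwise $\limsup$ (real and imaginary parts) of $M_{n}$, so that the identities hold wherever the limits exist.
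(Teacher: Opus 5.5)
Your proposal is correct and is essentially the paper's proof: the paper uses the same exceptional set (failure of a.s.\ convergence for $(s_i,s_j)$, for $(\varphi_k(s_i),\varphi_k(s_j))$, and for $(s_i,s_j)$ evaluated at $\iota_k(\omega)$), then lets $n\to\infty$ in \eqref{eq:6-1}. Two of your steps spell out what the paper leaves implicit: the check that $\iota_k^{-1}(N_{i,j})$ is null, and the enlargement by $\sigma^{-1}\bigl(\bigcup_{i,j,k}A_{i,j,k}\bigr)$, which justifies \eqref{eq:6-4} where the paper simply applies \eqref{eq:6-3} at $\sigma(\omega)$ without checking that $\sigma(\omega)$ avoids the exceptional set.
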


\begin{proof}
Fix $1\le i,j\le r$. By \prettyref{thm:5-5} applied to the pairs
\[
\left(s_{i},s_{j}\right)\;\text{and }\left(\varphi_{k}\left(s_{i}\right),\varphi_{k}\left(s_{j}\right)\right)
\]
for $k\in\left\{ 1,\dots,m\right\} $, choose $\mu$-null sets on
which the almost sure limits defining $M_{\infty}$ fail. In addition,
for each $k$ we also require the almost sure limits for $\left(s_{i},s_{j}\right)$
to hold at $\iota_{k}\left(\omega\right)$. Taking the union over
the finitely many indices produces a null set $N\left(s_{1},\dots,s_{r}\right)$
such that for every $\omega\in\Omega\setminus N\left(s_{1},\dots,s_{r}\right)$,
every $k\in\left\{ 1,\dots,m\right\} $, and every $1\le i,j\le r$,
all limits 
\begin{gather*}
M_{n}\left(s_{i},s_{j};\iota_{k}\left(\omega\right)\right)\to M_{\infty}\left(s_{i},s_{j};\iota_{k}\left(\omega\right)\right),\\
M_{n}\left(\varphi_{k}\left(s_{i}\right),\varphi_{k}\left(s_{j}\right);\omega\right)\to M_{\infty}\left(\varphi_{k}\left(s_{i}\right),\varphi_{k}\left(s_{j}\right);\omega\right)
\end{gather*}
hold.

For each $n\ge0$ the finite cocycle \prettyref{eq:6-1} gives 
\[
M_{n+1}\left(s_{i},s_{j};\iota_{k}\left(\omega\right)\right)=m\,M_{n}\left(\varphi_{k}\left(s_{i}\right),\varphi_{k}\left(s_{j}\right);\omega\right).
\]
Letting $n\to\infty$ along $\omega\in\Omega\setminus N\left(s_{1},\dots,s_{r}\right)$
yields \prettyref{eq:6-3}. The identity \prettyref{eq:6-4} follows
by applying \prettyref{eq:6-3} with $k=\omega_{1}$ and $\sigma\left(\omega\right)$
in place of $\omega$. 
\end{proof}

The diagonal specializations of the boundary kernels play a distinguished
role. For $s\in X_{2}$ define 
\begin{equation}
h\left(s;\omega\right):=M_{\infty}\left(s,s;\omega\right)\in\left[0,\infty\right)\qquad\left(\omega\in\Omega\right).\label{eq:h-definition}
\end{equation}
By \prettyref{thm:5-5} and \prettyref{eq:5-8} we have 
\begin{equation}
\mathbb{E}\left[h\left(s;\cdot\right)\right]=u_{\infty}\left(s\right)\qquad\left(s\in X_{2}\right).\label{eq:h-mean}
\end{equation}
Moreover, the cocycle identity becomes a scalar relation along the
shift.
\begin{cor}
\label{cor:h-cocycle}Fix $r\ge1$ and points $s_{1},\dots,s_{r}\in X_{2}$.
Then outside the same null set $N\left(s_{1},\dots,s_{r}\right)$
from \prettyref{thm:6-2}, one has 
\begin{equation}
h\left(s_{i};\omega\right)=m\,h\left(\varphi_{\omega_{1}}\left(s_{i}\right);\sigma\left(\omega\right)\right)\qquad\left(1\le i\le r\right).\label{eq:h-cocycle}
\end{equation}
\end{cor}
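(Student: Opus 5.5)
This is the diagonal case of \prettyref{thm:6-2}, so the plan is to specialize \prettyref{eq:6-4} to $i=j$ and unwind the definition \prettyref{eq:h-definition}. Fix $\omega\in\Omega\setminus N\left(s_{1},\dots,s_{r}\right)$ and $i\in\left\{ 1,\dots,r\right\}$. Taking $j=i$ in \prettyref{eq:6-4} gives
\[
M_{\infty}\left(s_{i},s_{i};\omega\right)=m\,M_{\infty}\left(\varphi_{\omega_{1}}\left(s_{i}\right),\varphi_{\omega_{1}}\left(s_{i}\right);\sigma\left(\omega\right)\right).
\]
The left-hand side is $h\left(s_{i};\omega\right)$ by \prettyref{eq:h-definition}.

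To read the right-hand side as $h\left(\varphi_{\omega_{1}}\left(s_{i}\right);\sigma\left(\omega\right)\right)$, two things must be checked. First, $\varphi_{\omega_{1}}\left(s_{i}\right)\in X_{2}$, so that $h$ is defined at that point; this is \prettyref{lem:5-2-1}. Second, the object $M_{\infty}\left(\varphi_{k}\left(s_{i}\right),\varphi_{k}\left(s_{i}\right);\cdot\right)$ used in \prettyref{thm:6-2} must coincide with the almost sure limit that defines $h\left(\varphi_{k}\left(s_{i}\right);\cdot\right)$ via \prettyref{thm:5-5}.

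The only point needing care is this second identification. Both objects are the limit of the same martingale $M_{n}\left(\varphi_{k}\left(s_{i}\right),\varphi_{k}\left(s_{i}\right);\cdot\right)$. In the proof of \prettyref{thm:6-2}, the null set $N\left(s_{1},\dots,s_{r}\right)$ was built so that, off it, the pointwise limits
\[
M_{n}\left(\varphi_{k}\left(s_{i}\right),\varphi_{k}\left(s_{j}\right);\omega\right)\to M_{\infty}\left(\varphi_{k}\left(s_{i}\right),\varphi_{k}\left(s_{j}\right);\omega\right)
\]
hold for every $k$ and every pair $(i,j)$, including $i=j$. So at $\omega\notin N\left(s_{1},\dots,s_{r}\right)$, with $k=\omega_{1}$, the value $M_{\infty}\left(\varphi_{\omega_{1}}\left(s_{i}\right),\varphi_{\omega_{1}}\left(s_{i}\right);\sigma\left(\omega\right)\right)$ is a genuine pointwise limit. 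Here one should confirm that the proof's conditions are imposed at the point $\sigma(\omega)$, so that \prettyref{eq:6-4} holds as stated; granting \prettyref{eq:6-4}, this is automatic.

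That limit is by definition $h\left(\varphi_{\omega_{1}}\left(s_{i}\right);\sigma\left(\omega\right)\right)$. Substituting both sides yields \prettyref{eq:h-cocycle}. No further estimates are needed, and in particular no new null set has to be introduced.
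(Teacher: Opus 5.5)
Your proposal is correct and matches the paper, whose entire proof is that the statement is \prettyref{eq:6-4} with $s_{j}=s_{i}$, read through the definition \prettyref{eq:h-definition}, with the membership $\varphi_{\omega_{1}}(s_{i})\in X_{2}$ from \prettyref{lem:5-2-1} left implicit. Your ``second identification'' needs no limit argument, since $h(x;\cdot)$ is by definition the function $M_{\infty}(x,x;\cdot)$ for every $x\in X_{2}$; the issue you flag about conditions holding at $\sigma(\omega)$ concerns how \prettyref{eq:6-4} is derived inside \prettyref{thm:6-2}, not this corollary.
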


\begin{proof}
This is \prettyref{eq:6-4} with $s_{j}=s_{i}$. 
\end{proof}

We next show the pointwise domination of off-diagonal boundary values
by the diagonal boundary factors. This is the almost sure limit of
the $2\times2$ determinant bound.
\begin{cor}
\label{cor:6-4}Fix $s,t\in X_{2}$. There exists a $\mu$-null set
$N\left(s,t\right)\subset\Omega$ such that for every $\omega\in\Omega\setminus N\left(s,t\right)$,
\begin{equation}
\left|M_{\infty}\left(s,t;\omega\right)\right|^{2}\le h\left(s;\omega\right)h\left(t;\omega\right).\label{eq:6-8}
\end{equation}
\end{cor}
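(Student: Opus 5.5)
The plan is to obtain \prettyref{eq:6-8} as an almost sure limit of the corresponding finite-depth inequality, in the same way the positive semidefiniteness statement of \prettyref{thm:5-6} was obtained.

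First, fix $n\ge0$ and $\omega\in\Omega$. The points $\varphi_{\omega\mid n}\left(s\right)$ and $\varphi_{\omega\mid n}\left(t\right)$ lie in $X_{\mathrm{fin}}$, so \prettyref{lem:3-1}, applied to the positive definite kernel $K_{\infty}$ at these two points and multiplied by $m^{2n}$, gives
\[
\left|M_{n}\left(s,t;\omega\right)\right|^{2}\le M_{n}\left(s,s;\omega\right)M_{n}\left(t,t;\omega\right).
\]
This holds for every $\omega$ and every $n$, with no exceptional set.

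Second, choose the null set. Apply \prettyref{thm:5-5} to the three pairs $\left(s,t\right)$, $\left(s,s\right)$ and $\left(t,t\right)$, all of which lie in $X_{2}\times X_{2}$. This yields three $\mu$-null sets off which the respective martingales converge pointwise, and we let $N\left(s,t\right)$ be their union. An alternative is to take $N\left(s,t\right)=N\left(s,t\right)$ from \prettyref{thm:5-6} with $r=2$, $s_{1}=s$, $s_{2}=t$, since that set already contains the failure sets for all four entries.

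Third, fix $\omega\notin N\left(s,t\right)$ and let $n\to\infty$ in the finite-depth inequality. The left side converges to $\left|M_{\infty}\left(s,t;\omega\right)\right|^{2}$ by continuity of $z\mapsto\left|z\right|^{2}$. The right side converges to $h\left(s;\omega\right)h\left(t;\omega\right)$ by the definition \prettyref{eq:h-definition}. Since non-strict inequalities are preserved under limits, \prettyref{eq:6-8} follows.

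Equivalently, one could read the result directly off \prettyref{thm:5-6}: for $\omega$ outside the null set, the $2\times2$ matrix $\left[M_{\infty}\left(s_{i},s_{j};\omega\right)\right]$ is positive semidefinite, so its determinant is nonnegative. This also needs $M_{\infty}\left(t,s;\omega\right)=\overline{M_{\infty}\left(s,t;\omega\right)}$, which holds off the null set because the same symmetry holds at each finite $n$.

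There is no real obstacle. The only point needing care is to make sure the null set covers all the pairs entering the inequality, namely the off-diagonal pair and both diagonal pairs.
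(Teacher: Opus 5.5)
Your proposal is correct and takes essentially the same route as the paper. The paper also gets $\left|M_{n}\left(s,t;\omega\right)\right|^{2}\le M_{n}\left(s,s;\omega\right)M_{n}\left(t,t;\omega\right)$ for every $n$ and $\omega$ from the nonnegative determinant of the $m^{n}$-scaled $2\times2$ Gram matrix of $K_{\infty}$ at $\varphi_{\omega\mid n}\left(s\right),\varphi_{\omega\mid n}\left(t\right)$, then passes to the limit using the almost sure convergence from \prettyref{thm:5-5} for the pairs $\left(s,t\right)$, $\left(s,s\right)$, $\left(t,t\right)$.
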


\begin{proof}
For each $n\ge0$ and each $\omega\in\Omega$, the $2\times2$ Gram
matrix 
\begin{multline*}
\left[\begin{matrix}M_{n}\left(s,s;\omega\right) & M_{n}\left(s,t;\omega\right)\\
M_{n}\left(t,s;\omega\right) & M_{n}\left(t,t;\omega\right)
\end{matrix}\right]\\
=m^{n}\left[\begin{matrix}K_{\infty}\left(\varphi_{\omega\mid n}\left(s\right),\varphi_{\omega\mid n}\left(s\right)\right) & K_{\infty}\left(\varphi_{\omega\mid n}\left(s\right),\varphi_{\omega\mid n}\left(t\right)\right)\\
K_{\infty}\left(\varphi_{\omega\mid n}\left(t\right),\varphi_{\omega\mid n}\left(s\right)\right) & K_{\infty}\left(\varphi_{\omega\mid n}\left(t\right),\varphi_{\omega\mid n}\left(t\right)\right)
\end{matrix}\right]
\end{multline*}
is positive semidefinite. Hence its determinant is nonnegative, which
gives 
\[
\left|M_{n}\left(s,t;\omega\right)\right|^{2}\le M_{n}\left(s,s;\omega\right)M_{n}\left(t,t;\omega\right)\qquad\left(n\ge0\right).
\]
By \prettyref{thm:5-5}, the limits $M_{n}\left(s,t;\omega\right)\to M_{\infty}\left(s,t;\omega\right)$,
$M_{n}\left(s,s;\omega\right)\to h\left(s;\omega\right)$, and $M_{n}\left(t,t;\omega\right)\to h\left(t;\omega\right)$
hold for $\mu$-almost every $\omega$. Passing to the limit along
such $\omega$ yields \prettyref{eq:6-8}. 
\end{proof}

When the diagonal boundary factors are strictly positive, it is convenient
to normalize the boundary kernel. For $s,t\in X_{2}$ and $\omega\in\Omega$,
set 
\begin{equation}
\widetilde{K}^{\omega}\left(s,t\right):=\begin{cases}
\dfrac{M_{\infty}\left(s,t;\omega\right)}{\left(h\left(s;\omega\right)h\left(t;\omega\right)\right)^{1/2}}, & h\left(s;\omega\right)h\left(t;\omega\right)>0,\\[1.2ex]
0, & h\left(s;\omega\right)h\left(t;\omega\right)=0.
\end{cases}\label{eq:6-9}
\end{equation}
The preceding lemma implies $|\widetilde{K}^{\omega}\left(s,t\right)|\le1$
whenever $h\left(s;\omega\right)h\left(t;\omega\right)>0$, and $\widetilde{K}^{\omega}\left(s,s\right)=1$
whenever $h\left(s;\omega\right)>0$.
\begin{cor}
\label{cor:6-5}Fix $r\ge1$ and points $s_{1},\dots,s_{r}\in X_{2}$.
There exists a $\mu$-null set $N\left(s_{1},\dots,s_{r}\right)\subset\Omega$
such that for every $\omega\in\Omega\setminus N\left(s_{1},\dots,s_{r}\right)$
the matrix 
\[
\left[M_{\infty}\left(s_{i},s_{j};\omega\right)\right]^{r}_{i,j=1}
\]
is positive semidefinite and satisfies the cocycle identity \prettyref{eq:6-4}.
Moreover, for every $\omega\in\Omega\setminus N\left(s_{1},\dots,s_{r}\right)$
the normalized matrix 
\[
\left[\widetilde{K}^{\omega}\left(s_{i},s_{j}\right)\right]^{r}_{i,j=1}
\]
is positive semidefinite after restricting to the indices $i$ for
which $h\left(s_{i};\omega\right)>0$.
\end{cor}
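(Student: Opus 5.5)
The plan is to take the null set to be the union of the exceptional sets already constructed in \prettyref{thm:5-6} and \prettyref{thm:6-2} for the same finite configuration $s_{1},\dots,s_{r}$. Both are finite unions of $\mu$-null sets, so their union $N\left(s_{1},\dots,s_{r}\right)$ is again $\mu$-null. Outside it, two facts hold at once. By \prettyref{thm:5-6}, the Gram matrix $G_{\infty}\left(\omega\right)=\left[M_{\infty}\left(s_{i},s_{j};\omega\right)\right]$ is positive semidefinite. By \prettyref{thm:6-2}, the cocycle identity \prettyref{eq:6-4} holds. This already gives the first assertion.

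For the normalized matrix, fix $\omega$ outside the null set and let $I(\omega):=\left\{ i:h\left(s_{i};\omega\right)>0\right\}$. For $i,j\in I(\omega)$ the product $h(s_{i};\omega)h(s_{j};\omega)$ is positive, so by \prettyref{eq:6-9}
\[
\widetilde{K}^{\omega}\left(s_{i},s_{j}\right)=d_{i}^{-1}M_{\infty}\left(s_{i},s_{j};\omega\right)d_{j}^{-1},\qquad d_{i}:=h\left(s_{i};\omega\right)^{1/2}>0.
\]
Thus the restricted normalized matrix equals $D^{-1}G_{I}D^{-1}$. Here $G_{I}$ is the principal submatrix of $G_{\infty}(\omega)$ on the indices $I(\omega)$, and $D=\mathrm{diag}(d_{i})_{i\in I(\omega)}$. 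A principal submatrix of a positive semidefinite matrix is positive semidefinite, and congruence by the invertible real diagonal $D^{-1}$ preserves positive semidefiniteness: $c^{*}D^{-1}G_{I}D^{-1}c=(D^{-1}c)^{*}G_{I}(D^{-1}c)\ge0$. This proves the second assertion.

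I do not expect a real obstacle. The only point needing care is the diagonal entries. Since $h(s_{i};\omega)=M_{\infty}(s_{i},s_{i};\omega)$ is exactly the $(i,i)$ entry of $G_{\infty}(\omega)$, the normalization uses the same null set and needs no additional almost-sure statements. The entrywise bound of \prettyref{cor:6-4} is not required here, although it is consistent with the result, since it gives diagonal entries equal to $1$ and off-diagonal entries of modulus at most $1$.
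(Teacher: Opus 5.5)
Your proposal is correct and follows essentially the same route as the paper. The paper also takes the union of the exceptional sets from Theorems \ref{thm:5-6} and \ref{thm:6-2}, writes the principal submatrix on $I$ as $D(\omega)\bigl[\widetilde{K}^{\omega}(s_i,s_j)\bigr]_{i,j\in I}D(\omega)$ with $D(\omega)=\mathrm{diag}\bigl(h(s_i;\omega)^{1/2}\bigr)_{i\in I}$ invertible, and transfers positive semidefiniteness by this congruence.
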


\begin{proof}
The first assertion is the combination of \prettyref{thm:5-6} and
\prettyref{thm:6-2}, after replacing the exceptional set by a union.
For the normalized matrix, fix $\omega$ outside this exceptional
set and let $I\subset\left\{ 1,\dots,r\right\} $ be the set of indices
with $h\left(s_{i};\omega\right)>0$. Write $D\left(\omega\right)$
for the diagonal matrix with diagonal entries $h\left(s_{i};\omega\right)^{1/2}$
for $i\in I$. Then by construction, 
\[
\left[M_{\infty}\left(s_{i},s_{j};\omega\right)\right]^{\left|I\right|}_{i,j\in I}=D\left(\omega\right)\left[\widetilde{K}^{\omega}\left(s_{i},s_{j}\right)\right]^{\left|I\right|}_{i,j\in I}D\left(\omega\right).
\]
Since $D\left(\omega\right)$ is invertible on the index set $I$,
the matrix $\left[\widetilde{K}^{\omega}\left(s_{i},s_{j}\right)\right]_{i,j\in I}$
is positive semidefinite whenever $\left[M_{\infty}\left(s_{i},s_{j};\omega\right)\right]_{i,j\in I}$
is positive semidefinite. 
\end{proof}

\begin{rem}
The identities \prettyref{eq:6-4} and \prettyref{eq:h-cocycle} allow
one to move boundary information across levels while remaining on
the boundary probability space $\Omega$. The inequality \prettyref{eq:6-8}
shows that the diagonal boundary factors $h\left(s;\omega\right)$
dominate all off-diagonal boundary values pointwise. In particular,
whenever $h\left(s;\omega\right)>0$ on a given finite configuration,
the normalized boundary kernel $\widetilde{K}^{\omega}$ is bounded
by $1$ in modulus and has diagonal $1$ on that configuration. These
facts will be used to construct and compare further invariant majorants
without returning to the diagonal tower beyond the boundary factors. 
\end{rem}

\section{Weighted boundary kernels and invariant majorants}\label{sec:7}

We now show that the boundary kernels constructed above generate an
explicit cone of positive definite kernels. The shift cocycle identifies
when such kernels are $L$-invariant. Throughout we keep the notation
from the preceding sections: $K_{\infty}$ is the invariant completion
on $X_{\mathrm{fin}}$, $X_{2}\subset X_{\mathrm{fin}}$ is the locus
where $B\left(s\right)<\infty$, and for $s,t\in X_{2}$ we have the
boundary limits $M_{\infty}\left(s,t;\omega\right)$ and diagonal
boundary factors $h\left(s;\omega\right)=M_{\infty}\left(s,s;\omega\right)$.

Let $f:\Omega\to\left[0,\infty\right]$ be a measurable function.
Define the domain 
\begin{equation}
X_{f}:=\left\{ s\in X_{2}:\int_{\Omega}f\left(\omega\right)h\left(s;\omega\right)d\mu\left(\omega\right)<\infty\right\} .\label{eq:7-1}
\end{equation}
For $s,t\in X_{f}$ we define a weighted boundary kernel by 
\begin{equation}
J_{f}\left(s,t\right):=\int_{\Omega}f\left(\omega\right)M_{\infty}\left(s,t;\omega\right)d\mu\left(\omega\right).\label{eq:7-2}
\end{equation}
The pointwise domination \prettyref{eq:6-8} ensures that \prettyref{eq:7-2}
is well-defined on $X_{f}\times X_{f}$.
\begin{lem}
\label{lem:7-1}Fix a measurable $f:\Omega\to\left[0,\infty\right]$.
If $s,t\in X_{f}$, then the integral \prettyref{eq:7-2} converges
absolutely and 
\begin{equation}
\left|J_{f}\left(s,t\right)\right|^{2}\le\left(\int_{\Omega}f\left(\omega\right)h\left(s;\omega\right)d\mu\left(\omega\right)\right)\left(\int_{\Omega}f\left(\omega\right)h\left(t;\omega\right)d\mu\left(\omega\right)\right).\label{eq:7-3}
\end{equation}
\end{lem}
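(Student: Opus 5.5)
The plan is to combine the pointwise domination of \prettyref{cor:6-4} with the Cauchy--Schwarz inequality in $L^{2}(\Omega,f\,d\mu)$.

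First, fix $s,t\in X_{f}$ and let $N(s,t)$ be the $\mu$-null set from \prettyref{cor:6-4}. For $\omega\notin N(s,t)$ we have
\[
\left|M_{\infty}(s,t;\omega)\right|\le h(s;\omega)^{1/2}h(t;\omega)^{1/2}.
\]
Multiplying by $f(\omega)\ge0$ gives, $\mu$-almost everywhere,
\[
f(\omega)\left|M_{\infty}(s,t;\omega)\right|\le\bigl(f(\omega)h(s;\omega)\bigr)^{1/2}\bigl(f(\omega)h(t;\omega)\bigr)^{1/2}.
\]
Here $h\ge0$ almost everywhere, being a limit of the nonnegative diagonal martingale.

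Second, apply Cauchy--Schwarz in $L^{2}(\Omega,\mu)$ to the two functions $(fh(s;\cdot))^{1/2}$ and $(fh(t;\cdot))^{1/2}$. Both are square integrable precisely because $s,t\in X_{f}$. This gives
\[
\int_{\Omega}f\left|M_{\infty}(s,t;\cdot)\right|d\mu\le\Bigl(\int_{\Omega}fh(s;\cdot)\,d\mu\Bigr)^{1/2}\Bigl(\int_{\Omega}fh(t;\cdot)\,d\mu\Bigr)^{1/2}<\infty,
\]
so the integral \prettyref{eq:7-2} converges absolutely. Since $\left|J_{f}(s,t)\right|\le\int f\left|M_{\infty}(s,t;\cdot)\right|d\mu$, squaring yields \prettyref{eq:7-3}.

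The only delicate point is measure-theoretic bookkeeping, since $f$ may take the value $+\infty$. On the set $\{f=\infty\}$, finiteness of $\int fh(s;\cdot)\,d\mu$ forces $h(s;\cdot)=0$ almost everywhere there, and likewise for $t$. By the domination, $M_{\infty}(s,t;\cdot)=0$ almost everywhere on that set, so we use the convention $\infty\cdot0=0$ consistently. Measurability of $M_{\infty}(s,t;\cdot)$ is immediate, as it is an almost sure limit of the $\mathcal{F}_{n}$-measurable $M_{n}$. I do not expect any genuine obstacle beyond this.
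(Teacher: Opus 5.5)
Your proposal is correct and follows the paper's proof essentially step for step. You use the pointwise domination from \prettyref{cor:6-4}, then apply Cauchy--Schwarz in $L^{2}(\Omega,\mu)$ to bound $\int_{\Omega}f\left|M_{\infty}(s,t;\cdot)\right|d\mu$, which gives both absolute convergence and \prettyref{eq:7-3}. Your extra remarks on the set $\{f=\infty\}$ and on measurability of $M_{\infty}$ are valid bookkeeping that the paper leaves implicit.
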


\begin{proof}
Fix $s,t\in X_{f}$. By \prettyref{cor:6-4} we may choose a $\mu$-null
set outside of which $\left|M_{\infty}\left(s,t;\omega\right)\right|\le h\left(s;\omega\right)^{1/2}h\left(t;\omega\right)^{1/2}$.
Therefore 
\[
\int_{\Omega}f\left(\omega\right)\left|M_{\infty}\left(s,t;\omega\right)\right|d\mu\left(\omega\right)\le\int_{\Omega}f\left(\omega\right)h\left(s;\omega\right)^{1/2}h\left(t;\omega\right)^{1/2}d\mu\left(\omega\right).
\]
Apply Cauchy-Schwarz in $L^{2}\left(\Omega,\mu\right)$ to obtain
\begin{multline*}
\int_{\Omega}f\left(\omega\right)h\left(s;\omega\right)^{1/2}h\left(t;\omega\right)^{1/2}d\mu\left(\omega\right)\\
\le\left(\int_{\Omega}f\left(\omega\right)h\left(s;\omega\right)d\mu\left(\omega\right)\right)^{1/2}\left(\int_{\Omega}f\left(\omega\right)h\left(t;\omega\right)d\mu\left(\omega\right)\right)^{1/2}.
\end{multline*}
The right-hand side is finite because $s,t\in X_{f}$. This gives
absolute convergence of \prettyref{eq:7-2}. The inequality \prettyref{eq:7-3}
follows by applying the same Cauchy-Schwarz bound directly to the
integral in \prettyref{eq:7-2}. 
\end{proof}

The next result shows that $J_{f}$ is positive definite. The proof
is finite-dimensional and uses only positivity of the boundary Gram
matrices from \prettyref{thm:5-6}.
\begin{thm}
\label{thm:7-2}Fix a measurable $f:\Omega\to\left[0,\infty\right]$.
Then $J_{f}$ is a positive definite kernel on $X_{f}$. More precisely,
for every $r\ge1$ and every $s_{1},\dots,s_{r}\in X_{f}$, the Gram
matrix 
\[
\left[J_{f}\left(s_{i},s_{j}\right)\right]^{r}_{i,j=1}
\]
is positive semidefinite. 
\end{thm}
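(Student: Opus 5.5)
The plan is to reduce positivity of $J_{f}$ to the almost-sure positivity of the boundary Gram matrices from \prettyref{thm:5-6}, and then integrate against the nonnegative weight $f$. Fix $r\ge1$, points $s_{1},\dots,s_{r}\in X_{f}$, and a vector $c=(c_{1},\dots,c_{r})\in\mathbb{C}^{r}$. By \prettyref{lem:7-1}, each integral $J_{f}(s_{i},s_{j})$ converges absolutely, since every $s_{i}$ lies in $X_{f}$. Finite linearity of the integral then gives
\[
\sum_{i,j=1}^{r}\overline{c_{i}}c_{j}J_{f}\left(s_{i},s_{j}\right)=\int_{\Omega}f\left(\omega\right)Q\left(\omega\right)d\mu\left(\omega\right),\qquad Q\left(\omega\right):=\sum_{i,j=1}^{r}\overline{c_{i}}c_{j}M_{\infty}\left(s_{i},s_{j};\omega\right).
\]
The integrand $f(\omega)Q(\omega)$ is integrable. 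Indeed, off the null set of \prettyref{cor:6-4} applied to each pair $(s_{i},s_{j})$, we have $|Q(\omega)|\le\sum_{i,j}|c_{i}||c_{j}|h(s_{i};\omega)^{1/2}h(s_{j};\omega)^{1/2}$, and each term times $f$ is integrable by the Cauchy--Schwarz step in \prettyref{lem:7-1}.

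Next, \prettyref{thm:5-6} supplies a $\mu$-null set $N(s_{1},\dots,s_{r})$ outside of which the matrix $[M_{\infty}(s_{i},s_{j};\omega)]_{i,j}$ is positive semidefinite. Hence $Q(\omega)\ge0$, and in particular $Q(\omega)$ is real, for $\mu$-a.e.\ $\omega$. Since $f\ge0$, the product $fQ$ is a.e.\ nonnegative. The integral is therefore $\ge0$, which is exactly the positive semidefiniteness of $[J_{f}(s_{i},s_{j})]$.

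The main obstacle is bookkeeping around $f$ taking the value $+\infty$, together with the convention $0\cdot\infty$. The set $\{f=\infty\}$ must be handled so that $\int fQ$ is well defined and splits linearly. My first approach is to note that finiteness of $\int f h(s_{i};\cdot)\,d\mu$ forces $h(s_{i};\cdot)=0$ a.e.\ on $\{f=\infty\}$ wherever that set has positive measure. By \prettyref{cor:6-4}, all off-diagonal values $M_{\infty}(s_{i},s_{j};\cdot)$ then also vanish a.e.\ there. With the convention $\infty\cdot0=0$, the integrand vanishes a.e.\ on $\{f=\infty\}$, and one can integrate over $\{f<\infty\}$ without changing any value of $J_{f}$.

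A second, minor point is that $M_{\infty}(s_{i},s_{j};\cdot)$ is only defined up to null sets. This is harmless: only finitely many pairs occur, and the finite union of the exceptional sets is still null.
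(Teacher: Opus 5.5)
Your proof is correct and follows essentially the same route as the paper: you use almost-sure positive semidefiniteness of $[M_{\infty}(s_i,s_j;\omega)]_{i,j}$ from \prettyref{thm:5-6}, entrywise integrability of $f\,M_{\infty}$ via \prettyref{cor:6-4} and Cauchy--Schwarz, and then integrate the a.e.\ nonnegative quantity $f(\omega)Q(\omega)$. Your extra observation that $h(s_i;\cdot)=0$ a.e.\ on $\{f=\infty\}$, and hence $M_{\infty}(s_i,s_j;\cdot)=0$ a.e.\ there, is correct bookkeeping that the paper leaves implicit.
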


\begin{proof}
Fix $r\ge1$ and points $s_{1},\dots,s_{r}\in X_{f}$. By \prettyref{thm:5-6}
there exists a $\mu$-null set $N\left(s_{1},\dots,s_{r}\right)$
such that for every $\omega\in\Omega\setminus N\left(s_{1},\dots,s_{r}\right)$
the matrix 
\[
G\left(\omega\right):=\left[M_{\infty}\left(s_{i},s_{j};\omega\right)\right]^{r}_{i,j=1}
\]
is positive semidefinite.

Define the matrix-valued function $H\left(\omega\right):=f\left(\omega\right)G\left(\omega\right)$.
Since $f\ge0$, each $H\left(\omega\right)$ is positive semidefinite
for $\omega\in\Omega\setminus N\left(s_{1},\dots,s_{r}\right)$.

We claim that each entry of $H$ is integrable. Fix $i,j$. By \prettyref{eq:6-8},
outside a $\mu$-null set we have 
\[
\left|f\left(\omega\right)M_{\infty}\left(s_{i},s_{j};\omega\right)\right|\le f\left(\omega\right)h\left(s_{i};\omega\right)^{1/2}h\left(s_{j};\omega\right)^{1/2}.
\]
Since $s_{i},s_{j}\in X_{f}$, Cauchy-Schwarz gives 
\begin{multline*}
\int_{\Omega}f\left(\omega\right)h\left(s_{i};\omega\right)^{1/2}h\left(s_{j};\omega\right)^{1/2}d\mu\left(\omega\right)\\
\le\left(\int_{\Omega}f\left(\omega\right)h\left(s_{i};\omega\right)d\mu\left(\omega\right)\right)^{1/2}\left(\int_{\Omega}f\left(\omega\right)h\left(s_{j};\omega\right)d\mu\left(\omega\right)\right)^{1/2}<\infty.
\end{multline*}
Thus $H_{i,j}$ is integrable and 
\[
\int_{\Omega}H_{i,j}\left(\omega\right)\,d\mu\left(\omega\right)=\int_{\Omega}f\left(\omega\right)M_{\infty}\left(s_{i},s_{j};\omega\right)\,d\mu\left(\omega\right)=J_{f}\left(s_{i},s_{j}\right).
\]
To prove that the Gram matrix of $J_{f}$ is positive semidefinite,
fix $c\in\mathbb{C}^{r}$. For $\omega\in\Omega\setminus N\left(s_{1},\dots,s_{r}\right)$,
positivity of $H\left(\omega\right)$ gives $c^{*}H\left(\omega\right)c\ge0$.
Moreover, $c^{*}H\left(\omega\right)c$ is integrable because it is
a finite sum of integrable entries. Therefore 
\begin{align*}
c^{*}\left[J_{f}\left(s_{i},s_{j}\right)\right]^{r}_{i,j=1}c & =\sum^{r}_{i,j=1}\overline{c_{i}}c_{j}J_{f}\left(s_{i},s_{j}\right)\\
 & =\int_{\Omega}\sum^{r}_{i,j=1}\overline{c_{i}}c_{j}f\left(\omega\right)M_{\infty}\left(s_{i},s_{j};\omega\right)d\mu\left(\omega\right)\\
 & =\int_{\Omega}c^{*}H\left(\omega\right)c\,d\mu\left(\omega\right)\ge0.
\end{align*}
Since $c$ was arbitrary, the Gram matrix is positive semidefinite. 
\end{proof}

We next identify how $L$ acts on the weighted boundary kernels. The
shift cocycle from \prettyref{thm:6-2} implies that applying $L$
corresponds to shifting the weight.
\begin{thm}
\label{thm:7-3}Fix a measurable $f:\Omega\to\left[0,\infty\right]$.
Define $f\circ\sigma$ by $f\circ\sigma\left(\omega\right)=f\left(\sigma\left(\omega\right)\right)$.
Then for all $s,t\in X_{f\circ\sigma}$, 
\begin{equation}
\left(LJ_{f}\right)\left(s,t\right)=J_{f\circ\sigma}\left(s,t\right).\label{eq:7-4}
\end{equation}
In particular, if $f=f\circ\sigma$ $\mu$-almost surely, then $J_{f}$
is $L$-invariant on $X_{f}$. 
\end{thm}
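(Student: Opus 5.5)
The plan is to unfold $L$, use the cocycle of \prettyref{thm:6-2} to move each $\varphi_{k}$ into the boundary variable as a prefix map $\iota_{k}$, and then undo $\iota_{k}$ by a change of variables on $\Omega$. The basic measure-theoretic input is the following elementary identity for the uniform product measure. For every $k\in\left\{ 1,\dots,m\right\}$ and every measurable $g\ge0$ (or $g$ integrable on the cylinder $[k]:=\left\{ \omega:\omega_{1}=k\right\}$),
\[
\int_{\Omega}g\left(\iota_{k}\left(\omega\right)\right)d\mu\left(\omega\right)=m\int_{[k]}g\,d\mu .
\]
This holds because $\iota_{k}$ is a bijection of $\Omega$ onto $[k]$ and carries $\mu$ to $m\,\mu|_{[k]}$. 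One checks this on cylinders and extends by the $\pi$--$\lambda$ theorem. In particular, preimages under $\iota_{k}$ of $\mu$-null sets are $\mu$-null, which is consistent with the construction of the exceptional set in \prettyref{thm:6-2}.

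\textbf{Domain step.} Fix $s,t\in X_{f\circ\sigma}$. We must check that $\varphi_{k}(s),\varphi_{k}(t)\in X_{f}$, so that the left side of \prettyref{eq:7-4} is defined. First, $\varphi_{k}(s)\in X_{2}$ by \prettyref{lem:5-2-1}. Next, the diagonal cocycle \prettyref{cor:h-cocycle}, in its $\iota_{k}$ form from \prettyref{eq:6-3}, reads $m\,h(\varphi_{k}(s);\omega)=h(s;\iota_{k}(\omega))$ for a.e.\ $\omega$. Since $f(\omega)=(f\circ\sigma)(\iota_{k}(\omega))$, the change of variables gives
\[
\int_{\Omega}f\,h\left(\varphi_{k}\left(s\right);\cdot\right)d\mu=\frac{1}{m}\int_{\Omega}\left(f\circ\sigma\right)\left(\iota_{k}\omega\right)h\left(s;\iota_{k}\omega\right)d\mu\left(\omega\right)=\int_{[k]}\left(f\circ\sigma\right)h\left(s;\cdot\right)d\mu .
\]
The right-hand side is at most $\int_{\Omega}(f\circ\sigma)h(s;\cdot)\,d\mu<\infty$. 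All integrands here are nonnegative, so no integrability issue arises, even when $f$ takes the value $\infty$.

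\textbf{Main computation.} Expand
\[
(LJ_{f})(s,t)=\sum_{k=1}^{m}\int_{\Omega}f(\omega)\,M_{\infty}\left(\varphi_{k}(s),\varphi_{k}(t);\omega\right)d\mu(\omega).
\]
In each term, replace $m\,M_{\infty}(\varphi_{k}(s),\varphi_{k}(t);\omega)$ by $M_{\infty}(s,t;\iota_{k}(\omega))$ using \prettyref{eq:6-3}, which holds off a null set. Also replace $f(\omega)$ by $(f\circ\sigma)(\iota_{k}\omega)$. The $k$-th term then becomes $\frac{1}{m}\int_{\Omega}G(\iota_{k}\omega)\,d\mu(\omega)$ with $G:=(f\circ\sigma)\,M_{\infty}(s,t;\cdot)$. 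By the change of variables this equals $\int_{[k]}G\,d\mu$.

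The function $G$ is integrable on all of $\Omega$ by \prettyref{lem:7-1}, applied to the weight $f\circ\sigma$ together with \prettyref{eq:6-8}. This justifies applying the change of variables to the complex-valued $G$, for instance by splitting it into real and imaginary, positive and negative parts. Summing over $k$ and using the disjoint decomposition $\Omega=\bigsqcup_{k}[k]$ gives $\int_{\Omega}G\,d\mu=J_{f\circ\sigma}(s,t)$, which is \prettyref{eq:7-4}.

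\textbf{Invariant case.} If $f=f\circ\sigma$ a.s., then $X_{f}=X_{f\circ\sigma}$ and $J_{f}=J_{f\circ\sigma}$, since both the domain and the kernel are defined by integrals that ignore null sets. Hence $LJ_{f}=J_{f}$ on $X_{f}$, and the domain step shows that $L$ maps into the correct domain.

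The main obstacle is the bookkeeping of null sets and integrability rather than any algebra. One must make sure that the a.e.\ cocycle identity may be used inside the integral after composing with $\iota_{k}$; the quasi-invariance of $\mu$ under $\iota_{k}$ handles this. One must also make sure that the split into cylinder pieces is legitimate for complex integrands; the domination $|M_{\infty}|\le h^{1/2}h^{1/2}$ together with $s,t\in X_{f\circ\sigma}$ provides this.
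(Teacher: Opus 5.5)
Your proposal is correct and follows essentially the same route as the paper. Both arguments handle the domain via the diagonal cocycle restricted to the cylinder $\{\eta_1=k\}$, then apply the prefix cocycle \eqref{eq:6-3} and a change of variables that undoes $\iota_k$. The only differences are cosmetic: you change variables cylinder by cylinder using $(\iota_k)_*\mu=m\,\mu|_{[k]}$ and sum over $\Omega=\bigsqcup_k[k]$, whereas the paper handles all $k$ at once through $T(i,\omega)=\iota_i(\omega)$ pushing $\bigl(\tfrac{1}{m}\sum_i\delta_i\bigr)\otimes\mu$ forward to $\mu$; and your explicit integrability check via \prettyref{lem:7-1} for the weight $f\circ\sigma$ fills in a point the paper leaves implicit.
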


\begin{proof}
Fix $s,t\in X_{f\circ\sigma}$. We first note that $\varphi_{i}\left(s\right),\varphi_{i}\left(t\right)\in X_{f}$
for every $i\in\left\{ 1,\dots,m\right\} $. Indeed, using the diagonal
cocycle \prettyref{eq:6-3} on the diagonal, for $\mu$-almost every
$\omega$, 
\[
h\left(s;\iota_{i}\left(\omega\right)\right)=m\,h\left(\varphi_{i}\left(s\right);\omega\right),h\left(t;\iota_{i}\left(\omega\right)\right)=m\,h\left(\varphi_{i}\left(t\right);\omega\right).
\]
Hence 
\begin{align*}
\int_{\Omega}f\left(\omega\right)h\left(\varphi_{i}\left(s\right);\omega\right)d\mu\left(\omega\right) & =\frac{1}{m}\int_{\Omega}f\left(\omega\right)h\left(s;\iota_{i}\left(\omega\right)\right)d\mu\left(\omega\right)\\
 & =\int_{\left\{ \eta\in\Omega:\eta_{1}=i\right\} }f\left(\sigma\left(\eta\right)\right)h\left(s;\eta\right)d\mu\left(\eta\right)\\
 & \le\int_{\Omega}\left(f\circ\sigma\right)\left(\eta\right)h\left(s;\eta\right)d\mu\left(\eta\right),
\end{align*}
which is finite because $s\in X_{f\circ\sigma}$. Thus $\varphi_{i}\left(s\right)\in X_{f}$,
and similarly $\varphi_{i}\left(t\right)\in X_{f}$.

Now $J_{f}\left(\varphi_{i}\left(s\right),\varphi_{i}\left(t\right)\right)$
is well-defined for each $i$, and we compute 
\[
\left(LJ_{f}\right)\left(s,t\right)=\sum^{m}_{i=1}J_{f}\left(\varphi_{i}\left(s\right),\varphi_{i}\left(t\right)\right)=\sum^{m}_{i=1}\int_{\Omega}f\left(\omega\right)M_{\infty}\left(\varphi_{i}\left(s\right),\varphi_{i}\left(t\right);\omega\right)\,d\mu\left(\omega\right).
\]
Since the sum is finite and each integral converges absolutely, we
may interchange sum and integral: 
\[
\left(LJ_{f}\right)\left(s,t\right)=\int_{\Omega}f\left(\omega\right)\sum^{m}_{i=1}M_{\infty}\left(\varphi_{i}\left(s\right),\varphi_{i}\left(t\right);\omega\right)d\mu\left(\omega\right).
\]
We compute the inner sum using the prefix cocycle \prettyref{eq:6-3}:
for $\mu$-almost every $\omega$ and every $i$, 
\[
M_{\infty}\left(\varphi_{i}\left(s\right),\varphi_{i}\left(t\right);\omega\right)=\frac{1}{m}M_{\infty}\left(s,t;\iota_{i}\left(\omega\right)\right).
\]
Hence 
\[
\sum^{m}_{i=1}M_{\infty}\left(\varphi_{i}\left(s\right),\varphi_{i}\left(t\right);\omega\right)=\frac{1}{m}\sum^{m}_{i=1}M_{\infty}\left(s,t;\iota_{i}\left(\omega\right)\right),
\]
and therefore 
\[
\left(LJ_{f}\right)\left(s,t\right)=\int_{\Omega}f\left(\omega\right)\frac{1}{m}\sum^{m}_{i=1}M_{\infty}\left(s,t;\iota_{i}\left(\omega\right)\right)d\mu\left(\omega\right).
\]
Finally, use the product structure of $\mu$. The map 
\[
T:\left\{ 1,\dots,m\right\} \times\Omega\to\Omega,\qquad T\left(i,\omega\right)=\iota_{i}\left(\omega\right),
\]
pushes forward the product measure $\left(\frac{1}{m}\sum^{m}_{i=1}\delta_{i}\right)\otimes\mu$
to $\mu$, and $\sigma\circ T\left(i,\omega\right)=\omega$. Therefore,
by a change of variables, 
\begin{align*}
\left(LJ_{f}\right)\left(s,t\right) & =\int_{\Omega}\frac{1}{m}\sum^{m}_{i=1}f\left(\omega\right)M_{\infty}\left(s,t;\iota_{i}\left(\omega\right)\right)d\mu\left(\omega\right)\\
 & =\int_{\left\{ 1,\dots,m\right\} \times\Omega}f\left(\omega\right)M_{\infty}\left(s,t;T\left(i,\omega\right)\right)d\left(\left(\frac{1}{m}\sum^{m}_{i=1}\delta_{i}\right)\otimes\mu\right)\left(i,\omega\right)\\
 & =\int_{\Omega}f\left(\sigma\left(\eta\right)\right)M_{\infty}\left(s,t;\eta\right)d\mu\left(\eta\right)\\
 & =\int_{\Omega}\left(f\circ\sigma\right)\left(\eta\right)M_{\infty}\left(s,t;\eta\right)d\mu\left(\eta\right)\\
 & =J_{f\circ\sigma}\left(s,t\right),
\end{align*}
which is \prettyref{eq:7-4}.

If $f=f\circ\sigma$ $\mu$-almost surely, then $X_{f}=X_{f\circ\sigma}$
and $J_{f}=J_{f\circ\sigma}$ on $X_{f}$, hence $\left(LJ_{f}\right)\left(s,t\right)=J_{f}\left(s,t\right)$
for all $s,t\in X_{f}$. 
\end{proof}

\begin{cor}
\label{cor:7-4}Let $f\equiv1$ on $\Omega$. Then $X_{1}=X_{2}$
and 
\begin{equation}
J_{1}\left(s,t\right)=K_{\infty}\left(s,t\right)\qquad\left(s,t\in X_{2}\right).\label{eq:7-6}
\end{equation}
In particular, $J_{1}$ is $L$-invariant on $X_{2}$.
\end{cor}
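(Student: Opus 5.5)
The plan is to verify the two assertions of \prettyref{cor:7-4} separately and then invoke \prettyref{thm:7-3} for invariance.

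\emph{Domain.} For $f\equiv1$ the defining condition in \prettyref{eq:7-1} reads $\int_{\Omega}h\left(s;\omega\right)d\mu\left(\omega\right)<\infty$ for $s\in X_{2}$. By \prettyref{eq:h-mean}, which comes from \prettyref{thm:5-5}, this integral equals $u_{\infty}\left(s\right)$. Since $X_{2}\subset X_{\mathrm{fin}}$, it is finite. Hence $X_{1}=X_{2}$. No further estimate is needed here, because $h\left(s;\cdot\right)\ge0$ is integrable: it is the $L^{2}$ (hence $L^{1}$) limit of the diagonal martingale.

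\emph{Identification of the kernel.} For $s,t\in X_{2}$, \prettyref{lem:7-1} shows that the integral defining $J_{1}\left(s,t\right)$ converges absolutely. By definition, $J_{1}\left(s,t\right)=\int_{\Omega}M_{\infty}\left(s,t;\omega\right)d\mu\left(\omega\right)=\mathbb{E}\left[M_{\infty}\left(s,t;\cdot\right)\right]$. By \prettyref{eq:5-8} this equals $K_{\infty}\left(s,t\right)$, which gives \prettyref{eq:7-6}.

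\emph{Invariance.} The constant weight satisfies $f\circ\sigma=f$ everywhere. So \prettyref{thm:7-3} applies with $X_{f\circ\sigma}=X_{f}=X_{2}$ and gives $LJ_{1}=J_{1}$ on $X_{2}$. One point deserves a check: for $LJ_{1}$ to make sense, the points $\varphi_{i}\left(s\right)$ must lie in $X_{2}$. This is \prettyref{lem:5-2-1}, and it is also built into the proof of \prettyref{thm:7-3}. As a consistency check, the same conclusion follows from \prettyref{eq:7-6} together with $LK_{\infty}=K_{\infty}$ from \prettyref{thm:3-4}, since $X_{2}$ is $\varphi_{i}$-stable.

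I do not expect a real obstacle. The only subtle point is making sure that the identity $\mathbb{E}[M_{\infty}]=K_{\infty}$ is available for off-diagonal pairs as well as diagonal ones. It is: it uses $L^{2}$ convergence, hence $L^{1}$ convergence, of the martingale from \prettyref{thm:5-5}, which holds for all $s,t\in X_{2}$.
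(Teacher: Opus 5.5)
Your proposal is correct and matches the paper's proof step for step: \eqref{eq:h-mean} gives $X_{1}=X_{2}$ (with $X_{1}\subset X_{2}$ already built into the definition \eqref{eq:7-1}), \eqref{eq:5-8} gives $J_{1}=K_{\infty}$ on $X_{2}$, and Theorem~\ref{thm:7-3} with $1\circ\sigma=1$ gives $L$-invariance. Your alternative check of invariance, via $LK_{\infty}=K_{\infty}$ and the $\varphi_{i}$-stability of $X_{2}$ from Lemma~\ref{lem:5-2-1}, is also valid.
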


\begin{proof}
If $f\equiv1$, then by \prettyref{eq:h-mean} we have 
\[
\int_{\Omega}h\left(s;\omega\right)d\mu\left(\omega\right)=u_{\infty}\left(s\right)<\infty\qquad\left(s\in X_{2}\right),
\]
so $X_{2}\subset X_{1}$. Conversely, $X_{1}\subset X_{2}$ is immediate
from the definition since $X_{f}\subset X_{2}$ for every $f$. For
$s,t\in X_{2}$, the identity \prettyref{eq:7-6} is \prettyref{eq:5-8}.
The final statement follows from \prettyref{thm:7-3} and the fact
that $1\circ\sigma=1$. 
\end{proof}

We include the basic ordering relation with respect to the weight.
\begin{prop}[weight monotonicity]
\label{prop:7-5} Fix measurable $f,g:\Omega\to\left[0,\infty\right]$
with $f\le g$ $\mu$-almost surely. Then $X_{g}\subset X_{f}$ and
\begin{equation}
J_{f}\le J_{g}\label{eq:7-5}
\end{equation}
in the Loewner order on $X_{g}$. Equivalently, for every $r\ge1$
and every $s_{1},\dots,s_{r}\in X_{g}$, 
\[
\left[J_{f}\left(s_{i},s_{j}\right)\right]^{r}_{i,j=1}\le\left[J_{g}\left(s_{i},s_{j}\right)\right]^{r}_{i,j=1}
\]
in the positive semidefinite cone. In particular, 
\[
J_{f}\left(s,s\right)\le J_{g}\left(s,s\right)\qquad\left(s\in X_{g}\right).
\]
\end{prop}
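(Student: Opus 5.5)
The argument uses only the definitions \prettyref{eq:7-1}--\prettyref{eq:7-2}, the almost sure positivity of the boundary Gram matrices in \prettyref{thm:5-6}, and the integrability estimates of \prettyref{lem:7-1}. The key observation is that $J_{g}-J_{f}$ is itself a weighted boundary kernel, namely $J_{g-f}$, where $g-f\ge0$ almost surely.

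First, the inclusion of domains. Since $h\left(s;\omega\right)\ge0$ and $0\le f\le g$ almost surely,
\[
\int_{\Omega}f\left(\omega\right)h\left(s;\omega\right)d\mu\left(\omega\right)\le\int_{\Omega}g\left(\omega\right)h\left(s;\omega\right)d\mu\left(\omega\right)<\infty
\]
for every $s\in X_{g}$. Hence $X_{g}\subset X_{f}$, and both $J_{f}$ and $J_{g}$ are defined and absolutely convergent on $X_{g}\times X_{g}$ by \prettyref{lem:7-1}.

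Second, define $d:=g-f$ on the full-measure set where $f\le g$ and both values are finite, and set $d:=0$ elsewhere. Care is needed here: $f$ and $g$ take values in $[0,\infty]$, so $g-f$ can be $\infty-\infty$. This is harmless. For $s\in X_{g}$ the integrability of $g\,h(s;\cdot)$ forces $g<\infty$ almost everywhere on the set $\{h(s;\cdot)>0\}$. Off that set, \prettyref{cor:6-4} gives $M_{\infty}(s,t;\omega)=0$ almost surely, so all integrands vanish there regardless of how $d$ is defined.

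With this choice, $d$ is measurable and nonnegative, and $d\le g$ gives $X_{g}\subset X_{d}$. Linearity of the absolutely convergent integrals then yields
\[
J_{g}\left(s,t\right)-J_{f}\left(s,t\right)=\int_{\Omega}d\left(\omega\right)M_{\infty}\left(s,t;\omega\right)d\mu\left(\omega\right)=J_{d}\left(s,t\right)\qquad\left(s,t\in X_{g}\right).
\]

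Third, \prettyref{thm:7-2} applied to the weight $d$ shows that $J_{d}$ is positive definite on $X_{d}\supset X_{g}$. Equivalently, one can repeat that argument directly: for $s_{1},\dots,s_{r}\in X_{g}$ and $c\in\mathbb{C}^{r}$, the quantity $c^{*}[J_{g}-J_{f}]c$ equals $\int d\,c^{*}G(\omega)c\,d\mu$, which is nonnegative because $G(\omega)$ is almost surely positive semidefinite. This gives the Loewner inequality $J_{f}\le J_{g}$ on $X_{g}$. The diagonal inequality is the case $r=1$, since $J_{f}(s,s)=\int f\,h(s;\cdot)\,d\mu$.

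The only step needing real care is the second one, handling possibly infinite weights so that the subtraction of integrals is legitimate. The rest is a direct reuse of \prettyref{thm:7-2}.
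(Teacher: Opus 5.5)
Your proposal is correct and follows the paper's argument: both write $J_{g}-J_{f}$ as the integral of the nonnegative weight $g-f$ against the almost surely positive semidefinite boundary Gram matrix from \prettyref{thm:5-6}, with integrability from the domination $|(g-f)M_{\infty}|\le g\,h^{1/2}h^{1/2}$. You package the difference as $J_{d}$ and invoke \prettyref{thm:7-2}, whereas the paper repeats the entrywise integration directly; your treatment of the $\infty-\infty$ issue is a genuine refinement that the paper glosses over, although the set where $f\le g$ and both values are finite need not have full measure---what makes your choice of $d$ harmless is the almost sure vanishing of $M_{\infty}(s,t;\cdot)$ on $\{g=\infty\}$, as you correctly observe.
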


\begin{proof}
If $f\le g$ and $\int_{\Omega}g\left(\omega\right)h\left(s;\omega\right)\,d\mu\left(\omega\right)<\infty$,
then $\int_{\Omega}f\left(\omega\right)h\left(s;\omega\right)\,d\mu\left(\omega\right)<\infty$,
hence $X_{g}\subset X_{f}$.

Fix $r\ge1$ and $s_{1},\dots,s_{r}\in X_{g}$. By \prettyref{thm:5-6}
there exists a $\mu$-null set $N\left(s_{1},\dots,s_{r}\right)$
such that for every $\omega\in\Omega\setminus N\left(s_{1},\dots,s_{r}\right)$
the matrix 
\[
G\left(\omega\right):=\left[M_{\infty}\left(s_{i},s_{j};\omega\right)\right]^{r}_{i,j=1}
\]
is positive semidefinite. Since $g-f\ge0$ $\mu$-almost surely, the
matrix 
\[
\left(g\left(\omega\right)-f\left(\omega\right)\right)G\left(\omega\right)
\]
is positive semidefinite for $\mu$-almost every $\omega$.

We claim that each entry of $\left(g-f\right)\left(\omega\right)G\left(\omega\right)$
is integrable. Fix $i,j$. By \prettyref{eq:6-8}, outside a $\mu$-null
set, 
\begin{align*}
\left|\left(g\left(\omega\right)-f\left(\omega\right)\right)M_{\infty}\left(s_{i},s_{j};\omega\right)\right| & \le g\left(\omega\right)\left|M_{\infty}\left(s_{i},s_{j};\omega\right)\right|\\
 & \le g\left(\omega\right)h\left(s_{i};\omega\right)^{1/2}h\left(s_{j};\omega\right)^{1/2}.
\end{align*}
Since $s_{i},s_{j}\in X_{g}$, Cauchy-Schwarz gives 
\begin{multline*}
\int_{\Omega}g\left(\omega\right)h\left(s_{i};\omega\right)^{1/2}h\left(s_{j};\omega\right)^{1/2}d\mu\left(\omega\right)\\
\le\left(\int_{\Omega}g\left(\omega\right)h\left(s_{i};\omega\right)\,d\mu\left(\omega\right)\right)^{1/2}\left(\int_{\Omega}g\left(\omega\right)h\left(s_{j};\omega\right)\,d\mu\left(\omega\right)\right)^{1/2}<\infty.
\end{multline*}
Thus each entry is integrable, and integrating entrywise yields 
\[
\left[J_{g}\left(s_{i},s_{j}\right)-J_{f}\left(s_{i},s_{j}\right)\right]^{r}_{i,j=1}=\int_{\Omega}\left(g\left(\omega\right)-f\left(\omega\right)\right)G\left(\omega\right)\,d\mu\left(\omega\right),
\]
which is positive semidefinite. This is  the kernel order inequality
\prettyref{eq:7-5}. The diagonal inequality is the special case $r=1$. 
\end{proof}

The construction $f\mapsto J_{f}$ gives a cone of positive definite
kernels on $X_{f}$, and \prettyref{thm:7-3} identifies $L$-invariance
of $J_{f}$ with $\sigma$-invariance of the weight $f$. Moreover,
weight monotonicity \prettyref{prop:7-5} allows one to compare these
kernels in the Loewner order by comparing the underlying weights.

We next use the weight order from \prettyref{prop:7-5} to get a canonical
$L$-superharmonic majorant of the forward orbit $\left\{ L^{n}J_{f}\right\} _{n\ge0}$
by taking the shift-supremum of the weight.
\begin{thm}[canonical $L$-superharmonic majorant]
\label{thm:7-6} Fix a measurable function $f:\Omega\to\left[0,\infty\right]$,
and define 
\[
f^{*}\left(\omega\right):=\sup_{n\ge0}f\left(\sigma^{n}\left(\omega\right)\right)\qquad\left(\omega\in\Omega\right).
\]
Then $f^{*}:\Omega\to\left[0,\infty\right]$ is measurable. Moreover,
for every $n\ge0$ one has 
\[
X_{f^{*}}\subset X_{f\circ\sigma^{n}}
\]
and the kernel inequality 
\begin{equation}
\left(L^{n}J_{f}\right)\le J_{f^{*}}\label{eq:7-7}
\end{equation}
holds in the Loewner order on $X_{f^{*}}$. In addition, $J_{f^{*}}$
is $L$-superharmonic in the sense that 
\begin{equation}
\left(LJ_{f^{*}}\right)\le J_{f^{*}}\label{eq:7-8}
\end{equation}
in the Loewner order on $X_{f^{*}}$. 
\end{thm}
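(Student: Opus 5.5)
The plan is to reduce everything to \prettyref{thm:7-3} and \prettyref{prop:7-5}, using pointwise inequalities between weights.

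\emph{Measurability.} Each $f\circ\sigma^{n}$ is measurable, since $\sigma$ is measurable for the product $\sigma$-algebra. A countable supremum of measurable $[0,\infty]$-valued functions is measurable, so $f^{*}$ is measurable.

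\emph{The domain inclusions and \prettyref{eq:7-7}.} By definition $f\circ\sigma^{n}\le f^{*}$ pointwise for every $n\ge0$. \prettyref{prop:7-5} then gives $X_{f^{*}}\subset X_{f\circ\sigma^{n}}$ and $J_{f\circ\sigma^{n}}\le J_{f^{*}}$ in the Loewner order on $X_{f^{*}}$. It remains to identify $L^{n}J_{f}$ with $J_{f\circ\sigma^{n}}$ on $X_{f\circ\sigma^{n}}$. I will prove this by induction on $n$, applying \prettyref{thm:7-3} to the weight $g_{n-1}:=f\circ\sigma^{n-1}$, so that $g_{n-1}\circ\sigma=f\circ\sigma^{n}$. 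This gives $LJ_{g_{n-1}}=J_{f\circ\sigma^{n}}$ on $X_{f\circ\sigma^{n}}$.

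The inductive step needs $L^{n}J_{f}(s,t)=L(L^{n-1}J_{f})(s,t)$ to be evaluated at the points $\varphi_{i}(s),\varphi_{i}(t)$, and for this those points must lie in $X_{f\circ\sigma^{n-1}}$, where the induction hypothesis applies. This is exactly the first step in the proof of \prettyref{thm:7-3} with $g_{n-1}$ in place of $f$: the diagonal cocycle shows that $s\in X_{g_{n-1}\circ\sigma}$ implies $\varphi_{i}(s)\in X_{g_{n-1}}$. More generally, iterating this gives $\varphi_{w}(s)\in X_{f\circ\sigma^{n-|w|}}$ for every word $w$ with $|w|\le n$. Hence $L^{n}J_{f}$ is well defined on $X_{f\circ\sigma^{n}}$ by the finite word expansion, and the induction closes.

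\emph{Superharmonicity \prettyref{eq:7-8}.} Observe that
\[
f^{*}\circ\sigma(\omega)=\sup_{n\ge1}f(\sigma^{n}(\omega))\le f^{*}(\omega).
\]
By \prettyref{prop:7-5}, $X_{f^{*}}\subset X_{f^{*}\circ\sigma}$ and $J_{f^{*}\circ\sigma}\le J_{f^{*}}$ on $X_{f^{*}}$. By \prettyref{thm:7-3}, $LJ_{f^{*}}=J_{f^{*}\circ\sigma}$ on $X_{f^{*}\circ\sigma}$, and this set contains $X_{f^{*}}$. Combining the two gives \prettyref{eq:7-8}.

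The main obstacle is the bookkeeping that makes $L^{n}J_{f}$ meaningful on $X_{f^{*}}$. The inclusions $\varphi_{w}(s)\in X_{f\circ\sigma^{n-|w|}}$ must be tracked carefully, together with the null sets used in the cocycle identity. Everything else is a direct citation of the two earlier results.
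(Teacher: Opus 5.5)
Your proposal is correct and follows essentially the same route as the paper. Both arguments get measurability from a countable supremum, apply Proposition~\ref{prop:7-5} to $f\circ\sigma^{n}\le f^{*}$ and to $f^{*}\circ\sigma\le f^{*}$, and iterate Theorem~\ref{thm:7-3} to identify $L^{n}J_{f}$ with $J_{f\circ\sigma^{n}}$ on $X_{f\circ\sigma^{n}}$. Your explicit tracking of $\varphi_{w}(s)\in X_{f\circ\sigma^{n-|w|}}$, via the first step of the proof of Theorem~\ref{thm:7-3}, usefully spells out the domain bookkeeping that the paper compresses into ``by induction on $k$.''
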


\begin{proof}
Measurability of $f^{*}$ follows because $f^{*}$ is the pointwise
supremum of the countable family of measurable functions $\omega\mapsto f\left(\sigma^{n}\left(\omega\right)\right)$.

Fix $n\ge0$. Since $f\circ\sigma^{n}\le f^{*}$ pointwise, \prettyref{prop:7-5}
applied with $f\circ\sigma^{n}$ in place of $f$ and $f^{*}$ in
place of $g$ gives $X_{f^{*}}\subset X_{f\circ\sigma^{n}}$ and 
\begin{equation}
J_{f\circ\sigma^{n}}\le J_{f^{*}}\;\text{on }X_{f^{*}}.\label{eq:7-9}
\end{equation}

Next we identify $L^{n}J_{f}$ with $J_{f\circ\sigma^{n}}$ on $X_{f\circ\sigma^{n}}$.
For $n=0$ this is trivial. For $n\ge1$, iterate \prettyref{thm:7-3}:
for every $k\ge0$ and all $s,t\in X_{f\circ\sigma^{k+1}}$, 
\[
\left(LJ_{f\circ\sigma^{k}}\right)\left(s,t\right)=J_{\left(f\circ\sigma^{k}\right)\circ\sigma}\left(s,t\right)=J_{f\circ\sigma^{k+1}}\left(s,t\right).
\]
By induction on $k$ this yields 
\begin{equation}
\left(L^{n}J_{f}\right)\left(s,t\right)=J_{f\circ\sigma^{n}}\left(s,t\right)\qquad\left(s,t\in X_{f\circ\sigma^{n}}\right).\label{eq:7-10}
\end{equation}
In particular, since $X_{f^{*}}\subset X_{f\circ\sigma^{n}}$, the
identity \prettyref{eq:7-10} holds for all $s,t\in X_{f^{*}}$.

Combining \prettyref{eq:7-10} with \prettyref{eq:7-9} gives the
claimed majorization \prettyref{eq:7-7} on $X_{f^{*}}$.

Finally, we prove \prettyref{eq:7-8}. By definition, 
\[
\left(f^{*}\circ\sigma\right)\left(\omega\right)=\sup_{n\ge0}f\left(\sigma^{n+1}\left(\omega\right)\right)\le\sup_{n\ge0}f\left(\sigma^{n}\left(\omega\right)\right)=f^{*}\left(\omega\right),
\]
so $f^{*}\circ\sigma\le f^{*}$ pointwise. \prettyref{prop:7-5} then
gives $X_{f^{*}}\subset X_{f^{*}\circ\sigma}$ and 
\begin{equation}
J_{f^{*}\circ\sigma}\le J_{f^{*}}\;\text{on }X_{f^{*}}.\label{eq:7-11}
\end{equation}
Since $X_{f^{*}}\subset X_{f^{*}\circ\sigma}$, \prettyref{thm:7-3}
applied to the weight $f^{*}$ yields 
\[
\left(LJ_{f^{*}}\right)\left(s,t\right)=J_{f^{*}\circ\sigma}\left(s,t\right)\qquad\left(s,t\in X_{f^{*}}\right).
\]
Together with \prettyref{eq:7-11}, this gives $\left(LJ_{f^{*}}\right)\le J_{f^{*}}$
on $X_{f^{*}}$, which is \prettyref{eq:7-8}. 
\end{proof}

\bibliographystyle{amsalpha}
\bibliography{ref}

\end{document}